\newcommand{\inner}[1]{\left\langle #1 \right\rangle}
\newcommand{\norm}[1]{\left\Vert #1\right\Vert}
\newcommand{\bb}[1]{\mathbb{#1}}
\newcommand{\X}{{ \ca{X} }}
\newcommand{\ca}[1]{\mathcal{#1}}
\newcommand{\tr}[0]{\mathrm{tr}}
\newcommand{\Diag}[0]{\mathrm{Diag}}
\newcommand{\M}[0]{\mathcal{M}}
\newcommand{\tp}{^\top}
\newcommand{\A}{\ca{A}}
\newcommand{\Tx}{{\ca{T}_x}}
\newcommand{\Nx}{\ca{N}_x}
\newcommand{\xk}{{x_{k} }}
\newcommand{\Jc}{{{J}_c}}
\newcommand{\Ja}{{{J}_{A}}}
\newcommand{\Ju}{{{J}_u}}
\newcommand{\Jv}{{{J}_{v}}}
\newcommand{\Lf}{ {M_{x,f}} }
\newcommand{\Mu}{ {M_{x,u}} }
\newcommand{\Mv}{ {M_{x,v}} }
\newcommand{\Lsc}{ \sigma_{x,c} }
\newcommand{\Mc}{ {M_{x,c}} }
\newcommand{\Ma}{ M_{x,A} }
\newcommand{\Lc}{ {L_{x,c}} }
\newcommand{\La}{ {L_{x,A}} }
\newcommand{\Lac}{ {L_{x, b}} }
\newcommand{\Mxlambdamu}{M_{x,\lambda,\mu}}
\newcommand{\Omegax}[1]{ {\Omega_{#1}} }
\newcommand{\BOmegax}[1]{ {\overline{\Omega}_{#1}} }
\newcommand{\y}{{y}}
\newcommand{\K}{\ca{K}}
\newcommand{\KA}{\ca{K}_{A}}
\newcommand{\ic}{v}
\newcommand{\ec}{u}
\newcommand{\TK}{\ca{T}_{\K}}
\newcommand{\TKlin}{{\ca{T}_{\K}^{lin}}}
\newcommand{\TKA}{\ca{T}_{\KA}}
\newcommand{\TKAlin}{{\ca{T}_{\KA}^{lin}}}
\newcommand{\Rn}{\mathbb{R}^n}
\newcommand{\Rp}{\mathbb{R}^p}
\newcommand{\LNLP}{\ca{L}_{NLP}}
\newcommand{\LCDP}{\ca{L}_{CDP}}
\newtheorem{theo}{Theorem}[section]
\newtheorem{lem}[theo]{Lemma}
\newtheorem{prop}[theo]{Proposition}
\newtheorem{examples}[theo]{Example}
\newtheorem{cond}[theo]{Condition}
\newtheorem{coro}[theo]{Corollary}
\newtheorem{defin}[theo]{Definition}
\newtheorem{rmk}[theo]{Remark}
\newtheorem{assumpt}[theo]{Assumption}
\numberwithin{equation}{section}
\title{A Partial Exact Penalty Function Approach for Constrained Optimization}
\author{Nachuan Xiao
	\thanks{The Institute of Operations Research and Analytics, National University of Singapore, Singapore. (xnc@lsec.cc.ac.cn). The research of this author is supported by the Ministry of Education, Singapore, under its Academic Research Fund Tier 3 grant call (MOE-2019-T3-1-010).} ~
	Xin Liu\thanks{State Key Laboratory of Scientific and Engineering Computing, Academy of Mathematics and Systems Science, Chinese Academy of Sciences, and University of Chinese Academy of Sciences, China (liuxin@lsec.cc.ac.cn). Research is supported in part by the National Natural Science Foundation of China (No. 12125108, 11971466, 11991021), Key Research Program of Frontier Sciences, Chinese Academy of Sciences (No. ZDBS-LY-7022).} ~
	and Kim-Chuan Toh \thanks{Department of Mathematics, and Institute of Operations Research and Analytics, National University of Singapore, Singapore 119076 (mattohkc@nus.edu.sg). The research of this author is supported by the Ministry of Education, Singapore, under its Academic Research Fund Tier 3 grant call (MOE-2019-T3-1-010).}}
\begin{document}
	\maketitle
	
	\begin{abstract}
		
		In this paper, we focus on a class of constrained nonlinear optimization problems (\ref{Prob_Ori}), where some of its equality constraints define a closed embedded submanifold $\M$ in $\mathbb{R}^n$. Although \ref{Prob_Ori} can be solved directly by various existing approaches for constrained optimization in Euclidean space, these approaches usually fail to recognize the manifold structure of $\M$. 
		To achieve better efficiency by utilizing the manifold structure of $\M$ in directly applying these existing optimization approaches,  we propose a partial penalty function approach for \ref{Prob_Ori}. In our proposed penalty function approach, we transform \ref{Prob_Ori}  into  the corresponding constraint dissolving problem (CDP) in the Euclidean space, where the constraints that define $\M$ are eliminated through exact penalization. We establish the relationships on the constraint qualifications between \ref{Prob_Ori} and CDP, and prove that \ref{Prob_Ori} and CDP have the same stationary points and KKT points in a neighborhood of the feasible region under mild conditions. 
		Therefore, various existing optimization approaches developed for constrained optimization in the Euclidean space can be directly applied to solve \ref{Prob_Ori} through CDP. Preliminary numerical experiments demonstrate that by dissolving the constraints that define $\M$, CDP gains superior computational efficiency when compared to directly applying existing optimization approaches to solve \ref{Prob_Ori}, especially in high dimensional scenarios.
		
	\end{abstract}

	\section{Introduction}
	In this paper, we consider the following constrained optimization problem,
	\begin{equation}
		\label{Prob_nlp}
		\begin{aligned}
			\min_{x \in \bb{R}^n} \quad & f(x)\\
			\text{s.t.} \quad & s(x) = 0,~ \ic(x) \leq 0, 
		\end{aligned}
	\end{equation}
	where a subset of the equality constraints $s(x) = 0$ satisfies the {\it linear independence constraint qualification} (LICQ). Therefore, we can reshape \eqref{Prob_nlp} as the following constrained nonlinear optimization problem (\ref{Prob_Ori}),
	\begin{equation}
		\label{Prob_Ori}
		\tag{NLP}
		\begin{aligned}
			\min_{x \in \bb{R}^n} \quad & f(x)\\
			\text{s.t.} \quad & x \in \M := \{x \in \Rn: c(x) =0\},\\
			& \ec_i(x) = 0, \quad \forall i \in [N_E] :=\{ 1,..., N_E \},\\
			& \ic_j(x) \leq 0, \quad \forall j \in [N_I]:=\{ 1,..., N_I \}.  
		\end{aligned}
	\end{equation}
	where $c(x) = 0$ refers to the subset of the equality constraints $s(x) = 0$ in \eqref{Prob_nlp} that satisfies LICQ, and $u(x) = 0$ refers to all the remaining equality constraints. Throughout this paper,  we make the following assumptions on \ref{Prob_Ori}, 
	\begin{assumpt}{\bf Default assumptions}
		\label{Assumption_1}
		
		\begin{enumerate} 
			\item $f: \bb{R}^n \to \bb{R}$  is locally Lipschitz continuous and Clarke regular in $\bb{R}^{n}$. The definition of Clarke regular functions can be found in \cite[Definition 2.3.4]{clarke1990optimization} or Definition \ref{Defin_Clarke_regular} in this paper.
			\item $c: \bb{R}^n \to \bb{R}^p$ is a locally Lipschitz smooth mapping, i.e., the transposed Jacobian of $c$, denoted as $\Jc(x)$, is locally Lipschitz continuous over $\bb{R}^n$. 
			\item The constraints $c(x) = 0$ satisfies LICQ over $\M$. That is, $\Jc(x)$ is full-rank for any $x \in \M$. 
			\item $u: \bb{R}^{n}\to \bb{R}^{N_E}$ and $v: \bb{R}^{n}\to \bb{R}^{N_I}$ are locally Lipschitz smooth mappings.
			\item The feasible region of \ref{Prob_Ori}, denoted as $\K:= \{ x\in \M: u(x) = 0, ~ v(x) \leq 0 \}$, is nonempty.
		\end{enumerate}
	\end{assumpt}
	 
	Optimization problems that take the form of \ref{Prob_Ori} have wide applications in data science and many other related areas, including computing the geometric mean over a Riemannian manifold \cite{karcher1977riemannian,moakher2002means,berger2003panoramic,afsari2011riemannian,bergmann2019intrinsic}, maximum balanced cut problems \cite{liu2019simple}, quadratic assignment problems \cite{burkard1997qaplib}, clustering problems \cite{wang2021clustering}, etc. In the following, we briefly present  some examples of \ref{Prob_Ori}.

	\begin{examples}[Riemannian center of mass problem]
		\label{Example_Rie_mass}
		Riemannian center of mass problem has important applications in pure mathematics \cite{karcher1977riemannian,berger2003panoramic,afsari2011riemannian}, data science \cite{moakher2002means,bergmann2019intrinsic}, etc. 
		Given the Riemannian manifold $\M := \{x \in \Rn: c(x) = 0\}$ and the samples $\{s_i: i\in [N]\}$ from $\M$, the Riemannian center of mass can be expressed as
		\begin{equation}
			\begin{aligned}
				\min_{x \in \bb{R}^n}\quad &\frac{1}{N} \sum_{i = 1}^N \norm{x - s_i}^2\\
				\text{s.t.} \quad & c(x) = 0, ~\norm{x - s^*}^2 \leq r,
			\end{aligned}
		\end{equation}
		where $s^*$ is a prefixed point on $\M$ and $r >0$ is a constant.

	\end{examples}
	
	\begin{examples}[Minimum balanced cut for graph bisection]
		\label{Example_Graph_cut}
		Given an undirected graph, the minimum cut problem aims to separate the vertices into two clusters that have the same size, while enforcing that the number of edges between the two clusters is as small as possible. As described in \cite{liu2019simple}, such a problem can be relaxed into the following Riemannian optimization problem with additional linear constraints,
		\begin{equation}
			\label{Example_BC}
			\begin{aligned}
				\min_{X \in \bb{R}^{m\times q}} \quad & -\frac{1}{4} \tr\left( X\tp LX \right)\\
				\text{s.t.}\quad & \mathrm{Diag}(XX\tp ) = I_m,\\
				& X\tp e = 0,
			\end{aligned}
		\end{equation}
		where $L$ is the Laplacian matrix of the graph, $e \in \bb{R}^m$ is the vector of $1$'s, and the constraints $\mathrm{Diag}(XX\tp ) = I_m$ defines an Oblique manifold in $\bb{R}^{m\times q}$.
		
%		 Therefore, \eqref{Example_BC} can be regarded as a Riemannian optimization problem with $q$ additional equality constraints. 

	\end{examples}
	
	 As mentioned in \cite{Absil2009optimization}, the equality constraints $c(x) = 0$ in \ref{Prob_Ori} define an embedded submanifold $\M$ in $\Rn$. Therefore, \ref{Prob_Ori} can be regarded as a optimization problem in $\Rn$ with equality constraints $[c(x); u(x)] = 0$ and inequality constraints $v(x) \leq 0$, or be regarded as a optimization problem over the embedded submanifold $\M$ with additional constraints $u(x) = 0$ and $v(x) \leq 0$. 
	
	According to how the constraints $c(x) = 0$ are treated, there are two parallel categories of optimization approaches for solving \ref{Prob_Ori}. One category regards \ref{Prob_Ori} as a standard nonlinear constrained optimization problem in $\Rn$, where the constraints $c(x) = 0$ are nested as an additional set of equality constraints, without any special treatment of its underlying manifold structures. Then a great number of {\it Euclidean optimization approaches} (i.e. approaches designed for constrained optimization problems in $\bb{R}^n$) can  directly be applied for solving \ref{Prob_Ori}. These approaches include augmented Lagrangian method (ALM) \cite{hestenes1969multiplier,powell1969method}, sequential quadratic programming method (SQP) \cite{curtis2017bfgs}, interior-point method \cite{bian2015complexity}, filter method \cite{fletcher2002nonlinear}, etc.  Furthermore, benefited from the rich expertise gained over the past decades for solving constrained optimization in $\bb{R}^n$, various efficient Euclidean solvers (i.e. solvers developed for constrained optimization in $\bb{R}^n$), including  ALGENCAN \cite{andreani2008augmented,andreani2008augmented2} and Ipopt \cite{wachter2006implementation}, are developed and widely applied in solving \ref{Prob_Ori}.

	However, as the constraints $c(x) = 0$ define an embedded submanifold in $\Rn$, another category of optimization approaches \cite{liu2019simple,bergmann2019intrinsic} regard $u(x) = 0$ and $v(x) \leq 0$ as additional constraints over the embedded submanifold $\M$, i.e.,
	\begin{equation}
		\label{Prob_Rie}
		\begin{aligned}
			\min_{x \in \M} \quad & f(x)\\
			\text{s.t.} \quad & \ec(x) = 0,~ \ic(x) \leq 0. 
		\end{aligned}
	\end{equation}
	Following the well-recognized framework presented in \cite{Absil2009optimization}, existing  {\it Riemannian optimization approaches} (i.e. optimization approaches designed for optimization over Riemannian manifolds) are developed by extending efficient Euclidean optimization approaches from $\bb{R}^n$ to $\M$, including Riemannian SQP method \cite{bai2018analysis,schiela2020sqp}, Riemannian augmented Lagrangian method \cite{liu2019simple,tang2021solving}, Riemannian nonsmooth penalty method \cite{liu2019simple}, Riemannian Frank-Wolfe method \cite{weber2019nonconvex}, etc. 
	Compared to directly applying Euclidean optimization approaches to solve \ref{Prob_Ori}, these Riemannian optimization approaches are potentially more efficient by identifying $c(x) = 0$ as a Riemannian manifold  and exploit the geometrical structures of $\M$ \cite{liu2019simple}. 
	
	Although these Riemannian optimization approaches are powerful alternatives for solving \eqref{Prob_Rie}, developing these Riemannian approaches relies on geometrical materials for the underlying manifold, which include Riemannian derivatives, retractions and their inverse, vector transports, etc  \cite{Absil2009optimization,boumal2020introduction}. 
	Determining those geometrical materials is usually challenging, see \cite{Absil2009optimization,boumal2020introduction,son2021symplectic} for instances. As a result, those geometrical materials are well understood only for several well-known Riemannian manifolds, which limits the applications of these existing Riemannian optimization approaches only to several well-known manifolds.  
	
	Moreover, even if those geometrical materials for $\M$ are available,  developing a Riemannian optimization approach from an Euclidean optimization approach by the framework described in \cite{Absil2009optimization} is still not an easy task. To transfer an Euclidean optimization approach to its Riemannian versions, one need to replace the derivatives of $f$, $u$ and $v$ by their Riemannian derivatives, introduce retractions to keep the iterates in $\M$, and employ vector transports to move vectors among different tangent spaces of $\M$.   As a result,  existing Riemannian optimization approaches are  limited, especially when compared to the increasing number of Euclidean optimization approaches. Furthermore, in contrast to various available efficient {\it Euclidean solvers} (i.e., solvers developed based on Euclidean optimization approaches), there is no publicly released Riemannian solvers for \ref{Prob_Ori}.  Considering the great effort needed in developing Riemannian optimization approaches and their corresponding solvers, it is challenging for these approaches to follow the progress and advances in nonconvex constrained optimization in the Euclidean space. 
	
	In brief, directly solving \ref{Prob_Ori} by Euclidean optimization approaches enjoys the great convenience from various available highly efficient algorithms and solvers. On the other hand, solving \ref{Prob_Ori} through Riemannian optimization approaches can achieve better efficiency by exploiting the structure of $\M$, while the involved geometrical materials may impose great difficulties in developing efficient Riemannian optimization solvers. Therefore, it is natural to consider how we can combine the advantages from both the Euclidean optimization approaches and Riemannian optimization approaches. More precisely, we are motivated to ask the following question:
	\begin{quote}
		\it 
		Can we directly apply Euclidean optimization solvers for \ref{Prob_Ori} while achieving better efficiency through exploiting the structure of the embedded submanifold $\M$ defined by constraints $c(x) = 0$? 
	\end{quote}

	Very recently, \cite{xiao2022dissolving} shows that the following smooth optimization problem 
	\begin{equation}
		\label{Eq_unconstrained_ori}
		\min_{x \in \Rn}\quad f(x), \qquad \text{s. t. }~  c(x) = 0,
	\end{equation}
	is equivalent to the unconstrained minimization of the following {\it constraint dissolving} function, 
	\begin{equation}
		\label{Eq_unconstrained_cdf}
		f(\A(x)) + \frac{\beta}{2} \norm{c(x)}^2. 
	\end{equation}
	Here $\beta \geq 0$ is the penalty parameter, and the constraint dissolving mapping $\A$ should satisfy the following assumptions.
	\begin{assumpt}{\bf Blanket assumptions on $\A$}
		\label{Assumption_2}	
		\begin{enumerate}
			\item $\A:\bb{R}^n \to \bb{R}^n$ is locally Lipschitz smooth in $\bb{R}^n$;
			\item $\A(x) = x$ holds for any $x \in \M$;
			\item The Jacobian of $c(\A(x))$ equals to $0$ for any $x \in \M$. That is, $\Ja(x) \Jc(x) = 0$  holds for any $x \in \M$, where $\Ja(x) \in \bb{R}^{n\times n}$ denotes the transposed Jacobian of $\A$ at $x$.
		\end{enumerate}
	\end{assumpt}
	The construction of the constraint dissolving mapping $\A$ for various important manifolds can be found in \cite[Section 4]{xiao2022dissolving}. 
	As proven in \cite{xiao2022dissolving}, \eqref{Eq_unconstrained_ori} and \eqref{Eq_unconstrained_cdf} have the same stationary points in a neighborhood of $\M$ with appropriately selected penalty parameter $\beta$. Moreover, \cite{xiao2022dissolving} shows that the construction of $\A$ can totally avoid the needs for any geometrical materials of $\M$. Therefore, various unconstrained optimization approaches can be directly applied to solve \eqref{Eq_unconstrained_ori} through \eqref{Eq_unconstrained_cdf}.

	Our motivation comes from the constraint dissolving approaches for Riemannian optimization \cite{xiao2022dissolving}. 
	The formulation of the constraint dissolving function in \eqref{Eq_unconstrained_cdf} motivates us to consider replacing $f$, $u$ and $v$ in \ref{Prob_Ori} by their corresponding constraint dissolving functions, and remove the constraints $c(x) = 0$ from \ref{Prob_Ori} through exact penalization. Thus we arrive at the following constraint dissolving problem (\ref{Prob_Pen}):
	\begin{equation}
		\tag{CDP}
		\label{Prob_Pen}
		\begin{aligned}
			\min_{x \in \bb{R}^n}\quad &h(x) := f(\A(x)) + \frac{\beta}{2} \norm{c(x)}^2\\
			\text{s.t.}\quad & \tilde{\ec}_i(x) := \ec_i(\A(x)) + \frac{\tau_i}{2} \norm{c(x)}^2 = 0, \quad i \in [N_E],\\
			&\tilde{\ic}_j(x) := \ic_j(\A(x)) + \frac{\gamma_j}{2} \norm{c(x)}^2 \leq 0, \quad j \in [N_I], 
		\end{aligned}
	\end{equation} 
	where $\beta$, $\{\tau_i\}$ and $\{\gamma_j\}$ are all non-negative penalty parameters. 
	As the construction of the constraint dissolving mapping $\A$ in \ref{Prob_Pen}
	is independent of the geometrical materials of $\M$ \cite{xiao2022dissolving},  we can directly apply various existing Euclidean solvers to  \ref{Prob_Pen},  without any computation of  geometrical materials such as retractions, vector transports, Riemannian differentials, etc.
	Furthermore, compared to \ref{Prob_Ori}, \ref{Prob_Pen} has eliminated the constraints $c(x) = 0$. Hence  solving the resulting problem \ref{Prob_Pen} by Euclidean solvers can potentially become more efficient.

	However, existing constraint dissolving approaches \cite{xiao2022dissolving,hu2022constraint,hu2022improved} are only developed for minimizing the  objective function $f$ over $\M$ without any additional constraints. How to establish the equivalence between \ref{Prob_Ori} and \ref{Prob_Pen}, especially in the presence of nonsmooth objective function and additional constraints, remains to be worked out. More importantly, as \ref{Prob_Ori} has different constraints as \ref{Prob_Pen}, the relationships between \ref{Prob_Ori} and \ref{Prob_Pen} on constraint qualifications,  stationary points, and KKT points, should be carefully analyzed.

	\paragraph{Contribution}
	In this paper, we propose the constraint dissolving approach for \ref{Prob_Ori} by  transforming it into \ref{Prob_Pen}, where the constraints $c(x) = 0$ is eliminated. We first prove the equivalence of the constraint qualifications between \ref{Prob_Ori} and \ref{Prob_Pen}, in the sense that a broad class of constraint qualifications for \ref{Prob_Ori} imply that all KKT points of \ref{Prob_Pen} are its first-order stationary points. 
	Moreover, we prove the equivalence between \ref{Prob_Ori} and \ref{Prob_Pen} in the sense that they have the same stationary points and Karush–Kuhn–Tucker (KKT) points over the feasible region under mild conditions. Additionally, we prove sharper results on the equivalence for the special case where $N_E = 0$ in \ref{Prob_Ori}. These results on the equivalence between \ref{Prob_Ori} and \ref{Prob_Pen} demonstrate that transforming \ref{Prob_Ori} into \ref{Prob_Pen} preserves the validity of constraint qualifications, while keeping the stationary points and KKT points of \ref{Prob_Ori} unchanged. Therefore, we can solve \ref{Prob_Ori} by directly employing various existing Euclidean approaches to solve \ref{Prob_Pen}.

	We perform numerical experiments on Riemannian center of mass problems and minimum balanced cut problems to demonstrate the strengths  of our proposed constraint dissolving approach. As a baseline, we compare our proposed constraint dissolving approach to the above-mentioned Euclidean optimization approaches by simply considering  $c(x) = 0$ as an extra set of equality constraints. Preliminary numerical experiments illustrate that applying existing Euclidean optimization approaches to \ref{Prob_Pen} can achieve better computational efficiency, especially in high dimensional cases. These numerical experiments further demonstrate that solving \ref{Prob_Ori} through \ref{Prob_Pen} can enjoy similar advantages offered by existing Riemannian optimization approaches, while avoiding the difficulties in determining the geometrical materials of the underlying manifolds and designing Riemannian optimization solvers.

	\paragraph{Organization}
	The outline of this paper is as follows. In Section 2, we fix the notations, definitions, and constants that are necessary for the proofs in this paper. We establish the theoretical properties of \ref{Prob_Pen} and discuss the special case with only inequality constraints. The proofs for the theoretical properties of \ref{Prob_Pen} are presented in the appendix. In Section 4, we present several  illustrative numerical examples to show that \ref{Prob_Pen} allows the straightforward implementations of various Euclidean optimization approaches.  We conclude the paper in the last section.

	\section{Notations, definitions and constants}
	\subsection{Notations}
	Let $\mathrm{range}(A)$ be the subspace spanned by the column vectors of matrix $A$, while $\norm{\cdot}_1$ and $\norm{\cdot}$ denote the $\ell_1$-norm and $\ell_2$-norm  of vectors or operators, respectively.
	The notations $\mathrm{diag}(A)$ and $\Diag(x)$
	stand for the vector formed by the diagonal entries of matrix $A$,
	and the diagonal matrix with the entries of $x\in\bb{R}^n$ as its diagonal, respectively. 
	%$X^\dag$ refers to the pseudo-inverse of $X$.
	We denote the smallest and largest eigenvalues of $A$ by $\lambda_{\mathrm{min}}(A)$ and $\lambda_{\max}(A)$, respectively. Besides, $\sigma_{l}(A)$ refers to the $l$-th largest singular value of $A$, and $\sigma_{\min}(A)$ refers to the smallest singular value of $A$. Furthermore, for any matrix $A \in \bb{R}^{n\times p}$, we use $A^\dagger$ to denote the pseudo-inverse of $A$. That it, $A^\dagger \in \bb{R}^{p\times n}$ satisfies $AA^\dagger A = A$, $A^\dagger AA^\dagger = A^\dagger$, and both $A^{\dagger} A$ and $A A^{\dagger}$ are symmetric \cite{golub1996Matrix}. 
	
	For the submanifold $\M$, we denote  
	$\Tx$ as the tangent space of $\M$ at $x$, which can be expressed as $\Tx := \{ d \in \bb{R}^n: \Jc(x)\tp d = 0  \}$.
	Moreover, $\Nx$ denotes the normal space of $\M$ at $x$, i.e., $\Nx := \{d \in \bb{R}^{n}:  d\tp u = 0, ~  \forall u \in \Tx \}$. 
	From the definitions of $\Tx$ and $\Nx$, it holds that $\Nx = \mathrm{range}(\Jc(x))$. 
	Additionally, for any $x \in \bb{R}^n$, we define the projection to $\M$ as $\mathrm{proj}(x, \M) := \mathop{\arg\min}_{y \in \M} ~ \norm{x-y}$. 
	It is worth mentioning that for any $x \in \bb{R}^n$, the optimality condition of the above problem leads to the fact that for any $y \in \mathrm{proj}(x, \M)$, it holds that $x - y \in \mathrm{range}(\Jc(y))$.
	Furthermore, $\mathrm{dist}(x, \M)$ refers to the distance between $x$ and $\M$, i.e. $
		\mathrm{dist}(x, \M) := \mathop{\inf}_{y \in \M} ~ \norm{x-y}$.

	The transposed Jacobian of mappings $\A$ and $c$ are denoted as $\Ja(x) \in \bb{R}^{n\times n}$ and $\Jc(x) \in \bb{R}^{n\times p}$, respectively. That is, let $c_i$ and $\A_{i}$ denotes the $i$-th coordinate of the mapping $c$ and $\A$ respectively, then $\Jc$ and $\Ja$ are defined as 
	\begin{equation*}
		\Jc(x) := \left[ \nabla c_1(x),..., \nabla c_p(x) \right], \quad \text{and} \quad \Ja(x):= \left[\nabla \A_1(x), ..., \nabla \A_n(x) \right]. 
	\end{equation*}
%	\begin{equation*}
%		\Jc(x) := \left[  \begin{smallmatrix}
%			\frac{\partial c_1(x)}{\partial x_1} & \cdots & \frac{\partial c_p(x)}{\partial x_1} \\
%			\vdots & \ddots & \vdots \\
%			\frac{\partial c_1(x)}{\partial x_n} & \cdots & \frac{\partial c_p(x)}{\partial x_n} \\
%		\end{smallmatrix}\right] \in \bb{R}^{n\times p},
%		\quad    
%		\Ja(x) := \left[  \begin{smallmatrix}
%			\frac{\partial \A_1(x)}{\partial x_1} & \cdots & \frac{\partial \A_n(x)}{\partial x_1} \\
%			\vdots & \ddots & \vdots \\
%			\frac{\partial \A_1(x)}{\partial x_n} & \cdots & \frac{\partial \A_n(x)}{\partial x_n} \\
%		\end{smallmatrix}\right] \in \bb{R}^{n\times n}. 
%	\end{equation*}
	Similarly, we define $\Ju$ and $\Jv$ as the transported Jacobian for $u$ and $v$, respectively. 
	Moreover, for a given subset $\ca{G}$ of $\bb{R}^n$, we define $ \Ja(x)\ca{G} := \{ \Ja(x)d: d \in \ca{G} \}$. We set $$\A^{k}(x) := \underbrace{\A(\A(\cdots\A(x)\cdots))}_{k \text{ times}},$$
	for $k \geq 1$, and define $\A^0(x) := x$, $\A^{\infty}(x):= \lim\limits_{k \to +\infty} \A^k(x)$. Furthermore, we denote $g(x) := f(\A(x))$ in the rest of this paper. Additionally, we denote the closed ball at $x$ with radius $\rho$ as $\ca{B}_{x, \rho} := \{ y\in \Rn: \norm{y-x} \leq \rho \}$. Finally, for any positive integer $n$, we use the notation $[n]:= \{1,2,...,n\}$. We denote $\KA$ as the feasible region of \ref{Prob_Pen}, i.e. $\KA :=  \{ x \in \bb{R}^n: \tilde{\ec}(x) = 0, \tilde{\ic}(x) \leq 0 \}$, and denote $\ca{F}(x)$ as the active index set of the inequality constraints in \ref{Prob_Ori}, i.e. $\ca{F}(x) := \{ j \in [N_I]: \ic_j(x) = 0 \}$. Similarly, we denote the active index set of the inequality constraints in \ref{Prob_Pen} as $\ca{F}_A(x) := \{ j \in [N_I]: \tilde{\ic}_j(x) = 0\}$.   Note that for any $x \in \M$, we have $\ca{F}(x) = \ca{F}_A(x)$, since $\tilde{\ic}_j(x) = \ic_j(x)$ holds for any $x \in \M$ and $j \in [N_I]$. Furthermore, it is easy to verify that $\K\subseteq \KA$.

	\subsection{Preliminaries}
	\subsubsection{Subdifferential and regularity}
	\begin{defin}
		
		The generalized directional derivative of $f$ at $x \in \bb{R}^{n}$  in the direction $d$, denoted by $f^\star(x, d)$, is defined as 
		\begin{equation}
			f^\star(x, d) = \mathop{\lim\sup}_{y\to x, t \downarrow  0}~ \frac{f(y +td) - f(y)}{t}. 
		\end{equation}
		The generalized gradient or the (Clarke) subdifferential of $f$ at $x \in \bb{R}^{n\times p}$, denoted by $\partial f(x)$, is defined as 
		\begin{equation}
			\partial f(x) := \{ w \in \bb{R}^{n} : \inner{w, d} \leq f^\star (x, d), \text{ for all } d \in \bb{R}^{n} \}.
		\end{equation}
	\end{defin}
	
%	Following the definitions in \cite{yang2014optimality,hosseini2018line}, we next present the definition for Riemannian subdifferential on $\M$. 
%	\begin{defin}
%		The generalized Riemannian directional derivative of $f$ at $x \in \M$ in the direction $d \in \Tx$, denoted by $f^\diamond(x, d)$, is given by 
%		\begin{equation}
%			f^\diamond(x, d) = \mathop{\lim\sup}_{ \M \ni y, z\to x, ~t \downarrow  0}~ \frac{f\left( z \right) - f(y)}{t} \quad \text{s.t.  }  d = \mathop{\lim\sup}_{ \M \ni y, z\to x,~ t \downarrow  0} \frac{z-y}{t}. 
%		\end{equation}
%		Then the Riemannian subdifferential, denoted as $\partialRie f(X)$, is defined as 
%		\begin{equation}
%			\partialRie f(x) := \{ w \in \Tx : \inner{w, d} \leq f^\diamond (x, d)~  \forall d \in  \Tx \}.
%		\end{equation}
%	\end{defin}

	\begin{defin}
		\label{Defin_Clarke_regular}
		We say that $f$ is (Clarke) regular at $x \in \bb{R}^{n}$ if for every direction $d$, the one-sided directional derivative 
		\begin{equation}
			f'(x,d) = \lim_{t \downarrow 0} \frac{f(x + td) - f(x)}{t} 
		\end{equation}
		exists and $f'(x, d) = f^\star(x, d)$.
	\end{defin}

%	The following proposition illustrates the relationship between the Clarke subdifferential and Riemannian subdifferential of $f$ when $f$ is assumed to be Clarke regular.  
%	
%	\begin{prop}[\cite{yang2014optimality}]
%		\label{Defin_FOSP_Rie} 
%		Given $x \in \M$, the Riemannian subdifferential of $f$ at $x \in\M$ can be expressed as
%		\begin{equation}
%			\partialRie f(x) := \{w - \Jc(x)\Jc(x)^\dagger w: w \in \partial f(x)\},
%		\end{equation}
%		where $\Jc(x)^\dagger \in \bb{R}^{p\times n}$ is the pseudo-inverse of $\Jc(x)$. 
%	\end{prop}

	\subsubsection{Constraint qualifications and  optimality conditions}
	In this subsection, we present the definitions on the constraint qualifications for the constrained optimization problems \eqref{Prob_Ori} and \eqref{Prob_Pen}, respectively. 
	It is worth mentioning that both \ref{Prob_Ori} and \ref{Prob_Pen} are constrained optimization problems in $\Rn$, hence their constraint qualifications can be defined in the same manner.  Recall that $\K$ and $\KA$ denote the feasible region of \ref{Prob_Ori} and \ref{Prob_Pen}  respectively, we have the following definitions on the constraint qualifications for \eqref{Prob_Ori} and \eqref{Prob_Pen}. 

	\begin{defin}
		For any closed cone $\ca{C}$ in $\Rn$, its polar cone $\ca{C}^\circ$ is defined as
		\begin{equation*}
			\ca{C}^\circ := \{d \in \Rn: \inner{d, w} \leq 0 , ~\forall w \in \ca{C} \}. 
		\end{equation*}
	\end{defin}

	\begin{defin}
		\label{Defin_tangent_cones}
		For any $x \in \K$, the (Bouligand) tangent cone with respect to $\K$ is defined as
		\begin{equation*}
			\TK(x) := \left\{ d \in \Rn: \exists \xk \in \K \to x, ~t_k \downarrow 0, \text{ s.t.  } d = \lim_{k \to +\infty} \frac{x_k - x}{t_k} \right\}.
		\end{equation*}
		Similarly, for any $x \in \KA$,  the (Bouligand) tangent cone with respect to $\KA$ is defined as
		\begin{equation*}
				\TKA(x) := \left\{ d \in \Rn: \exists \xk \in \KA \to x, ~t_k \downarrow 0, \text{ s.t.  } d = \lim_{k \to +\infty} \frac{x_k - x}{t_k} \right\}.
		\end{equation*}
	\end{defin}
	
	\begin{defin}
		\label{Defin_linearize_cones}
		For any $x \in \K$, the linearizing cone with respect to $\K$ is defined as
		\begin{equation*}
			\TKlin(x) := \left\{ d \in \Rn: \inner{d, \nabla c_l(x)} = 0,  \inner{d, \nabla u_i(x)} = 0, \inner{d, \nabla v_j(x)} \leq 0, l\in [p], i \in [N_E], j\in \ca{F}(x) \right\}.
		\end{equation*}
		Similarly, for any $x \in \KA$, the linearizing cone with respect to $\KA$ is defined as
		\begin{equation*}
			\TKAlin(x) := \left\{ d \in \bb{R}^n: \inner{d,  \nabla \tilde{\ec}_i(x)} = 0, \inner{d,  \nabla  \tilde{\ic}_j(x)} \leq 0, i \in [N_E], j\in \ca{F}_A(x) \right\}
		\end{equation*}
	\end{defin}
From Definition \ref{Defin_linearize_cones}, it can be shown that for any $x\in\K$,  $\TK(x) \subset \TKlin(x)$ and
$$
(\TKlin(x))^\circ = \left\{
\sum_{i\in[p]} \rho_l\nabla {c}_l(x) 
+ \sum_{i\in[N_E]} \lambda_i \nabla {u}_i(x) 
+\sum_{j\in[N_I]} \mu_j \nabla {v}_j(x)\;\Big| 
\begin{array}{l}
\rho\in\mathbb{R}^p,\;
\lambda \in \mathbb{R}^{N_E}, 
\\
\mu\in\mathbb{R}^{N_I}_+, \mu_j = 0\; \forall\; j \not\in \ca{F}(x)
\end{array} \right\}.
$$
Similarly, it can also be shown that for any $x\in\KA$,
$\TKA(x) \subset \TKAlin(x)$ and
$$
(\TKAlin(x))^\circ = \Big\{ \sum_{i\in[N_E]} \lambda_i \nabla \tilde{u}_i(x) 
+\sum_{j\in[N_I]} \mu_j \nabla \tilde{v}_j(x)\mid \lambda \in \mathbb{R}^{N_E}, \mu\in\mathbb{R}^{N_I}_+, \mu_j = 0\; \forall\; j \not\in \ca{F}_A(x) \Big\}.
$$

	\begin{defin}
		\label{Defin_CQ}
		For any given $x \in \K$, we define the following constraint qualifications at $x$ for \ref{Prob_Ori}.
		\begin{itemize}
			\item The linear independence constraint qualification (LICQ) with respect to \ref{Prob_Ori} holds at $x$ if $\{\nabla c_l(x): l\in [p]\}\cup\{\nabla \ec_i(x):  i \in [N_E] \} \cup \{\nabla \ic_j(x): j \in \ca{F}(x)\}$ is a linearly independent set in $\bb{R}^n$. 
			\item The Mangasarian-Fromovitz constraint qualification (MFCQ) with respect to \ref{Prob_Ori} holds at $x$ if $\{\nabla c_l(x): l\in [p]\}\cup\{\nabla \ec_i(x): 
			i \in [N_E]\}$ is linearly independent and there exists $d \in \Rn$ such that 
			\begin{align*}
				&\inner{d, \nabla c_l(x)} = 0, ~ \forall l  \in [p],\\
				&\inner{d, \nabla \ec_i(x)} = 0, ~ \forall i  \in [N_E],\\
				&\inner{d, \nabla \ic_j(x)} \leq 0, ~ \forall j \in \ca{F}(x).
			\end{align*}
			\item The GCQ with respect to \ref{Prob_Ori} holds at $x$ if $\TK(x)^\circ = \TKlin(x)^\circ$.
		\end{itemize}
	
		Similarly, for any given $x \in \KA$, we define the following constraint qualifications at $x$ for \ref{Prob_Pen}.
		\begin{itemize}
			\item The linear independence constraint qualification (LICQ) with respect to \ref{Prob_Pen} holds at $x$ if $\{\nabla \tilde{\ec}_i(x): i \in [N_E] \} \cup \{\nabla \tilde{\ic}_j(x): j \in \ca{F}(x)\}$ is a linearly independent set in $\bb{R}^n$. 
			\item The Mangasarian-Fromovitz constraint qualification (MFCQ) with respect to \ref{Prob_Pen} holds at $x$ if $\{\nabla \tilde{\ec}_i(x): i \in [N_E]\}$ is linearly independent and there exists $d \in \Rn$ such that 
			\begin{align*}
				&\inner{d, \nabla \tilde{\ec}_i(x)} = 0, ~ \forall i  \in [N_E],\\
				&\inner{d, \nabla \tilde{\ic}_j(x)} \leq 0, ~ \forall j \in \ca{F}_A(x).
			\end{align*}
			\item The GCQ with respect to \ref{Prob_Pen} holds at $x$ if $\TKA(x)^\circ = \TKAlin(x)^\circ$.
		\end{itemize}
	\end{defin}

	It is worth mentioning that the constraint qualifications in Definition \ref{Defin_CQ} are equivalent to the Riemannian constraint qualifications defined in \cite[Definition 3.12]{bergmann2019intrinsic}, as discussed in \cite[Remark 3.13]{bergmann2019intrinsic}. 
	Furthermore, based on the definition of tangent cones in Definition \ref{Defin_tangent_cones},  we define the stationary points for \ref{Prob_Ori} and \ref{Prob_Pen} as follows.
	\begin{defin}
		\label{Defin_FOSP}
		We say $x \in \K$ is a first-order stationary point of \ref{Prob_Ori} if  it satisfies 
		\begin{equation}
			0 \in \partial f(x) + \TK(x)^\circ. 
		\end{equation} 
		Moreover, $x \in \KA$ is a first-order stationary point of \ref{Prob_Pen} if 
		\begin{equation}
			0 \in \partial h(x) + \TKA(x)^{\circ}. 
		\end{equation}
	\end{defin}

	\subsubsection{KKT conditions}
	In this subsection, we introduce the definition of KKT conditions and optimality conditions for both \ref{Prob_Ori} and \ref{Prob_Pen}.
	
	\begin{defin}
		For any given $x \in \Rn$, the Lagrangian of \ref{Prob_Ori} is defined as 
		\begin{equation}
			\LNLP(x, \lambda, \rho, \mu):= f(x) + \rho\tp c(x) + \lambda\tp u(x)   + \mu \tp v(x),
		\end{equation}
		whose the partial subdifferential with respect to variable $x$ is defined as
		\begin{equation}
			\partial_x \LNLP(x, \rho, \lambda, \mu):= \partial f(x) + \sum_{l \in [p]} \rho_l  \nabla c_l(x) + \sum_{i \in [N_E]} \lambda_i  \nabla u_i(x)  + \sum_{ j\in [N_I] } \mu_j \nabla v_j(x).
		\end{equation}
		Moreover, for any given $x \in \bb{R}^n$, the Lagrangian of \ref{Prob_Pen} is defined as 
		\begin{equation}
			 \LCDP (x, \lambda, \mu) := h(x) + \lambda\tp \tilde{\ec}(x) + \mu\tp \tilde{\ic}(x),
		\end{equation}
		whose partial subdifferential with respect to variable $x$ is defined as
		\begin{equation}
			\partial_x \LCDP(x,\lambda, \mu):= \partial h(x) + \sum_{ i\in [N_E]} \lambda_i \nabla \tilde{\ec}_i(x) + \sum_{j \in [N_I]} \mu_j \nabla \tilde{\ic}_j(x) .
		\end{equation} 
	\end{defin}
	
	Based on the  Lagrangians of \ref{Prob_Ori} and \ref{Prob_Pen}, we present the definition of KKT points as follows. 
	\begin{defin}
		\label{Defin_KKT_NLP}
		For any given $x \in \M$, we say $x$ is a KKT point to \ref{Prob_Ori} (with  multipliers $\rho$, $\lambda$ and $\mu$) if 
		\begin{equation}
			\begin{aligned}
				&0 \in \partial_x  \LNLP (x, \rho,\lambda,  \mu),\\
				& c(x) = 0,  ~u(x) = 0, ~ v(x) \leq 0, ~\mu \geq 0,~ \mu\tp v(x) = 0. 
			\end{aligned}
		\end{equation}
	\end{defin}

	\begin{defin}
		\label{Defin_KKT_CDP}
		For any given $x \in \bb{R}^n$, we say $x$ is a KKT point to \ref{Prob_Pen} (with  multipliers $\lambda$ and $\mu$) if 
		\begin{equation}
			\begin{aligned}
			&0 \in \partial_x \LCDP(x, \lambda, \mu),\\
			& \tilde{\ec}(x) = 0,  ~\tilde{\ic}(x) \leq 0, ~\mu \geq 0,~ \mu\tp \tilde{\ic}(x) = 0. 
		\end{aligned}
		\end{equation}
	\end{defin}

	%	\begin{defin}
	%		For any given $x \in \bb{R}^n$, we say $x$ is a  KKT point to \ref{Prob_Pen} (with  multipliers $\lambda$ and $\mu$) if 
	%		\begin{align}
	%			& 0\in  \partial h(x) + \Ja(x) \left(\sum_{i \in[N_E]} \lambda_i \nabla  u_i(\A(x)) + \sum_{j \in \ca{F}(x)} \mu_j \nabla v_j(\A(x))\right),\\
	%			& u(x) = 0,\\
	%			& \mu \geq 0, ~v(x) \leq 0, ~ \mu\tp v(x) = 0. 
	%		\end{align}
	%	\end{defin}
	%	
	%	
	%	\begin{defin}
	%		For any given $x \in \bb{R}^n$, we say $x$ is an $\varepsilon$-KKT point to \ref{Prob_Pen} (with  multipliers $\lambda$ and $\mu$) if there exists multipliers $\lambda$ and $\mu \in \bb{R}^{N_I}_+$ such that 
	%		\begin{align}
	%			&\norm{u(\A(x))} + \norm{v(\A(x))^+} \leq \varepsilon,\\
	%			&\mathrm{dist}\left( 0,  \partial h(x) + \Ja(x) \left( \sum_{1\leq j\leq N_E} \lambda_i \nabla  u_i(\A(x)) + \sum_{j \in [N_I]} \mu_j \nabla v_j(\A(x))\right) \right) \leq \varepsilon,\\
	%			& \sum_{i \in [N_E]}|\lambda_i u_i(\A(x))| + \sum_{j \in [N_I]}|\mu_j v_j(\A(x))| \leq \varepsilon. 
	%		\end{align}
	%	\end{defin}

	\subsection{Constants}
	%	In this paper, for any given $x \in \M$, we choose an positive scalar $\rho_x \in (0, +\infty)$ such that 
	%	\begin{equation*}
	%		 \quad \inf \left\{\sigma_{\min}(\Jc(y)): y \in \bb{R}^n, \norm{y-x} \leq \rho_x \right\} \geq \frac{1}{2} \sigma_{\min}(\Jc(x)).
	%	\end{equation*}
	In this paper, for any given $x \in \M$, we choose a positive scalar $\rho_x$ such that 
	\begin{equation*}
		\rho_x := \mathop{\arg\max}_{0 < \rho \leq 1}~ \left\{\rho:  \inf \left\{\sigma_{\min}(\Jc(y)): y \in \ca{B}_{x, \rho}\right\} \geq \frac{1}{2} \sigma_{\min}(\Jc(x))\right\}. 
	\end{equation*}
	%	the largest $\rho$ subjects to $\sigma_{\min}(\Jc(y)) \geq \frac{1}{2} \sigma_{\min}(\Jc(x))$ holds for any $\norm{y-x} \leq \rho_x$. Let $\Theta_x:= \{y \in \bb{R}^{n}: \norm{y - x} \leq \rho_{x} \}$, for any given $x \in \M$, 
	Then for any given $x \in \M$,  based on the definition of $\rho_x$, we define $\Theta_x:= \{ y \in \ca{X}: \norm{y-x} \leq \rho_x \}$ and set several constants as follows:
	\begin{itemize}
		\item $\Lsc  := \sigma_{\min}(\Jc(x))$; 
		\item $\Lf  := \sup_{y \in \Theta_x}\sup_{w \in \partial f(\A(y))}~ \norm{w}$;
		\item $\Mu := \sup_{y \in \Theta_x } \norm{\Ju(y)} $;
		\item $\Mv := \sup_{y \in \Theta_x} \norm{\Jv(y)} $;
		\item $\Mc  := \sup_{y \in \Theta_x}   \norm{\Jc(y)} $;
		\item $\Ma  := \sup_{y \in \Theta_x}  \norm{\Ja(y)}$;
		\item $\Lc := \sup_{y, z \in \Theta_x, y\neq z} \frac{\norm{\Jc(y) - \Jc(z) }}{\norm{y-z}} $;
		\item $\La := \sup_{y, z \in \Theta_x, y\neq z} \frac{\norm{\Ja(y) - \Ja(z) }}{\norm{y-z}} $;
		\item $\Lac := \sup_{y, z \in \Theta_x, y\neq z} \frac{\norm{ \Ja(y)\Jc(\A(y)) - \Ja(z)\Jc(\A(z)) }}{\norm{y-z}} $.
	\end{itemize}
	It is worth mentioning that all the aforementioned constants are independent of the penalty parameter $\beta$. 
	Moreover, for any given $x \in \bb{R}^n$ with multipliers $(\lambda, \mu)$, we set $M_{x,\lambda,\mu} :=  \Lf+ \norm{\lambda}_1\Mu +\norm{\mu}_1\Mv $. 
	Furthermore, based on these constants, we set 
	\begin{itemize}
		%		\item $\varepsilon_{1,K} := \inf_{x \in \ca{B}_{2r} \setminus \Omega_{r} } \norm{c(x)}$;
		%		\item ;
		%		\item $\HOmegax{x} := \left\{ y \in \bb{R}^n: \norm{y - x}  \leq \varepsilon_x \right\}$;
		\item $\varepsilon_x := \min \left\{ \frac{\rho_{x}}{2}, \frac{\Lsc}{32\Lc(\Ma+1)}, \frac{\Lsc ^2}{8\Lac\Mc } \right\}$;
		\item $\Omegax{x} := \left\{ y \in \Rn: \norm{y - x}  \leq \varepsilon_x \right\}$;
		\item $\BOmegax{x} := \left\{ y \in \Rn: \norm{y - x}  \leq \frac{\Lsc \varepsilon_x}{4\Mc(\Ma +1) + \Lsc}  \right\}$; 
		\item $\Omega:= \bigcup_{x \in \M} \Omegax{x}$;
		\item $\overline{\Omega}:= \bigcup_{x \in \M} \BOmegax{x}$.
	\end{itemize}
	Note that $\varepsilon_x \leq \frac{\Lsc ^2}{8\Lac\Mc }$ automatically implies that $\varepsilon_x \leq \frac{\Lsc}{4\Lac}$ since $\Lsc \leq \Mc$. Additionally, it is worth mentioning that for any given $x \in \M$, Assumption \ref{Assumption_1} guarantees that $\Lsc > 0$ and $\varepsilon_x > 0$. Furthermore, from the definition of $\overline{\Omega}$, we can conclude that $\BOmegax{x} \subset \Omegax{x} \subset \Theta_x$ holds for any given $x \in \M$, and $\M$ lies in the interior of $\overline{\Omega}$.

	\section{Theoretical Properties}
	In this section, we present the theoretical properties of \ref{Prob_Pen}. We first present the explicit expression of $\partial h(x)$ in the following proposition. We omit its proof since it can be established by direct calculation. 
	\begin{prop}
		\label{Prop_partial_h}
		For any $x \in \bb{R}^n$, it holds that 
		\begin{align}
			\partial h(x) ={}& \Ja(x) \partial f(\A(x)) + \beta \Jc(x) c(x),\\
			\nabla \tilde{\ec}_i(x) ={}& \Ja(x) \nabla \ec_i(\A(x)) + \tau_i \Jc(x) c(x),\\
			\nabla \tilde{\ic}_j(x) ={}& \Ja(x) \nabla \ic_j(\A(x)) + \gamma_j \Jc(x) c(x).
		\end{align}
		Moreover, for any $x \in \M$, $i \in [N_E]$ and $j \in [N_I]$,  it holds that 
		\begin{equation*}
		\partial h(x) = \Ja(x) \partial f(x), ~ \nabla \tilde{\ec}_i(x) = \Ja(x) \nabla \ec_i(x), ~ \text{and}~\nabla \tilde{\ic}_j(x) = \Ja(x) \nabla \ic_j(x).
		\end{equation*}
%		\begin{align*}
%			\partial h(x) ={}& \Ja(x) \partial f(x),\\
%			\nabla \tilde{\ec}_i(x) ={}& \Ja(x) \nabla \ec_i(x),\\
%			\nabla \tilde{\ic}_j(x) ={}& \Ja(x) \nabla \ic_j(x).
%		\end{align*}
	\end{prop}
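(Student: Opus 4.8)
The plan is to decompose $h = g + \tfrac{\beta}{2}\norm{c(\cdot)}^2$ with $g := f\circ\A$, handle the two summands by the Clarke chain rule and the classical chain rule respectively, and then read off the values on $\M$ from Assumption \ref{Assumption_2}. First I would record that, since $c$ is locally Lipschitz smooth by Assumption \ref{Assumption_1}, the map $x \mapsto \tfrac{1}{2}\norm{c(x)}^2 = \tfrac{1}{2}\sum_{l\in[p]} c_l(x)^2$ is continuously differentiable with gradient $\sum_{l\in[p]} c_l(x)\nabla c_l(x) = \Jc(x)c(x)$; the same computation applied with scalars $\beta$, $\tau_i$, $\gamma_j$ produces the penalty-term contributions $\beta\Jc(x)c(x)$, $\tau_i\Jc(x)c(x)$, $\gamma_j\Jc(x)c(x)$ appearing in the claimed formulas.

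For the composite term $g = f\circ\A$, I would invoke the Clarke chain rule for a locally Lipschitz function composed with a $C^1$ map (e.g.\ \cite[Theorem 2.3.10]{clarke1990optimization}): because $\A$ is locally Lipschitz smooth (Assumption \ref{Assumption_2}) and $f$ is locally Lipschitz continuous (Assumption \ref{Assumption_1}), one has $\partial g(x) \subseteq \Ja(x)\,\partial f(\A(x))$, and the right-hand side is already closed and convex since $\partial f(\A(x))$ is compact convex and $\Ja(x)$ acts linearly. The only substantive point is upgrading this inclusion to an equality, and this is exactly where Clarke regularity of $f$ (Assumption \ref{Assumption_1}) enters: the regular form of the chain rule gives that $g$ is itself Clarke regular at $x$ and $\partial g(x) = \Ja(x)\,\partial f(\A(x))$. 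For the smooth constraint pieces $u_i\circ\A$ and $v_j\circ\A$, the argument is purely classical — a composition of $C^1$ maps is $C^1$ with gradients $\Ja(x)\nabla u_i(\A(x))$ and $\Ja(x)\nabla v_j(\A(x))$.

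I would then assemble the three identities with the Clarke sum rule (\cite[Proposition 2.3.3]{clarke1990optimization} and its corollary), which holds with equality here because the summand $\tfrac{\beta}{2}\norm{c(\cdot)}^2$ is $C^1$ (hence strictly differentiable): $\partial h(x) = \partial g(x) + \beta\Jc(x)c(x) = \Ja(x)\,\partial f(\A(x)) + \beta\Jc(x)c(x)$, and likewise $\nabla\tilde{u}_i(x) = \Ja(x)\nabla u_i(\A(x)) + \tau_i\Jc(x)c(x)$ and $\nabla\tilde{v}_j(x) = \Ja(x)\nabla v_j(\A(x)) + \gamma_j\Jc(x)c(x)$.

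Finally, for the specialization to $x\in\M$, I would use the two structural facts: $\A(x) = x$ by Assumption \ref{Assumption_2}, so that $\partial f(\A(x)) = \partial f(x)$, $\nabla u_i(\A(x)) = \nabla u_i(x)$, $\nabla v_j(\A(x)) = \nabla v_j(x)$; and $c(x) = 0$ by definition of $\M$, so that every penalty-gradient term $\Jc(x)c(x)$ vanishes irrespective of the penalty parameters. Substituting these into the general formulas yields $\partial h(x) = \Ja(x)\,\partial f(x)$, $\nabla\tilde{u}_i(x) = \Ja(x)\nabla u_i(x)$, and $\nabla\tilde{v}_j(x) = \Ja(x)\nabla v_j(x)$. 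I expect no genuine obstacle in this argument; it is essentially bookkeeping, and the one place demanding care is citing the equality (regular) version of the Clarke chain rule, which is legitimate precisely because $f$ is assumed Clarke regular.
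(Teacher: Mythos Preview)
Your proposal is correct and is exactly the ``direct calculation'' the paper alludes to; in fact the paper omits the proof entirely, stating only that it ``can be established by direct calculation.'' Your care in invoking the equality form of the Clarke chain rule via the regularity of $f$ (Assumption~\ref{Assumption_1}) is the one nontrivial point, and you handle it correctly.
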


	\subsection{Constraint qualifications}

	In this subsection, we analyze the relationship on the constraint qualifications between \ref{Prob_Ori} and \ref{Prob_Pen}. We show that LICQ and MFCQ with respect to \ref{Prob_Ori} imply the validity of the corresponding constraint qualifications with respect to \ref{Prob_Pen}, while GCQ with respect to \ref{Prob_Ori} implies that the KKT points of \ref{Prob_Pen} are all first-order stationary points of \ref{Prob_Pen}. Fugure \ref{Figure_CQs} summarizes the main results in this subsection. 
	
	\begin{figure}[htbp]
		\begin{equation*}
			\begin{tikzcd}[row sep=huge,column sep=normal]
				\text{LICQ for \ref{Prob_Ori}}  \arrow[d, Rightarrow, "\text{Prop. \ref{Prop_Equivalence_LICQ}}"] \arrow[r, Rightarrow] & \text{MFCQ for \ref{Prob_Ori}} \arrow[d, Rightarrow, "\text{Prop. \ref{Prop_Equivalence_MFCQ}}"] \arrow[r, Rightarrow] & \text{GCQ for \ref{Prob_Ori}} \arrow[d, Rightarrow, "\text{Prop. \ref{Prop_CQ_equivalence_GCQ}}"]\\
				\text{LICQ for \ref{Prob_Pen}}  \arrow[r, Rightarrow] & \text{MFCQ for \ref{Prob_Pen}} \arrow[r, Rightarrow] & \text{\eqref{Eq_Prop_CQ_equivalence_GCQ} for \ref{Prob_Pen}}\\
			\end{tikzcd}
		\end{equation*}
		\vspace{-15mm}
		\caption{Relationships on  the constraint qualifications between \ref{Prob_Ori} and \ref{Prob_Pen}. The fact that \eqref{Eq_Prop_CQ_equivalence_GCQ} can be regarded as a constraint qualification for \ref{Prob_Pen} is shown in Theorem \ref{The_CQ_fin}. Here ``Prop.'' is the abbreviation for ``Proposition''.}
		\label{Figure_CQs}
	\end{figure}
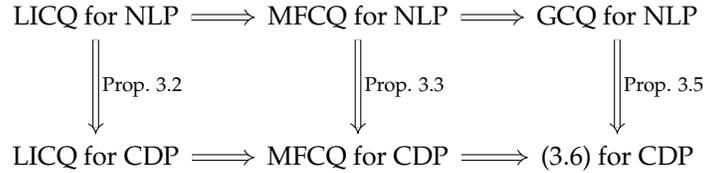
	
	The proofs for Proposition \ref{Prop_Equivalence_LICQ} -- Proposition \ref{Prop_CQ_equivalence_GCQ} are presented in Section \ref{Subsection_proofs_CQs}. 
	
	\begin{prop}
		\label{Prop_Equivalence_LICQ}
		For any $x \in \K$, suppose the LICQ with respect to \ref{Prob_Ori} holds at $x$, then  the  LICQ with respect to \ref{Prob_Pen} holds at $x$.
	\end{prop}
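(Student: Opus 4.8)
The plan is to reduce LICQ for \ref{Prob_Pen} at $x$ to LICQ for \ref{Prob_Ori} at $x$ by peeling off the factor $\Ja(x)$ that appears in the gradients of the constraints of \ref{Prob_Pen}. Since $x \in \K \subseteq \M$, the second part of Proposition \ref{Prop_partial_h} gives $\nabla \tilde{\ec}_i(x) = \Ja(x)\nabla \ec_i(x)$ for all $i \in [N_E]$ and $\nabla \tilde{\ic}_j(x) = \Ja(x)\nabla \ic_j(x)$ for all $j \in \ca{F}(x)$. Thus, to verify LICQ for \ref{Prob_Pen}, it suffices to show that $\Ja(x)\bigl(\sum_{i\in[N_E]} \lambda_i\nabla \ec_i(x) + \sum_{j\in\ca{F}(x)} \mu_j\nabla \ic_j(x)\bigr) = 0$ implies $\lambda = 0$ and $\mu = 0$.

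The key fact I would establish first is that $\Nspace(\Ja(x)) = \mathrm{range}(\Jc(x)) = \Nx$ for every $x \in \M$. The inclusion $\mathrm{range}(\Jc(x)) \subseteq \Nspace(\Ja(x))$ is exactly item~3 of Assumption \ref{Assumption_2}, namely $\Ja(x)\Jc(x) = 0$. For the reverse inclusion, I would differentiate the identity $\A(y) = y$ (item~2 of Assumption \ref{Assumption_2}) along smooth curves in $\M$ through $x$: for every $d \in \Tx$ this yields $\Ja(x)\tp d = d$, so $\Tx \subseteq \mathrm{range}(\Ja(x)\tp)$. On the other hand, $\Ja(x)\Jc(x) = 0$ says that $\mathrm{range}(\Jc(x)) = \Nx$ is orthogonal to $\mathrm{range}(\Ja(x)\tp)$, i.e., $\mathrm{range}(\Ja(x)\tp) \subseteq \Nx^{\perp} = \Tx$. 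Hence $\mathrm{range}(\Ja(x)\tp) = \Tx$, and therefore $\Nspace(\Ja(x)) = \mathrm{range}(\Ja(x)\tp)^{\perp} = \Tx^{\perp} = \Nx = \mathrm{range}(\Jc(x))$, where item~3 of Assumption \ref{Assumption_1} (full rank of $\Jc$ on $\M$) is what guarantees $\mathrm{range}(\Jc(x)) = \Nx$ and $\Tx = \Nx^{\perp}$.

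Granting this, the argument finishes quickly. Set $w := \sum_{i\in[N_E]} \lambda_i\nabla \ec_i(x) + \sum_{j\in\ca{F}(x)} \mu_j\nabla \ic_j(x)$ and suppose $\Ja(x)w = 0$. Then $w \in \Nspace(\Ja(x)) = \mathrm{range}(\Jc(x))$, so $w = \sum_{l\in[p]}\rho_l\nabla c_l(x)$ for some $\rho \in \mathbb{R}^p$. Rearranging, $\sum_{l\in[p]}\rho_l\nabla c_l(x) - \sum_{i\in[N_E]}\lambda_i\nabla \ec_i(x) - \sum_{j\in\ca{F}(x)}\mu_j\nabla \ic_j(x) = 0$, and LICQ for \ref{Prob_Ori} at $x$ — linear independence of $\{\nabla c_l(x)\}_{l\in[p]} \cup \{\nabla \ec_i(x)\}_{i\in[N_E]} \cup \{\nabla \ic_j(x)\}_{j\in\ca{F}(x)}$ — forces $\rho = 0$, $\lambda = 0$, $\mu = 0$. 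In particular $\lambda = 0$ and $\mu = 0$, so $\{\nabla \tilde{\ec}_i(x)\}_{i\in[N_E]} \cup \{\nabla \tilde{\ic}_j(x)\}_{j\in\ca{F}(x)}$ is linearly independent, which is precisely LICQ for \ref{Prob_Pen} at $x$.

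I expect the only nonroutine step to be the identity $\Nspace(\Ja(x)) = \mathrm{range}(\Jc(x))$: one must be careful with the transposed-Jacobian convention and, more importantly, use $\A|_{\M} = \mathrm{id}$ to pin down $\mathrm{range}(\Ja(x)\tp)$ to be \emph{exactly} $\Tx$ rather than merely contain it — the inclusion $\mathrm{range}(\Jc(x)) \subseteq \Nspace(\Ja(x))$ alone would not suffice, since then $\Nspace(\Ja(x))$ could still absorb a nonzero combination of the $\nabla \ec_i(x)$ and $\nabla \ic_j(x)$. Everything after that is elementary linear algebra, and I anticipate the same computation will be reused in the proof of Proposition \ref{Prop_Equivalence_MFCQ}.
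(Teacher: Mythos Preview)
Your proof is correct and follows essentially the same route as the paper's: both reduce the linear-dependence relation for \ref{Prob_Pen} to one for \ref{Prob_Ori} via $\Nspace(\Ja(x)) = \mathrm{range}(\Jc(x))$, then invoke LICQ for \ref{Prob_Ori}. The only difference is that the paper quotes this null-space identity as a standing lemma (Lemma~\ref{Le_Ja_nullspace}) rather than deriving it inline, and explicitly remarks that $\ca{F}_A(x) = \ca{F}(x)$ for $x \in \M$, which you use implicitly.
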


	\begin{prop}
		\label{Prop_Equivalence_MFCQ}
		For any $x \in \K$, suppose the MFCQ with respect to \ref{Prob_Ori} holds at $x$, then  the  MFCQ with respect to \ref{Prob_Pen} holds at $x$.
	\end{prop}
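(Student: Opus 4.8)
The plan is to transfer the MFCQ condition at $x \in \K \subseteq \M$ from \ref{Prob_Ori} to \ref{Prob_Pen} using the gradient identities of Proposition \ref{Prop_partial_h}, which for $x \in \M$ read $\nabla \tilde{\ec}_i(x) = \Ja(x)\nabla \ec_i(x)$ and $\nabla \tilde{\ic}_j(x) = \Ja(x)\nabla \ic_j(x)$, together with two structural facts about $\Ja(x)$ at points of $\M$: (i) $\Ja(x)\tp d = d$ for every $d\in\Tx$; and (ii) $\ker \Ja(x) = \mathrm{range}(\Jc(x)) = \Nx$. Both are consequences of Assumption \ref{Assumption_2} (combined with the full rank of $\Jc(x)$ on $\M$ from Assumption \ref{Assumption_1}); I will establish them at the end. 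Throughout I also use that $x\in\M$ forces $\ca{F}_A(x) = \ca{F}(x)$.

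First I would handle the linear-independence half of MFCQ for \ref{Prob_Pen}. Suppose $\sum_{i\in[N_E]}\alpha_i\nabla\tilde{\ec}_i(x) = 0$. By Proposition \ref{Prop_partial_h} this is $\Ja(x)\big(\sum_{i\in[N_E]}\alpha_i\nabla \ec_i(x)\big) = 0$, so by (ii) there is $\rho\in\bb{R}^p$ with $\sum_{i\in[N_E]}\alpha_i\nabla \ec_i(x) = \sum_{l\in[p]}\rho_l\nabla c_l(x)$, i.e.\ $\sum_{i\in[N_E]}\alpha_i\nabla \ec_i(x) - \sum_{l\in[p]}\rho_l\nabla c_l(x) = 0$. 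Since the definition of MFCQ for \ref{Prob_Ori} requires $\{\nabla c_l(x):l\in[p]\}\cup\{\nabla\ec_i(x):i\in[N_E]\}$ to be linearly independent, all $\alpha_i$ (and all $\rho_l$) vanish; hence $\{\nabla\tilde{\ec}_i(x):i\in[N_E]\}$ is linearly independent.

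Next I would exhibit the Mangasarian--Fromovitz direction. Let $d\in\Rn$ satisfy the direction conditions of MFCQ for \ref{Prob_Ori} at $x$; in particular $\inner{d,\nabla c_l(x)} = 0$ for all $l\in[p]$, so $d\in\Tx$ and hence $\Ja(x)\tp d = d$ by (i). I claim $\tilde d := d$ works for \ref{Prob_Pen}: for each $i\in[N_E]$,
\begin{equation*}
\inner{\tilde d,\nabla\tilde{\ec}_i(x)} = \inner{d,\Ja(x)\nabla \ec_i(x)} = \inner{\Ja(x)\tp d,\nabla \ec_i(x)} = \inner{d,\nabla \ec_i(x)} = 0,
\end{equation*}
and analogously $\inner{\tilde d,\nabla\tilde{\ic}_j(x)} = \inner{d,\nabla \ic_j(x)}\le 0$ for every $j\in\ca{F}_A(x) = \ca{F}(x)$. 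These are precisely the direction conditions of MFCQ for \ref{Prob_Pen} at $x$, so the argument is complete.

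The crux is fact (ii): that $\Ja(x)\tp$ is a projector onto $\Tx$ at every $x\in\M$. Fact (i) follows by differentiating $\A(\gamma(t)) = \gamma(t)$ along a smooth curve $\gamma\subset\M$ with $\gamma(0) = x$, $\gamma'(0) = d$. For (ii), $\Ja(x)\Jc(x) = 0$ gives $\mathrm{range}(\Jc(x))\subseteq\ker\Ja(x)$, and transposing it gives $\Jc(x)\tp\Ja(x)\tp = 0$, so $\mathrm{range}(\Ja(x)\tp)\subseteq\ker(\Jc(x)\tp) = \Tx$; combined with (i) this makes $\Ja(x)\tp$ an idempotent with range exactly $\Tx$, hence $\mathrm{rank}\,\Ja(x) = \mathrm{rank}\,\Ja(x)\tp = \dim\Tx = n-p$, so $\dim\ker\Ja(x) = p = \dim\mathrm{range}(\Jc(x))$, and the inclusion above upgrades to equality. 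Everything else is a direct chain-rule computation, and the same scheme, using only fact (ii), also yields the LICQ statement of Proposition \ref{Prop_Equivalence_LICQ}.
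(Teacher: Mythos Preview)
Your proof is correct and follows essentially the same approach as the paper: both use that $\ker\Ja(x)=\mathrm{range}(\Jc(x))$ (your fact (ii), the paper's Lemma \ref{Le_Ja_nullspace}) for the linear-independence half and $\Ja(x)\tp d=d$ for $d\in\Tx$ (your fact (i), the paper's Lemma \ref{Le_Ja_identical}) to push the MFCQ direction through. The only cosmetic differences are that you derive (i) and (ii) inline rather than citing them, and you also record the equality case $\inner{\tilde d,\nabla\tilde{\ec}_i(x)}=0$, which the paper leaves implicit.
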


	\begin{lem}
		\label{Le_relationship_ACQ}
		For any $x \in \K$, it holds that 
		\begin{align}
			&\TKlin(x) = \Ja(x)\tp \TKAlin(x), \quad \TKAlin(x)^\circ \cap \mathrm{range}(\Ja(x)) = \Ja(x)\TKlin(x)^\circ, \label{Eq_Le_relationship_ACQ_0} \\
			&\TK(x) \subseteq \Ja(x)\tp \TKA(x), \quad \TKA(x)^\circ \cap \mathrm{range}(\Ja(x)) \subseteq \Ja(x)\TK(x)^\circ.\label{Eq_Le_relationship_ACQ_1} 
		\end{align}
	\end{lem}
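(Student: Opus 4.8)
### Proof Proposal for Lemma \ref{Le_relationship_ACQ}

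The plan is to work entirely over a fixed $x \in \K$ (hence $x \in \M$, so $\A(x) = x$), and exploit the two key algebraic facts that hold at such a point: by Assumption \ref{Assumption_2} we have $\Ja(x)\Jc(x) = 0$, and by Proposition \ref{Prop_partial_h} we have $\nabla \tilde{u}_i(x) = \Ja(x)\nabla u_i(x)$ and $\nabla \tilde{v}_j(x) = \Ja(x)\nabla v_j(x)$, while also $\ca{F}(x) = \ca{F}_A(x)$. The structural ingredient underlying everything is a description of $\mathrm{range}(\Ja(x))$ and its orthogonal complement: since $\Ja(x)\tp$ annihilates $\mathrm{range}(\Jc(x)) = \Nx$ on the left (equivalently $\Ja(x)$ has $\Nx$ in its kernel by the transpose relation $\Ja(x)\Jc(x)=0$), and since $\A$ restricted to $\M$ is the identity so $\Ja(x)\tp$ is the identity on $\Tx$, one gets that $\Ja(x)\tp$ acts as a projection-like map with $\Ja(x)\tp d = d$ for $d \in \Tx$; I expect this will give $\mathrm{range}(\Ja(x)\tp) \supseteq \Tx$ and the complementary decomposition $\Rn = \Tx \oplus \Nx$ will be the bookkeeping device throughout.

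First I would prove the linearizing-cone identity \eqref{Eq_Le_relationship_ACQ_0}. For the equality $\TKlin(x) = \Ja(x)\tp \TKAlin(x)$: given $w \in \TKAlin(x)$, i.e. $\inner{w, \nabla\tilde u_i(x)} = 0$ and $\inner{w, \nabla\tilde v_j(x)} \le 0$ for $j \in \ca{F}_A(x)$, substitute $\nabla\tilde u_i(x) = \Ja(x)\nabla u_i(x)$ to get $\inner{\Ja(x)\tp w, \nabla u_i(x)} = 0$, similarly for the $v$'s; and the condition $\inner{\Ja(x)\tp w, \nabla c_l(x)} = 0$ holds automatically because $\Ja(x)\tp w \in \mathrm{range}(\Ja(x)\tp) \perp \Nx = \mathrm{range}(\Jc(x))$ — this is exactly where $\Ja(x)\Jc(x) = 0$ enters. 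So $\Ja(x)\tp w \in \TKlin(x)$. Conversely, given $d \in \TKlin(x)$, one has $d \in \Tx$ (since $\inner{d,\nabla c_l(x)} = 0$ for all $l$), hence $\Ja(x)\tp d = d$, and then $d = \Ja(x)\tp d$ with $d$ itself checked to lie in $\TKAlin(x)$ using the same gradient substitution in the reverse direction; this shows $d \in \Ja(x)\tp\TKAlin(x)$. For the second identity in \eqref{Eq_Le_relationship_ACQ_0}, I would use the explicit polar-cone formulas given in the excerpt: $\TKlin(x)^\circ$ is the set of combinations $\sum \rho_l \nabla c_l(x) + \sum\lambda_i\nabla u_i(x) + \sum\mu_j\nabla v_j(x)$, and applying $\Ja(x)$ kills the $\nabla c_l(x)$ terms (again by $\Ja(x)\Jc(x)=0$) and turns the rest into $\sum\lambda_i\nabla\tilde u_i(x) + \sum\mu_j\nabla\tilde v_j(x)$, i.e. precisely $\TKAlin(x)^\circ$; so $\Ja(x)\TKlin(x)^\circ = \TKAlin(x)^\circ$, and one only needs to check this already lies in $\mathrm{range}(\Ja(x))$ — immediate — and that no elements of $\TKAlin(x)^\circ \cap \mathrm{range}(\Ja(x))$ are missed, which follows since the polar formula for $\TKAlin(x)^\circ$ is entirely inside $\mathrm{range}(\Ja(x))$ to begin with, so the intersection is all of $\TKAlin(x)^\circ$.

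Next I would prove the tangent-cone inclusions \eqref{Eq_Le_relationship_ACQ_1}. For $\TK(x) \subseteq \Ja(x)\tp\TKA(x)$: take $d \in \TK(x)$ realized by $\xk \in \K \to x$, $t_k \downarrow 0$ with $(\xk - x)/t_k \to d$. Since $\K \subseteq \KA$, the same sequence $\xk$ lies in $\KA$, so $(\xk - x)/t_k \to d$ already witnesses $d \in \TKA(x)$; moreover $d \in \Tx$ (tangent cone to $\K \subseteq \M$ is contained in the tangent space of $\M$), so $\Ja(x)\tp d = d$, giving $d = \Ja(x)\tp d \in \Ja(x)\tp\TKA(x)$. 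For the polar inclusion $\TKA(x)^\circ \cap \mathrm{range}(\Ja(x)) \subseteq \Ja(x)\TK(x)^\circ$: take $w$ in the left-hand side, write $w = \Ja(x)z$ for some $z$, and I want to produce an element of $\TK(x)^\circ$ mapping to $w$ under $\Ja(x)$. The route I expect to work is: from $w \perp \TKA(x)$ and the just-proved inclusion $\TK(x) \subseteq \Ja(x)\tp\TKA(x)$, deduce that for any $d \in \TK(x)$, writing $d = \Ja(x)\tp w'$ with $w' \in \TKA(x)$, we get $\inner{w, d} = \inner{w, \Ja(x)\tp w'} = \inner{\Ja(x)w, w'}$ — but this needs $w \in \mathrm{range}(\Ja(x))$ versus $\Ja(x)w$, so the cleaner approach is to decompose $z = z_{\Tx} + z_{\Nx}$ and note $\Ja(x)z = \Ja(x)z_{\Tx} + \Ja(x)z_{\Nx}$; I would argue $\Ja(x)$ maps $\Nx$ into $\Nx$ or controls it so that the relevant pairing against $d \in \Tx$ is $\inner{w,d} = \inner{\Ja(x)z_{\Tx}, d}$, and then relate this back to $z_{\Tx} \perp \TK(x)$...

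The main obstacle I anticipate is precisely this last polar inclusion in \eqref{Eq_Le_relationship_ACQ_1}: unlike the linearizing-cone case, there is no explicit generator description of $\TKA(x)^\circ$, so I cannot simply "apply $\Ja(x)$ to generators." The argument must go through a duality/polar-calculus manipulation, using $\TK(x) \subseteq \Ja(x)\tp\TKA(x)$ together with the fine structure of $\Ja(x)$ on the splitting $\Tx \oplus \Nx$ — in particular the fact that $\Ja(x)\tp$ is the identity on $\Tx$ and kills $\Nx$, so $\Ja(x)\tp\Ja(x)$ restricted appropriately behaves like a projection. The delicate point is showing that an element $w \in \mathrm{range}(\Ja(x))$ that is polar to the (possibly larger) cone $\Ja(x)\tp\TKA(x)$ can be pulled back along $\Ja(x)$ to an element polar to $\TKA(x)$, without losing membership when we then push forward to get something polar to $\TK(x)$; I would handle this by carefully tracking which pieces of $w$ lie in $\Tx$ versus $\Nx$ and invoking that $\Nx \subseteq \TKA(x)^\circ$-type facts may fail, so instead leaning on the sandwich $\Ja(x)\TK(x)^\circ \subseteq \Ja(x)\TKAlin(x)^\circ \cap \dots$ is not available either — ultimately this is a short but technical linear-algebra-plus-polarity lemma that I would isolate and prove separately before assembling the inclusion.
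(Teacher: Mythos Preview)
Your treatment of the linearizing-cone identities in \eqref{Eq_Le_relationship_ACQ_0} is correct and matches the paper's argument for the equality $\TKlin(x) = \Ja(x)\tp\TKAlin(x)$. For the polar identity you use the explicit generator description of $\TKlin(x)^\circ$ and $\TKAlin(x)^\circ$; this is a valid alternative. The paper instead isolates an abstract duality lemma (Lemma~\ref{Le_subspace_polar_cone}): for any closed cone $\ca{X}_1$, setting $\ca{X}_2 = \Ja(x)\tp\ca{X}_1$ one has $\ca{X}_1^\circ \cap \mathrm{range}(\Ja(x)) = \Ja(x)\ca{X}_2^\circ$, and applies it with $\ca{X}_1 = \TKAlin(x)$. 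The payoff of the abstract lemma is that it applies uniformly to both \eqref{Eq_Le_relationship_ACQ_0} and \eqref{Eq_Le_relationship_ACQ_1}, which is exactly where you get stuck.

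For the polar inclusion in \eqref{Eq_Le_relationship_ACQ_1} your decomposition idea in fact goes through with no further obstacle; you are one observation away from finishing it. Given $w \in \TKA(x)^\circ \cap \mathrm{range}(\Ja(x))$, write $w = \Ja(x)z$ and split $z = z_T + z_N$ along $\Tx \oplus \Nx$. By Lemma~\ref{Le_Ja_nullspace} one has $\ker(\Ja(x)) = \Nx$ exactly, so $\Ja(x)z_N = 0$ and hence $w = \Ja(x)z_T$. Now for any $d \in \TK(x) \subseteq \Tx$, Lemma~\ref{Le_Ja_identical} gives $\Ja(x)\tp d = d$, whence
\[
\inner{z_T, d} = \inner{z_T, \Ja(x)\tp d} = \inner{\Ja(x)z_T, d} = \inner{w, d} \leq 0,
\]
the last inequality because $d \in \TK(x) \subseteq \TKA(x)$ (from $\K \subseteq \KA$) and $w \in \TKA(x)^\circ$. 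Thus $z_T \in \TK(x)^\circ$ and $w = \Ja(x)z_T \in \Ja(x)\TK(x)^\circ$. The paper's route is different but equivalent: in proving Lemma~\ref{Le_subspace_polar_cone} it pulls $w$ back along the pseudo-inverse $\Ja(x)^\dagger$ (so $\Ja(x)\Ja(x)^\dagger w = w$), shows $\Ja(x)^\dagger w \in (\Ja(x)\tp\TKA(x))^\circ$, and then uses $\TK(x) \subseteq \Ja(x)\tp\TKA(x)$ to pass to $\TK(x)^\circ$. Both arguments rest on the same structural facts---$\Ja(x)$ has kernel $\Nx$ and $\Ja(x)\tp$ fixes $\Tx$---the paper simply packages them once and reuses the package.
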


	In the rest of this subsection, we aim to illustrate that the GCQ for \ref{Prob_Ori} implies that all the KKT points of \ref{Prob_Pen} are first-order stationary points of \ref{Prob_Pen}. We first show the relationships between $\TKA(x)$ and $\TKAlin(x)$ when GCQ for \ref{Prob_Ori} holds at $x \in \M$. 
%	\begin{prop}
%		\label{Prop_CQ_equivalence_ACQ}
%		For any $x \in \K$, suppose $\TK(x) = \TKlin(x)$, then it holds that 
%		\begin{equation}
%			\Ja(x)\tp \TKA(x)  = \Ja(x)\tp\TKAlin(x).
%		\end{equation}
%		
%		
%	\end{prop}
%	
	
	\begin{prop}
		\label{Prop_CQ_equivalence_GCQ}
		For any $x \in \K$, suppose GCQ for \ref{Prob_Ori} holds at $x$, i.e., $\TK(x)^\circ = \TKlin(x)^\circ$. Then it holds that 
		\begin{equation}
			\label{Eq_Prop_CQ_equivalence_GCQ}
			\TKA(x)^\circ \cap \mathrm{range}(\Ja(x)) =  \TKAlin(x)^\circ \cap \mathrm{range}(\Ja(x)).
		\end{equation}
	\end{prop}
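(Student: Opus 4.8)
The plan is to obtain the identity \eqref{Eq_Prop_CQ_equivalence_GCQ} by combining the purely geometric relations collected in Lemma \ref{Le_relationship_ACQ} with the hypothesis $\TK(x)^\circ = \TKlin(x)^\circ$. Essentially all of the technical content has already been absorbed into that lemma, so what remains is a short chaining argument together with one trivial inclusion; I will treat the two inclusions ``$\supseteq$'' and ``$\subseteq$'' separately.

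First I would dispose of ``$\supseteq$'', which requires no constraint qualification. Since $x \in \K \subseteq \KA$, the generic inclusion of a Bouligand tangent cone in its linearizing cone (recorded right after Definition \ref{Defin_linearize_cones}) gives $\TKA(x) \subseteq \TKAlin(x)$. Polarity reverses inclusions, so $\TKAlin(x)^\circ \subseteq \TKA(x)^\circ$, and intersecting both sides with $\mathrm{range}(\Ja(x))$ yields $\TKAlin(x)^\circ \cap \mathrm{range}(\Ja(x)) \subseteq \TKA(x)^\circ \cap \mathrm{range}(\Ja(x))$.

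For the reverse inclusion ``$\subseteq$'', take any $w \in \TKA(x)^\circ \cap \mathrm{range}(\Ja(x))$. By the second relation in \eqref{Eq_Le_relationship_ACQ_1}, $w \in \Ja(x)\TK(x)^\circ$, i.e. $w = \Ja(x)d$ for some $d \in \TK(x)^\circ$. The GCQ hypothesis for \ref{Prob_Ori} at $x$ gives $\TK(x)^\circ = \TKlin(x)^\circ$, so in fact $d \in \TKlin(x)^\circ$, whence $w = \Ja(x)d \in \Ja(x)\TKlin(x)^\circ$. Finally, the second relation in \eqref{Eq_Le_relationship_ACQ_0} identifies $\Ja(x)\TKlin(x)^\circ$ with $\TKAlin(x)^\circ \cap \mathrm{range}(\Ja(x))$, so $w$ lies in the latter set. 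Combining the two inclusions proves \eqref{Eq_Prop_CQ_equivalence_GCQ}.

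The only genuinely delicate point is thus not in this proposition but in Lemma \ref{Le_relationship_ACQ}, which I take as given: the two ingredients actually used are the identity $\TKAlin(x)^\circ \cap \mathrm{range}(\Ja(x)) = \Ja(x)\TKlin(x)^\circ$ and the tangent-cone inclusion $\TKA(x)^\circ \cap \mathrm{range}(\Ja(x)) \subseteq \Ja(x)\TK(x)^\circ$. These in turn rest on the representations $\nabla \tilde{\ec}_i(x) = \Ja(x)\nabla \ec_i(x)$ and $\nabla \tilde{\ic}_j(x) = \Ja(x)\nabla \ic_j(x)$ on $\M$ from Proposition \ref{Prop_partial_h}, together with the structural properties of $\Ja(x)$ on $\M$ implied by Assumption \ref{Assumption_2} (namely $\A(x) = x$ and $\Ja(x)\Jc(x) = 0$, which control how the adjoint $\Ja(x)\tp$ acts on $\Tx$ and how $\Ja(x)$ acts on $\Nx = \mathrm{range}(\Jc(x))$). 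Were Lemma \ref{Le_relationship_ACQ} not available, the expected obstacle would be precisely establishing these cone relations; here, however, the proposition follows immediately once the lemma is invoked.
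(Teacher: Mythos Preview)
Your proof is correct and follows essentially the same route as the paper's: both arguments combine the two relations from Lemma~\ref{Le_relationship_ACQ} with the GCQ hypothesis $\TK(x)^\circ = \TKlin(x)^\circ$ and the trivial inclusion $\TKAlin(x)^\circ \subseteq \TKA(x)^\circ$. The only cosmetic difference is that the paper packages the argument as a single circular chain of inclusions, whereas you split it into the two directions ``$\supseteq$'' and ``$\subseteq$''.
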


	As mentioned in various existing works \cite{Nocedal1999Numerical}, GCQ is usually recognized as one of the weakest constraint qualifications in the sense that a great number of constraint qualifications for \ref{Prob_Ori} leads to the validity of GCQ for \ref{Prob_Ori}. Interested readers could refer to \cite{wang2013constraint} for more details on the relationships among various constraint qualifications.  In the following theorem, we illustrate that Proposition \ref{Prop_CQ_equivalence_GCQ} ensures that KKT conditions holds at any first-order stationary point of \ref{Prob_Pen}. We present the proof for Theorem \ref{The_CQ_fin} in Section \ref{Subsection_proofs_CQs}.  
	
	\begin{theo}
		\label{The_CQ_fin}
		For any $x \in \K$, suppose $\TKA(x)^\circ \cap \mathrm{range}(\Ja(x)) =  \TKAlin(x)^\circ \cap \mathrm{range}(\Ja(x))$, then $x$ is a first-order stationary point of \ref{Prob_Pen} if and only if $x$ is a KKT point of \ref{Prob_Pen}. 
	\end{theo}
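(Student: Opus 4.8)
The plan is to show the two directions separately, with the KKT $\Rightarrow$ stationary direction being essentially immediate and the converse direction being where the hypothesis \eqref{Eq_Prop_CQ_equivalence_GCQ} does all the work. For the easy direction, suppose $x$ is a KKT point of \ref{Prob_Pen} with multipliers $(\lambda,\mu)$. By Definition \ref{Defin_KKT_CDP} we have $0\in\partial h(x)+\sum_{i\in[N_E]}\lambda_i\nabla\tilde u_i(x)+\sum_{j\in[N_I]}\mu_j\nabla\tilde v_j(x)$ with $\mu\ge 0$ and $\mu_j=0$ for $j\notin\ca{F}_A(x)$. The displayed formula for $(\TKAlin(x))^\circ$ in the excerpt shows exactly that $\sum_i\lambda_i\nabla\tilde u_i(x)+\sum_j\mu_j\nabla\tilde v_j(x)\in(\TKAlin(x))^\circ$, and since $\TKA(x)\subseteq\TKAlin(x)$ (also noted in the excerpt) we get $(\TKAlin(x))^\circ\subseteq(\TKA(x))^\circ$, hence $0\in\partial h(x)+\TKA(x)^\circ$, i.e. $x$ is a first-order stationary point of \ref{Prob_Pen} in the sense of Definition \ref{Defin_FOSP}. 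This direction never uses the hypothesis; it is just the standard fact that KKT implies stationarity.

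For the converse, suppose $x\in\K$ is a first-order stationary point of \ref{Prob_Pen}, so there exists $w\in\partial h(x)$ with $-w\in\TKA(x)^\circ$. The key observation is that, by Proposition \ref{Prop_partial_h}, for $x\in\M$ we have $\partial h(x)=\Ja(x)\partial f(x)$, so $w\in\mathrm{range}(\Ja(x))$, and therefore $-w\in\TKA(x)^\circ\cap\mathrm{range}(\Ja(x))$. Now invoke the hypothesis \eqref{Eq_Prop_CQ_equivalence_GCQ}: this intersection equals $\TKAlin(x)^\circ\cap\mathrm{range}(\Ja(x))$, so in particular $-w\in\TKAlin(x)^\circ$. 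Applying the displayed expression for $(\TKAlin(x))^\circ$, we obtain multipliers $\lambda\in\mathbb{R}^{N_E}$ and $\mu\in\mathbb{R}^{N_I}_+$ with $\mu_j=0$ for $j\notin\ca{F}_A(x)$ such that $-w=\sum_{i}\lambda_i\nabla\tilde u_i(x)+\sum_{j}\mu_j\nabla\tilde v_j(x)$. Rearranging gives $0=w+\sum_i\lambda_i\nabla\tilde u_i(x)+\sum_j\mu_j\nabla\tilde v_j(x)\in\partial_x\LCDP(x,\lambda,\mu)$. Since $x\in\K\subseteq\KA$ we have $\tilde u(x)=0$ and $\tilde v(x)\le 0$, and the complementarity $\mu^\top\tilde v(x)=0$ holds because $\mu_j=0$ whenever $\tilde v_j(x)<0$ (equivalently $j\notin\ca{F}_A(x)$). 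Hence all the conditions of Definition \ref{Defin_KKT_CDP} are met and $x$ is a KKT point of \ref{Prob_Pen}.

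The only real subtlety — the ``main obstacle'' — is making sure the first-order stationarity is witnessed by a subgradient lying in $\mathrm{range}(\Ja(x))$; this is precisely why the hypothesis is phrased with the intersection against $\mathrm{range}(\Ja(x))$ rather than as a plain equality of polar cones, and it is resolved cleanly by the second part of Proposition \ref{Prop_partial_h}, which forces $\partial h(x)\subseteq\mathrm{range}(\Ja(x))$ on $\M$. One should also double-check the bookkeeping that $\ca{F}(x)=\ca{F}_A(x)$ for $x\in\M$ (stated in the notation subsection), so that the active-set condition translates correctly between the displayed polar-cone formula and Definition \ref{Defin_KKT_CDP}; but this is routine. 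No penalty-parameter estimates enter here, so the argument is short and purely a matter of chaining the cited facts together.
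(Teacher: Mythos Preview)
Your proof is correct and follows essentially the same approach as the paper's: both directions rest on the same key observations, namely that $\TKAlin(x)^\circ\subseteq\TKA(x)^\circ$ for the easy direction, and that $\partial h(x)=\Ja(x)\partial f(x)\subseteq\mathrm{range}(\Ja(x))$ on $\M$ (Proposition~\ref{Prop_partial_h}) so the stationarity witness can be intersected with $\mathrm{range}(\Ja(x))$ before invoking the hypothesis. The only cosmetic difference is that the paper carries out the converse at the set level (writing $(\Ja(x)\partial f(x)+\TKA(x)^\circ)\cap\mathrm{range}(\Ja(x))=\Ja(x)\partial f(x)+(\TKA(x)^\circ\cap\mathrm{range}(\Ja(x)))$ and then substituting the hypothesis) rather than picking a specific $w$, but the logic is identical.
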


	\subsection{Stationarity}
	In this subsection, we aim to analyze the relationship between  \ref{Prob_Ori} and   \ref{Prob_Pen} in the aspect of their first-order stationary points and  KKT points. Figure \ref{Figure_Stat} summarizes the main results of this subsection. In addition, the proofs for Theorem \ref{The_equivalence_feasible}, Theorem \ref{The_equivalence_FOSP} and Theorem \ref{The_fval_reduction} are presented in Section \ref{Subsection_proofs_equivalence}. 
	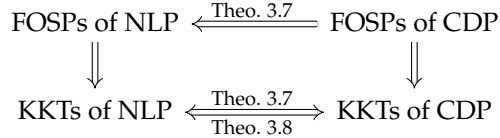
\begin{figure}[htbp]
		\begin{equation*}
			\begin{tikzcd}[row sep=normal,column sep=huge]
				\text{FOSPs of \ref{Prob_Ori}} \arrow[d, Rightarrow]  \arrow[r, Leftarrow, "\text{Theo. \ref{The_equivalence_feasible} }"] &\text{FOSPs of \ref{Prob_Pen}}  \arrow[d, Rightarrow]\\
				\text{KKTs of \ref{Prob_Ori}}  \arrow[r, Leftrightarrow, "\text{Theo. \ref{The_equivalence_feasible} }", "\text{Theo. \ref{The_equivalence_FOSP} }" swap] &\text{KKTs of \ref{Prob_Pen}}\\
			\end{tikzcd}
		\end{equation*}
	\vspace{-10mm}
	\caption{Relationships on  the first-order stationary points (FOSPs) and KKT points (KKTs) between \ref{Prob_Ori} and \ref{Prob_Pen}. Here ``Theo.'' is the abbreviation for ``Theorem''.  }
	\label{Figure_Stat}
	\end{figure}

	\begin{theo}
		\label{The_equivalence_feasible}
		For any given $x \in \K$, it holds that
		\begin{enumerate}
			\item if $x$  is a first-order stationary point of \ref{Prob_Pen}, then $x$ is a first-order stationary point of \ref{Prob_Ori};
			\item $x$ is a KKT point to \ref{Prob_Ori} if and only if $x$ is a KKT point to \ref{Prob_Pen}. 
		\end{enumerate}
	\end{theo}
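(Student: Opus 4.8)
The plan is to exploit that $x\in\K\subseteq\M$, so by Assumption \ref{Assumption_2} one has $\A(x)=x$, $c(x)=0$ and $\Ja(x)\Jc(x)=0$; consequently left-multiplying any stationarity relation by $\Ja(x)$ annihilates the $\Jc$-block, and Proposition \ref{Prop_partial_h} converts $\Ja(x)\partial f(x)$, $\Ja(x)\nabla\ec_i(x)$, $\Ja(x)\nabla\ic_j(x)$ into $\partial h(x)$, $\nabla\tilde{\ec}_i(x)$, $\nabla\tilde{\ic}_j(x)$. First I would record the one genuinely linear-algebraic ingredient, namely $\Nspace(\Ja(x))=\mathrm{range}(\Jc(x))=\Nx$ for every $x\in\M$: the inclusion ``$\supseteq$'' is immediate from $\Ja(x)\Jc(x)=0$, while for ``$\subseteq$'' I would differentiate the identity $\A(\gamma(t))=\gamma(t)$ along a smooth curve $\gamma$ in $\M$ with $\gamma(0)=x$, $\gamma'(0)=d$, to get $\Ja(x)\tp d=d$ for all $d\in\Tx$, hence $\Tx\subseteq\mathrm{range}(\Ja(x)\tp)$ and $\Nspace(\Ja(x))=\mathrm{range}(\Ja(x)\tp)^\perp\subseteq\Tx^\perp=\Nx$. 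I would also note $\mathrm{range}(\Jc(x))=\Nx\subseteq\TK(x)^\circ$ and $\subseteq\TKlin(x)^\circ$, since $\TK(x)\subseteq\TKlin(x)\subseteq\Tx$, together with the elementary fact that a closed convex cone is closed under addition.

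For the easy half of part 2 (a KKT point of \ref{Prob_Ori} is a KKT point of \ref{Prob_Pen}): starting from the multiplier equation $w+\sum_l\rho_l\nabla c_l(x)+\sum_i\lambda_i\nabla\ec_i(x)+\sum_j\mu_j\nabla\ic_j(x)=0$ with $w\in\partial f(x)$, I would left-multiply by $\Ja(x)$; the $c$-block vanishes and Proposition \ref{Prop_partial_h} turns the rest into $0\in\partial_x\LCDP(x,\lambda,\mu)$, while feasibility and complementarity transfer verbatim because $\tilde{\ec}(x)=\ec(x)$ and $\tilde{\ic}(x)=\ic(x)$ on $\M$. This direction keeps the same $(\lambda,\mu)$ and uses only $\Ja(x)\Jc(x)=0$.

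The remaining implication of part 2 (KKT of \ref{Prob_Pen} $\Rightarrow$ KKT of \ref{Prob_Ori}) and all of part 1 (first-order stationarity of \ref{Prob_Pen} $\Rightarrow$ first-order stationarity of \ref{Prob_Ori}) I would treat by a single argument, using the linearizing cone $\TKAlin$ in the first case and the Bouligand cone $\TKA$ in the second. One starts from $0\in\partial h(x)+\ca{C}$ where $\ca{C}$ is the relevant polar cone; since $\partial h(x)=\Ja(x)\partial f(x)\subseteq\mathrm{range}(\Ja(x))$, I would pick $w\in\partial f(x)$ and $\zeta\in\ca{C}$ with $\Ja(x)w+\zeta=0$, so $\zeta=-\Ja(x)w\in\mathrm{range}(\Ja(x))$, and then invoke Lemma \ref{Le_relationship_ACQ} (the equality $\TKAlin(x)^\circ\cap\mathrm{range}(\Ja(x))=\Ja(x)\TKlin(x)^\circ$ in the KKT case, the inclusion $\TKA(x)^\circ\cap\mathrm{range}(\Ja(x))\subseteq\Ja(x)\TK(x)^\circ$ in the stationarity case) to write $\zeta=\Ja(x)\eta$ with $\eta\in\TKlin(x)^\circ$ (resp.\ $\eta\in\TK(x)^\circ$). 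Then $w+\eta\in\Nspace(\Ja(x))=\mathrm{range}(\Jc(x))$, which lies in $\TKlin(x)^\circ$ (resp.\ $\TK(x)^\circ$), so $-w=\eta-(w+\eta)$ is a sum of two members of that convex cone and hence $0\in\partial f(x)+\TKlin(x)^\circ$ (resp.\ $0\in\partial f(x)+\TK(x)^\circ$); in the KKT case I would read $(\lambda,\mu)$ off the representation of $\eta$ and recover $\rho$ from $w+\eta=-\Jc(x)\rho$, and feasibility is automatic because $x\in\K$.

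The hard part is the reverse inclusion $\Nspace(\Ja(x))\subseteq\mathrm{range}(\Jc(x))$: this is precisely the structural property that lets \ref{Prob_Pen} ``re-materialize'' a multiplier $\rho$ for the dissolved constraints $c(x)=0$ when a KKT or stationary point of \ref{Prob_Pen} is pushed back to \ref{Prob_Ori}. It is not literally among the blanket assumptions on $\A$, but follows from differentiating $\A|_\M=\mathrm{id}$ as above (and is presumably already available in the development preceding Lemma \ref{Le_relationship_ACQ}); once it is in hand, everything else is bookkeeping with Proposition \ref{Prop_partial_h}, Lemma \ref{Le_relationship_ACQ}, and the convexity of the cones involved.
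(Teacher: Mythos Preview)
Your proposal is correct and follows essentially the same route as the paper: both rely on Proposition~\ref{Prop_partial_h} to rewrite $\partial h(x)$, $\nabla\tilde u_i(x)$, $\nabla\tilde v_j(x)$ as $\Ja(x)$ applied to their \ref{Prob_Ori} counterparts, on Lemma~\ref{Le_relationship_ACQ} for the cone relations, and on the identity $\Nspace(\Ja(x))=\mathrm{range}(\Jc(x))$ (which the paper records as Lemma~\ref{Le_Ja_nullspace}, proved exactly as you sketch via Lemma~\ref{Le_Ja_identical}). The one organizational difference is in the implication ``KKT of \ref{Prob_Pen} $\Rightarrow$ KKT of \ref{Prob_Ori}'': the paper factors $\Ja(x)$ directly out of the \ref{Prob_Pen} multiplier equation, obtaining $0\in\Ja(x)\big(\partial f(x)+\sum_i\lambda_i\nabla u_i(x)+\sum_j\mu_j\nabla v_j(x)\big)$ and thereby keeping the \emph{same} multipliers $(\lambda,\mu)$ while recovering $\rho$ from the null space; your unified treatment routes this case through the linearizing-cone identity in Lemma~\ref{Le_relationship_ACQ} and so produces some $\eta\in\TKlin(x)^\circ$ whose multipliers need not coincide with the original $(\lambda,\mu)$. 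Both arguments are valid, but the paper's direct factoring is slightly sharper in that it exhibits the KKT correspondence at the level of multipliers rather than just existence.
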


	Applying  Euclidean optimization approaches to solve \ref{Prob_Pen} usually yields an infeasible sequence that converges to a KKT point of \ref{Prob_Ori}. Therefore, it is of great importance to analyze the relationship between the KKT points of \ref{Prob_Ori} and the  KKT points of \ref{Prob_Pen}. The following theorem illustrates that under mild conditions with sufficiently large $\beta$, any KKT point $y$ to \ref{Prob_Pen} in $\BOmegax{x}$ for some $x \in \M$ is a  KKT point of \ref{Prob_Ori}. Note that the difference between the result here and that of Theorem \ref{The_equivalence_feasible} is that we do not assume the KKT point $y \in \K$ in Theorem \ref{The_equivalence_FOSP}.

	\begin{theo}
		\label{The_equivalence_FOSP}
		For any given $x \in \M$, suppose $y \in \BOmegax{x}$ is a KKT point of \ref{Prob_Pen} with multipliers $\lambda$ and $\mu$, then it holds that 
		\begin{equation} \label{thm3.8-eq-1}
			\begin{aligned}
				&\mathrm{dist}(0, \partial_x \LCDP (y,\lambda,\mu) )\\
				&\geq \left( \frac{ \Lsc }{4(\Ma + 1)}\Big(\beta + \sum_{i\in [N_E]} \lambda_i \tau_i + \sum_{j \in [N_I]} \mu_j \gamma_j \Big) - \frac{4\La\Mxlambdamu}{\Lsc } \right)\norm{c(\y)}
			\end{aligned}
		\end{equation}
		Furthermore, suppose  
		\begin{equation}
			\beta + \sum_{i \in [N_E]} \lambda_i \tau_i + \sum_{j \in [N_I]} \mu_j \gamma_j\geq \frac{32\La(\Ma  + 1)\Mxlambdamu }{\Lsc^2 }
		\end{equation}
		holds at $y$, then $y$ is a KKT point of \ref{Prob_Ori}. 
	\end{theo}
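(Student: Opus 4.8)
I would argue in three stages. First, expand the Lagrangian subdifferential of \ref{Prob_Pen}; second, establish the quantitative bound \eqref{thm3.8-eq-1} by projecting away the ``gradient part'' of $\partial_x\LCDP(y,\lambda,\mu)$ so that only the penalty term $\Jc(y)c(y)$ survives; third, use this bound to show that a KKT point of \ref{Prob_Pen} with large enough penalty parameters must be feasible for \ref{Prob_Ori}, after which Theorem~\ref{The_equivalence_feasible} finishes the proof.

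Write $\kappa := \beta + \sum_{i\in[N_E]}\lambda_i\tau_i + \sum_{j\in[N_I]}\mu_j\gamma_j$. By Proposition~\ref{Prop_partial_h} one has $\partial_x\LCDP(y,\lambda,\mu) = \Ja(y)W + \kappa\,\Jc(y)c(y)$ with $W := \partial f(\A(y)) + \sum_i\lambda_i\nabla\ec_i(\A(y)) + \sum_j\mu_j\nabla\ic_j(\A(y))$. A routine preliminary step, using $y\in\BOmegax{x}$, the identity $\A(x^\ast)=x^\ast$ at the nearest point $x^\ast\in\mathrm{proj}(y,\M)$, the Lipschitz bound $\Ma$, and the definition of the radius of $\BOmegax{x}$, shows $y,\A(y),x^\ast\in\Theta_x$; hence every $w\in W$ obeys $\norm{w}\le\Mxlambdamu$ and $\sigma_{\min}(\Jc(x^\ast)),\sigma_{\min}(\Jc(y))\ge\tfrac12\Lsc$.

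The heart of the argument is a structural fact about $\A$ on $\M$: by Assumption~\ref{Assumption_2}, $\A(x^\ast)=x^\ast$ together with $\Ja(x^\ast)\Jc(x^\ast)=0$ forces $\Nspace(\Ja(x^\ast))=\mathrm{range}(\Jc(x^\ast))$ (the normal space of $\M$ at $x^\ast$) and $\Ja(x^\ast)\tp d=d$ for every $d$ with $\Jc(x^\ast)\tp d=0$ (the tangent space). These two properties make $\mathrm{range}(\Ja(x^\ast))$ and $\mathrm{range}(\Jc(x^\ast))$ complementary subspaces whose principal angle is bounded below in terms of $\Ma$ alone, so that $\norm{(I-P_{x^\ast})v}\ge\frac{1}{1+\Ma}\norm{v}$ for all $v\in\mathrm{range}(\Jc(x^\ast))$, where $P_{x^\ast}$ is the orthogonal projection onto $\mathrm{range}(\Ja(x^\ast))$. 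I would then fix an arbitrary $w\in W$ and apply $I-P_{x^\ast}$ to $\Ja(y)w+\kappa\Jc(y)c(y)$: since $(I-P_{x^\ast})\Ja(x^\ast)w=0$, the gradient part reduces to $(I-P_{x^\ast})(\Ja(y)-\Ja(x^\ast))w$, of norm at most $\La\,\mathrm{dist}(y,\M)\,\Mxlambdamu$; the penalty part splits as $\kappa(I-P_{x^\ast})\Jc(x^\ast)c(y)$, of norm at least $\frac{\kappa\Lsc}{2(1+\Ma)}\norm{c(y)}$, plus $\kappa(I-P_{x^\ast})(\Jc(y)-\Jc(x^\ast))c(y)$, of norm at most $\kappa\Lc\,\mathrm{dist}(y,\M)\,\norm{c(y)}$. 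Combining these with the local error bound $\mathrm{dist}(y,\M)\le\frac{4}{\Lsc}\norm{c(y)}$ on $\overline{\Omega}$ (a consequence of $\sigma_{\min}(\Jc(\cdot))\ge\tfrac12\Lsc$ and the Lipschitz constant $\Lc$), and with the fact that $\frac{\Lc}{\Lsc}\norm{c(y)}$ is small on $\BOmegax{x}$ by the choice of $\varepsilon_x$ — which absorbs the term quadratic in $\norm{c(y)}$ — one obtains $\norm{\Ja(y)w+\kappa\Jc(y)c(y)}\ge\big(\frac{\Lsc\kappa}{4(\Ma+1)}-\frac{4\La\Mxlambdamu}{\Lsc}\big)\norm{c(y)}$; taking the infimum over $w\in W$ yields \eqref{thm3.8-eq-1}.

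Finally, if $\kappa\ge\frac{32\La(\Ma+1)\Mxlambdamu}{\Lsc^2}$, then the coefficient in \eqref{thm3.8-eq-1} is at least $\frac{4\La\Mxlambdamu}{\Lsc}\ge 0$; but $y$ being a KKT point of \ref{Prob_Pen} means $0\in\partial_x\LCDP(y,\lambda,\mu)$, so the left side of \eqref{thm3.8-eq-1} is zero, forcing $\norm{c(y)}=0$, i.e.\ $y\in\M$. Then $\A(y)=y$, hence $\tilde{\ec}_i(y)=\ec_i(y)$ and $\tilde{\ic}_j(y)=\ic_j(y)$, and the feasibility part of the KKT conditions for \ref{Prob_Pen} gives $\ec(y)=0$, $\ic(y)\le 0$, so $y\in\K$; Theorem~\ref{The_equivalence_feasible}(2) then shows $y$ is a KKT point of \ref{Prob_Ori}. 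The main obstacle is the second stage: getting the constants exactly right requires the angle bound between $\mathrm{range}(\Ja(x^\ast))$ and $\mathrm{range}(\Jc(x^\ast))$, the local error bound relating $\mathrm{dist}(y,\M)$ to $\norm{c(y)}$, and careful tracking of the perturbations $\Ja(y)-\Ja(x^\ast)$ and $\Jc(y)-\Jc(x^\ast)$ and of the smallness of $\norm{c(y)}$ on $\BOmegax{x}$; once $c(y)=0$ everything is immediate.
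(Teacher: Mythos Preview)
Your proposal is correct and reaches the same conclusion, but the route differs from the paper's in the choice of the ``annihilating'' operator used to separate the gradient part $\Ja(y)W$ from the penalty part $\kappa\,\Jc(y)c(y)$.

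The paper works entirely at $y$: it multiplies any $w\in\partial_x\LCDP(y,\lambda,\mu)$ by $(\Ja(y)-I_n)$ and uses the near-idempotence of $\Ja$ near $\M$ (Lemma~\ref{Le_JA_JAIn} gives $\norm{(\Ja(y)-I_n)\Ja(y)}\le \frac{2\La(2\Ma+1)}{\Lsc}\norm{c(y)}$) to kill the gradient part up to an $O(\norm{c(y)})$ remainder, while $\Ja(y)\Jc(y)\approx 0$ leaves $\Jc(y)c(y)$ essentially intact. The factor $\tfrac{1}{\Ma+1}$ then comes for free from $\norm{\Ja(y)-I_n}\le\Ma+1$. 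You instead pass to the nearest feasible point $x^\ast$ and project orthogonally off $\mathrm{range}(\Ja(x^\ast))$; this annihilates $\Ja(x^\ast)w$ exactly but forces you to control the two Lipschitz perturbations $\Ja(y)-\Ja(x^\ast)$ and $\Jc(y)-\Jc(x^\ast)$ separately, and to derive the angle bound $\norm{(I-P_{x^\ast})v}\ge\tfrac{1}{1+\Ma}\norm{v}$ for $v\in\mathrm{range}(\Jc(x^\ast))$ from the idempotence of $\Ja(x^\ast)$. Both arguments are sound; the paper's is shorter because it avoids introducing $x^\ast$ and the associated double perturbation bookkeeping, while yours makes the complementarity of $\mathrm{range}(\Ja)$ and $\mathrm{range}(\Jc)$ on $\M$ more explicitly geometric. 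Your third stage (forcing $c(y)=0$ and invoking Theorem~\ref{The_equivalence_feasible}(2)) matches the paper exactly.
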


	\begin{rmk}
		Theorem \ref{The_equivalence_FOSP} provides a theoretical lower-bound for $\beta$ that depends on the multipliers $\lambda$ and $\mu$. In various existing Euclidean approaches, the multipliers $\{\lambda_i\}_{i\in[N_E]}$ are assumed or proved to be uniformly bounded. In these cases, we can immediately achieve a theoretical lower-bound for $\beta$. 
		
		On the other hand, when the uniform boundness of the multipliers $\{\lambda_i\}_{i\in[N_E]}$ is not guaranteed,   some existing works can estimate an upper-bound for the multipliers \cite{mangasarian1985computable}. Therefore, we can adaptively adjust $\beta$ to force it to satisfy the theoretical lower-bound in Theorem \ref{The_equivalence_FOSP}, and hence ensure that the applied optimization approach can find a KKT point of \ref{Prob_Ori}. 
		
		Furthermore, for the special cases where $N_E = 0$ (i.e., no additional equality constraints in \ref{Prob_Ori}), we can prove that the theoretical lower bound for $\beta$ in \ref{Prob_Pen} is independent of the multipliers. We put the detailed discussion in Section 3.3. 
	\end{rmk}
	
	As illustrated in \cite{xiao2022dissolving}, we know that the constraint dissolving operator $\A$ can quadratically reduce the feasibility violation of any infeasible point $y \in \BOmegax{x}$, hence $\A^{\infty}(y)$ is feasible. 
	In the following theorem, we prove the relationships  between $y$ and $\A^{\infty}(y)$ in terms of the function values and derivatives, which is of great importance in characterizing the properties of  \ref{Prob_Pen} at those infeasible points in $\BOmegax{x}$.
	\begin{theo}
		\label{The_fval_reduction}
		For any given $x \in \KA$, suppose 
		$y \in \BOmegax{x}$ is a KKT point of \ref{Prob_Pen} with multipliers $\lambda$ and $\mu$, and 
		\begin{equation}
			\beta + \sum_{i \in [N_E]} \lambda_i \tau_i + \sum_{j \in [N_I]} \mu_j \gamma_j \geq \frac{8\Mxlambdamu (\Ma +1)\Lac }{\Lsc ^3}.
		\end{equation}
		Then it holds that 
		\begin{equation}
			\begin{aligned}
				\LCDP(y, \lambda, \mu) \geq{}& \LCDP( \A(y), \lambda, \mu ),\\
				\LCDP(y, \lambda, \mu) \geq{}& \LCDP( \A^{\infty}(y), \lambda, \mu ).
			\end{aligned}
		\end{equation}
	\end{theo}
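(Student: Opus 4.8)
The plan is to fold the KKT multipliers into a single auxiliary objective and a single penalty constant. Write $\phi(z) := f(z) + \lambda\tp u(z) + \mu\tp v(z)$ and $\kappa := \beta + \sum_{i \in [N_E]}\lambda_i\tau_i + \sum_{j \in [N_I]}\mu_j\gamma_j$. Reading off Proposition \ref{Prop_partial_h}, we then have the identity $\LCDP(w,\lambda,\mu) = \phi(\A(w)) + \tfrac{\kappa}{2}\norm{c(w)}^2$ for every $w \in \bb{R}^n$, so that $\LCDP(y,\lambda,\mu) - \LCDP(\A(y),\lambda,\mu) = \big(\phi(\A(y)) - \phi(\A(\A(y)))\big) + \tfrac{\kappa}{2}\big(\norm{c(y)}^2 - \norm{c(\A(y))}^2\big)$; and since $\A^\infty(y) \in \M$ gives $\A(\A^\infty(y)) = \A^\infty(y)$ and $c(\A^\infty(y)) = 0$, also $\LCDP(y,\lambda,\mu) - \LCDP(\A^\infty(y),\lambda,\mu) = \big(\phi(\A(y)) - \phi(\A^\infty(y))\big) + \tfrac{\kappa}{2}\norm{c(y)}^2$. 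Hence both claims reduce to three ingredients: (i) $\norm{c(y)}^2 - \norm{c(\A(y))}^2 \geq \tfrac12\norm{c(y)}^2$; (ii) $\norm{\A(y) - \A(\A(y))}$ and $\norm{\A(y) - \A^\infty(y)}$ are at most a fixed multiple of $\tfrac{(\Ma+1)\Lac}{\Lsc^{3}}\norm{c(y)}^2$; and (iii) $\big|\phi(\A(y)) - \phi(w)\big| \leq \Mxlambdamu\norm{\A(y) - w}$ for $w \in \{\A(\A(y)),\A^\infty(y)\}$. Combining (i)--(iii) leaves a bracket of the form $\big(\tfrac{\kappa}{4} - c_0\,\tfrac{(\Ma+1)\Lac}{\Lsc^3}\Mxlambdamu\big)\norm{c(y)}^2$ for some absolute constant $c_0$ (with $\tfrac{\kappa}{4}$ replaced by $\tfrac{\kappa}{2}$ in the $\A^\infty$ case), which the hypothesis on $\kappa$ is designed to render nonnegative once $c_0$ is tracked.

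For (i) and (ii) I would first record the quadratic feasibility reduction of $c \circ \A$. Taking $z \in \mathrm{proj}(y,\M)$, Assumption \ref{Assumption_2} gives $\Ja(z)\Jc(\A(z)) = 0$; since $\Ja(\cdot)\Jc(\A(\cdot))$ is the transposed Jacobian of $c\circ\A$ and has modulus $\Lac$ on $\Theta_x$, integrating along $[z,y]$ yields $\norm{c(\A(y))} \leq \tfrac{\Lac}{2}\,\mathrm{dist}(y,\M)^2$. Independently, from $c(y) = c(y) - c(z)$ with $y - z \in \mathrm{range}(\Jc(z))$, $\sigma_{\min}(\Jc(z)) \geq \tfrac12\Lsc$ on $\ca{B}_{x,\rho_x}$, and the modulus $\Lc$ of $\Jc$, one obtains the error bound $\mathrm{dist}(y,\M) \leq \tfrac{C}{\Lsc}\norm{c(y)}$, hence $\norm{c(\A(y))} \leq \tfrac{C^2\Lac}{2\Lsc^2}\norm{c(y)}^2$. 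The radii defining $\Omegax{x}$ and $\BOmegax{x}$ are chosen precisely so that this quantity is small enough to force $\norm{c(\A(y))}^2 \leq \tfrac12\norm{c(y)}^2$, which is (i). Applying the error bound at $\A(y)$ gives $\mathrm{dist}(\A(y),\M) \leq \tfrac{C}{\Lsc}\norm{c(\A(y))} \leq \tfrac{C^3\Lac}{2\Lsc^3}\norm{c(y)}^2$; since $\A$ fixes $\M$ and $\norm{\Ja}\leq\Ma$ on $\Theta_x$, picking a projection of $\A(y)$ onto $\M$ yields $\norm{\A(y) - \A(\A(y))} \leq (\Ma+1)\mathrm{dist}(\A(y),\M)$, the first half of (ii). Summing this estimate along the orbit $\{\A^k(y)\}$ — whose feasibility violations decay quadratically and which converges to $\A^\infty(y) \in \M$, as recalled before Theorem \ref{The_fval_reduction} and in \cite{xiao2022dissolving}, so the geometric tail is absorbed into the constant — gives $\norm{\A(y) - \A^\infty(y)}$ of the same order, the second half of (ii).

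For (iii), $\phi = f + \lambda\tp u + \mu\tp v$ is locally Lipschitz, being the sum of the Clarke regular (hence locally Lipschitz) function $f$ and the smooth maps $u,v$; applying Lebourg's mean value theorem on the segment joining $\A(y)$ to $w$ — which stays inside $\Theta_x$ for $y \in \BOmegax{x}$, where $\norm{\partial f(\cdot)}\le\Lf$, $\norm{\Ju}\le\Mu$, $\norm{\Jv}\le\Mv$ — bounds $|\phi(\A(y))-\phi(w)|$ by $\Mxlambdamu = \Lf+\norm{\lambda}_1\Mu+\norm{\mu}_1\Mv$ times $\norm{\A(y)-w}$. Substituting (ii) into (iii) and combining with (i) as in the first paragraph yields the two inequalities. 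The main obstacle is twofold. First, the nonsmooth estimate (iii): since $f$ admits no Taylor expansion, one must argue through the mean value inequality and check that the subgradient magnitudes along the relevant segment are genuinely controlled by the constants defined over $\Theta_x$. Second — the more laborious part — one must verify that $\A(y)$, $\A(\A(y))$, $\A^\infty(y)$, all the segments connecting them, and the whole orbit $\{\A^k(y)\}$ remain inside $\Theta_x$ (in fact inside the nested neighborhoods $\BOmegax{x} \subset \Omegax{x} \subset \Theta_x$) so that the constants $\Lsc,\Ma,\Lc,\Lac,\Lf$ apply uniformly; making this radius survive the $\A$-iteration is exactly what the intricate definitions of $\rho_x$, $\varepsilon_x$, $\Omegax{x}$ and $\BOmegax{x}$ are engineered to achieve, and it is where most of the technical care goes.
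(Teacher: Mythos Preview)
Your proposal is correct and follows essentially the same route as the paper: the paper also rewrites $\LCDP(w,\lambda,\mu)$ as $\phi(\A(w))+\tfrac{\kappa}{2}\norm{c(w)}^2$ (without explicitly naming $\phi,\kappa$), bounds $|\phi(\A(y))-\phi(\A^2(y))|\le \Mxlambdamu\norm{\A(y)-\A^2(y)}$ by the mean-value inequality, and then chains Lemmas~\ref{Le_Ax_c} and~\ref{Le_A_secondorder_descrease} (which you re-derive from scratch) to arrive at the same $\tfrac{8\Mxlambdamu(\Ma+1)\Lac}{\Lsc^3}\norm{c(y)}^2$ term, balanced against the penalty drop coming from $\norm{c(\A(y))}\le\tfrac12\norm{c(y)}$. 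The only differences are cosmetic: the paper quotes the preliminary lemmas from \cite{xiao2022dissolving} rather than re-proving them, and it leaves the $\A^\infty$ inequality implicit (iterate the one-step estimate and pass to the limit), whereas you sum the orbit displacements directly and explicitly flag the domain-control issue that the paper glosses over.
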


	\subsection{Inequality Constrained Case}
	In this subsection, we consider the special case of \ref{Prob_Ori} where the additional constraints $u(x) = 0$ are absent. More precisely, we analyze the equivalence between \ref{Prob_Ori} and \ref{Prob_Pen} under the following assumption. 
	\begin{assumpt}
		\label{Assumption_Ineq}
		$N_E = 0$ in \ref{Prob_Ori}. 
	\end{assumpt} 
	
	As the additional equality constraints $u(x) = 0$ are absent, the Lagrangian of \ref{Prob_Pen} is denoted as $\LCDP(x, \mu)$ for simplicity in this subsection. Moreover, we make the following condition on the threshold value for $\beta$ and $\{\gamma_j\}$ for a given $x \in \M$, which is independent to the corresponding multiplier $\mu$.
	\begin{cond}
		\label{Cond_beta_gamma}
		Given any $x \in \M$, the parameters $\beta$ and $\{\gamma_j\}_{j \in [N_I]}$ in \ref{Prob_Pen} satisfy the following inequalities,
		\begin{equation}
			\beta \geq \frac{64\Lf(\Ma +1)(\Lac + \Lsc \La) }{\Lsc ^3}, \quad  \inf_{j \in [N_I]}~\gamma_j \geq \frac{32\La \Mv(\Ma + 1)}{\Lsc^2}. 
		\end{equation}
	\end{cond}

	\begin{rmk}
		When $\M$ is compact, there exists a finite set $\ca{I}\subset \M$ such that $\M \subseteq \bigcup_{x \in \ca{I}} \Theta_x$. Therefore, we can choose uniform positive lower bounds for $\Lsc$ and $\varepsilon_x$, while finding uniform upper bounds for all the other aforementioned constants. Specifically, when $\M$ is compact, we can choose a uniform upper bound for the penalty parameters $\beta$ and $\{\gamma_j\}$ for \ref{Prob_Pen} under Assumption \ref{Assumption_Ineq}. 
	\end{rmk}

	We first study the equivalence on constraint qualifications. The following lemma illustrates that for any $x \in \K$ with $\beta$ and $\{\gamma_j\}$ satisfying Condition \ref{Cond_beta_gamma}, we can obtain sharper results on the relationship between $\TK(x)$ and $\TKA(x)$. The main results of this subsection is summarized in Figure \ref{Figure_Stat_Ineq}.  The proof for Lemma \ref{Le_relationship_ACQ_ineq}, Theorem \ref{The_equivalence_feasible_Ineq}, Theorem \ref{The_equivalence_Ineq_FOSP}, and Corollary \ref{Coro_equivalence_Ineq_KKT} can be found in Section \ref{Subsection_proofs_ineqs}. 
	
	\begin{figure}[htbp]
		\begin{equation*}
			\begin{tikzcd}[row sep=normal,column sep=huge]
				\text{FOSPs of \ref{Prob_Ori}} \arrow[d, Rightarrow]  \arrow[r, Leftrightarrow, "\text{Theo. \ref{The_equivalence_feasible_Ineq} }"] &\text{FOSPs of \ref{Prob_Pen}}  \arrow[d, Rightarrow]\\
				\text{KKTs of \ref{Prob_Ori}}  \arrow[r, Leftrightarrow, "\text{Theo. \ref{The_equivalence_feasible} }", "\text{Coro. \ref{Coro_equivalence_Ineq_KKT} }" swap] &\text{KKTs of \ref{Prob_Pen}}\\
			\end{tikzcd}
		\end{equation*}
		\vspace{-10mm}
		\caption{Relationships on  the first-order stationary points (FOSPs) and KKT points (KKTs) between \ref{Prob_Ori} and \ref{Prob_Pen} under Assumption \ref{Assumption_Ineq}. Here ``Theo.'' is the abbreviation for ``Theorem'', while ``Coro.'' is the abbreviation for ``'Corollary''.   }
		\label{Figure_Stat_Ineq}
	\end{figure}
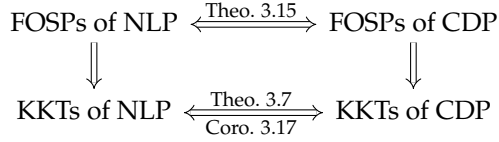

	\begin{lem}
		\label{Le_relationship_ACQ_ineq}
		Suppose Assumption \ref{Assumption_Ineq} holds, and the penalty parameters $\beta$ and $\{\gamma_j\}$ in \ref{Prob_Pen} satisfy Condition \ref{Cond_beta_gamma},  then it holds for any $x \in \K$ that 
		\begin{align}
			&\TK(x) = \Ja(x)\tp \TKA(x),\\
			&\TKA(x)^\circ \cap \mathrm{range}(\Ja(x)) = \Ja(x)\TK(x)^\circ.
		\end{align}
	\end{lem}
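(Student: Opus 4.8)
The plan is to reduce both identities to a single reverse inclusion. By the inclusions already recorded in Lemma~\ref{Le_relationship_ACQ}, namely $\TK(x)\subseteq\Ja(x)\tp\TKA(x)$ and $\TKA(x)^\circ\cap\mathrm{range}(\Ja(x))\subseteq\Ja(x)\TK(x)^\circ$, it suffices to establish $\Ja(x)\tp\TKA(x)\subseteq\TK(x)$. Granting this, the first identity $\TK(x)=\Ja(x)\tp\TKA(x)$ is immediate. For the second, take any $w\in\TK(x)^\circ$ and any $d'\in\TKA(x)$; then $\Ja(x)\tp d'\in\Ja(x)\tp\TKA(x)=\TK(x)$, so $\inner{\Ja(x)w,d'}=\inner{w,\Ja(x)\tp d'}\le 0$, which shows $\Ja(x)w\in\TKA(x)^\circ\cap\mathrm{range}(\Ja(x))$; together with the reverse inclusion from Lemma~\ref{Le_relationship_ACQ} this gives $\TKA(x)^\circ\cap\mathrm{range}(\Ja(x))=\Ja(x)\TK(x)^\circ$.

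To prove $\Ja(x)\tp\TKA(x)\subseteq\TK(x)$, fix $d'\in\TKA(x)$ and choose $y_k\in\KA$ with $y_k\to x$, $t_k\downarrow 0$ and $(y_k-x)/t_k\to d'$. Since $x\in\M$ lies in the interior of $\overline{\Omega}$, we have $y_k\in\BOmegax{x}$ for all large $k$, so by the properties of the constraint dissolving mapping recalled before Theorem~\ref{The_fval_reduction} (see \cite{xiao2022dissolving}) the iterates converge to $z_k:=\A^{\infty}(y_k)\in\M$, and there are constants $C_1,C_2>0$ depending only on the constants defined above (and not on the penalty parameters) with $\norm{c(\A(y_k))}\le C_1\norm{c(y_k)}^2$ and $\norm{z_k-\A(y_k)}\le C_2\norm{c(\A(y_k))}$. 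I would then verify two facts about $z_k$.

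\emph{Feasibility for \ref{Prob_Ori}.} Since $z_k\in\M$ and $N_E=0$ by Assumption~\ref{Assumption_Ineq}, it is enough to check $\ic_j(z_k)\le 0$ for all $j$. For $j\notin\ca{F}(x)$ this holds for large $k$ from $\ic_j(x)<0$, $z_k\to x$ and continuity. For $j\in\ca{F}(x)$, feasibility of $y_k$ for \ref{Prob_Pen} gives $\ic_j(\A(y_k))\le-\tfrac{\gamma_j}{2}\norm{c(y_k)}^2$, while $|\ic_j(z_k)-\ic_j(\A(y_k))|\le\Mv\norm{z_k-\A(y_k)}\le\Mv C_1C_2\norm{c(y_k)}^2$; hence $\ic_j(z_k)\le\big(\Mv C_1C_2-\tfrac{\gamma_j}{2}\big)\norm{c(y_k)}^2\le 0$ as soon as $\inf_j\gamma_j\ge 2\Mv C_1C_2$, which is ensured by Condition~\ref{Cond_beta_gamma}. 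Thus $z_k\in\K$ for large $k$. \emph{Limiting direction.} Writing $z_k-x=(z_k-\A(y_k))+(\A(y_k)-\A(x))$ and using $c(x)=0$ together with $\norm{c(y_k)}\le\Mc\norm{y_k-x}=O(t_k)$, the first summand is $O(\norm{c(y_k)}^2)=o(t_k)$; by the differentiability of $\A$ at $x$ (Assumption~\ref{Assumption_2}) the second summand equals $\Ja(x)\tp(y_k-x)+o(\norm{y_k-x})=t_k\,\Ja(x)\tp d'+o(t_k)$. Therefore $z_k\to x$, $z_k\in\K$, and $(z_k-x)/t_k\to\Ja(x)\tp d'$, so $\Ja(x)\tp d'\in\TK(x)$, which is the desired inclusion.

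I expect the feasibility step to be the main obstacle: one has to confirm that the correction $z_k-\A(y_k)$ accumulated over the remaining iterations of $\A$ is genuinely of order $\norm{c(y_k)}^2$ — which means summing the geometric tail $\sum_{k}\norm{\A^{k+1}(y_k)-\A^{k}(y_k)}$ with the help of the quadratic feasibility reduction — and that the resulting product $2\Mv C_1C_2$ is dominated by the threshold $\tfrac{32\La\Mv(\Ma+1)}{\Lsc^2}$ imposed in Condition~\ref{Cond_beta_gamma}. This is a matter of carefully reassembling the explicit estimates from Section~2.3 and \cite{xiao2022dissolving}; the remaining expansions in the limiting-direction step are routine.
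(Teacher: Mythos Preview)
Your overall strategy coincides with the paper's: reduce to proving $\Ja(x)\tp\TKA(x)\subseteq\TK(x)$, take a sequence $y_k\in\KA$ realizing a tangent direction $d$, pass to $z_k:=\A^{\infty}(y_k)\in\M$, verify $z_k\in\K$, and compute $\lim(z_k-x)/t_k=\Ja(x)\tp d$. The paper then derives the polar-cone identity from Lemma~\ref{Le_subspace_polar_cone} rather than by your direct pairing argument, but both are fine. For the limiting direction the paper invokes the clean identity $J_{\A^{\infty}}(x)=\Ja(x)$ (Lemma~\ref{Le_JAinf}) in one stroke, which shortcuts your two-term expansion; your expansion is correct but more laborious.

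The substantive difference is in the feasibility step. The paper does \emph{not} compare an explicit constant against the $\gamma_j$ threshold. Instead it invokes Proposition~\ref{Prop_postprocessing} (quoted from \cite{xiao2022dissolving}), applied at $\A(y_k)$, to obtain
\[
v_j(\A^{\infty}(y_k)) \;\le\; v_j(\A(y_k)) + \tfrac{\gamma_j}{4}\norm{c(\A(y_k))}^2 \;\le\; v_j(\A(y_k)) + \tfrac{\gamma_j}{16}\norm{c(y_k)}^2 \;\le\; \tilde v_j(y_k)\le 0,
\]
so that feasibility follows from the trivial inequality $\gamma_j/16\le\gamma_j/2$, with Condition~\ref{Cond_beta_gamma} consumed entirely inside the proof of Proposition~\ref{Prop_postprocessing}. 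Your route, by contrast, produces the constant $2\Mv C_1C_2$ with $C_1=4\Lac/\Lsc^{2}$ and $C_2=4(\Ma+1)/\Lsc$, i.e.\ a threshold $32\Mv\Lac(\Ma+1)/\Lsc^{3}$. This is \emph{not} the same shape as the bound $32\La\Mv(\Ma+1)/\Lsc^{2}$ in Condition~\ref{Cond_beta_gamma}: you would need $\Lac\le\La\Lsc$, which does not follow from the definitions. So the hope that ``this is a matter of carefully reassembling the explicit estimates'' is too optimistic with your crude Lipschitz bound $|v_j(z_k)-v_j(\A(y_k))|\le\Mv\norm{z_k-\A(y_k)}$; the sharper statement packaged in Proposition~\ref{Prop_postprocessing} is what makes Condition~\ref{Cond_beta_gamma} sufficient. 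Either cite that proposition as the paper does, or reproduce its proof to get the $\gamma_j/4$ coefficient.
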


	In the following theorem, we use  Lemma \ref{Le_relationship_ACQ_ineq} to obtain a sharper result on the equivalence of first-order stationary points between \ref{Prob_Ori} and \ref{Prob_Pen}, in the sense that the lower bound for  $\beta$ and $\{\gamma_j\}$ are independent of the multiplier $\mu$. 
	\begin{theo}
		\label{The_equivalence_feasible_Ineq}
		For any given $x \in \K$, suppose Assumption \ref{Assumption_Ineq} holds, and the penalty parameters $\beta$ and $\{\gamma_j\}$ in \ref{Prob_Pen} satisfy Condition \ref{Cond_beta_gamma}. Then $x$ is a first-order stationary point of \ref{Prob_Ori} if and only if $x$ is a first-order stationary point of \ref{Prob_Pen}. 
	\end{theo}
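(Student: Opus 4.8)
The plan is to establish the two implications of the equivalence separately, with essentially all of the new content residing in the ``only if'' direction. The ``if'' direction is immediate: if $x$ is a first-order stationary point of \ref{Prob_Pen}, then Theorem \ref{The_equivalence_feasible}(1) — which imposes no restriction whatsoever on the penalty parameters — already yields that $x$ is a first-order stationary point of \ref{Prob_Ori}. Hence it remains only to prove that, under Assumption \ref{Assumption_Ineq} together with Condition \ref{Cond_beta_gamma}, every first-order stationary point of \ref{Prob_Ori} is a first-order stationary point of \ref{Prob_Pen}; this is where the sharpened relation in Lemma \ref{Le_relationship_ACQ_ineq} (as opposed to the mere inclusion available in the general Lemma \ref{Le_relationship_ACQ}) enters.

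For the ``only if'' direction, I would argue directly from the definition. Let $x \in \K$ be a first-order stationary point of \ref{Prob_Ori}, so there exists $w \in \partial f(x)$ with $-w \in \TK(x)^\circ$. Since $x \in \K \subseteq \M$, Proposition \ref{Prop_partial_h} gives $\partial h(x) = \Ja(x)\partial f(x)$, so in particular $\Ja(x) w \in \partial h(x)$. The key step is to transport the normal-cone membership through the linear map $\Ja(x)$: applying the second identity of Lemma \ref{Le_relationship_ACQ_ineq} (which is valid precisely because $\beta$ and $\{\gamma_j\}$ satisfy Condition \ref{Cond_beta_gamma}) one obtains
\[
\Ja(x)(-w) \in \Ja(x)\TK(x)^\circ = \TKA(x)^\circ \cap \mathrm{range}(\Ja(x)) \subseteq \TKA(x)^\circ .
\]
Adding the two memberships, $0 = \Ja(x) w + \Ja(x)(-w) \in \partial h(x) + \TKA(x)^\circ$; and since $\K \subseteq \KA$ we have $x \in \KA$, so $x$ is indeed a first-order stationary point of \ref{Prob_Pen}.

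The argument is short only because the work has been front-loaded into Lemma \ref{Le_relationship_ACQ_ineq}: it is exactly the equality $\TKA(x)^\circ \cap \mathrm{range}(\Ja(x)) = \Ja(x)\TK(x)^\circ$ — rather than a one-sided inclusion — that lets the conclusion hold with a threshold on $\beta$ and $\{\gamma_j\}$ that does not involve the multiplier. The one point needing a little care is that the stationarity condition, written as $0 \in \partial f(x) + \TK(x)^\circ$, is preserved under $\Ja(x)$ only because $\partial h(x)$ is literally the image set $\Ja(x)\partial f(x)$ at points of $\M$; thus one must cite Proposition \ref{Prop_partial_h} and not merely a subset relation. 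I do not expect any genuine obstacle beyond chaining these set identities correctly and verifying $x \in \KA$.
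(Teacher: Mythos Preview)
Your proof is correct and follows essentially the same approach as the paper: both directions invoke exactly the same ingredients (Theorem \ref{The_equivalence_feasible}(1) for ``if'', and Lemma \ref{Le_relationship_ACQ_ineq} together with $\partial h(x) = \Ja(x)\partial f(x)$ for ``only if''), with the only cosmetic difference being that you work elementwise with a chosen $w \in \partial f(x)$ whereas the paper carries the computation through at the level of set inclusions.
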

	
	Based on Theorem \ref{The_equivalence_feasible_Ineq}, we prove that the equivalence of KKT points in Theorem \ref{The_equivalence_FOSP} can be further improved to the equivalence of first-order stationary points. 
	\begin{theo}
		\label{The_equivalence_Ineq_FOSP}
		For any given $x \in \K$, suppose Assumption \ref{Assumption_Ineq} holds,  the penalty parameters $\beta$ and $\{\gamma_j\}$ in \ref{Prob_Pen} satisfy Condition \ref{Cond_beta_gamma}. If $y \in \BOmegax{x}$ is a first-order stationary point of \ref{Prob_Pen}, then $y$ is a first-order stationary point of \ref{Prob_Ori}. 
	\end{theo}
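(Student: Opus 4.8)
The plan is to reduce the statement to a KKT-type argument at the feasible point $\A^{\infty}(y)$ and then invoke Theorem \ref{The_equivalence_feasible_Ineq}. First I would observe that under Assumption \ref{Assumption_Ineq} a first-order stationary point $y$ of \ref{Prob_Pen} in $\BOmegax{x}$ automatically produces Lagrange multipliers $\mu \geq 0$ with $\mu_j = 0$ for $j \notin \ca{F}_A(y)$: this follows because $y \in \BOmegax{x}$ lies in the regime where the constraint qualification machinery of Proposition \ref{Prop_CQ_equivalence_GCQ} and Theorem \ref{The_CQ_fin} applies, so $0 \in \partial h(y) + \TKA(y)^\circ$ together with the explicit polar-cone description $(\TKAlin(y))^\circ$ yields a KKT representation $0 \in \partial_x \LCDP(y,\mu)$ with the complementarity and sign conditions. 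Thus $y$ is in fact a KKT point of \ref{Prob_Pen}, not merely a first-order stationary point.

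Next I would apply Theorem \ref{The_equivalence_FOSP} (specialized to $N_E = 0$): since $y \in \BOmegax{x}$ is a KKT point of \ref{Prob_Pen} and Condition \ref{Cond_beta_gamma} forces $\beta + \sum_{j} \mu_j \gamma_j$ above the required threshold $\tfrac{32\La(\Ma+1)\Mxlambdamu}{\Lsc^2}$ — here using $\Mxlambdamu = \Lf + \norm{\mu}_1 \Mv$ and the bound $\inf_j \gamma_j \geq \tfrac{32\La\Mv(\Ma+1)}{\Lsc^2}$ to absorb the $\norm{\mu}_1\Mv$ term, while $\beta \geq \tfrac{64\Lf(\Ma+1)(\Lac+\Lsc\La)}{\Lsc^3}$ handles the $\Lf$ term — we conclude from \eqref{thm3.8-eq-1} that $c(y) = 0$, hence $y \in \K$ and $y$ is a KKT point of \ref{Prob_Ori}. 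At this stage $y$ is feasible, so $\A^{\infty}(y) = y$ by Assumption \ref{Assumption_2}(2).

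Finally, with $y \in \K$ established, I would invoke Theorem \ref{The_equivalence_feasible_Ineq} directly: it states that for $x \in \K$ under Assumption \ref{Assumption_Ineq} and Condition \ref{Cond_beta_gamma}, first-order stationarity for \ref{Prob_Pen} and for \ref{Prob_Ori} are equivalent. Applying it with $x$ replaced by $y$ gives that $y$ is a first-order stationary point of \ref{Prob_Ori}, which is the claim.

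The main obstacle I anticipate is the first step — rigorously extracting the multiplier $\mu$ and the bound $\norm{\mu}_1$ at a point $y$ that need not be feasible. The subtlety is that Condition \ref{Cond_beta_gamma} is stated in terms of the constants attached to the base point $x \in \M$, while the stationarity and complementarity live at $y \in \BOmegax{x}$; one must use the uniform control $\BOmegax{x} \subset \Omegax{x} \subset \Theta_x$ together with the singular-value lower bound $\sigma_{\min}(\Jc(y)) \geq \tfrac12\Lsc$ on $\Theta_x$ to transfer the threshold inequalities from $x$ to $y$, and to ensure the polar-cone identity of Proposition \ref{Prop_CQ_equivalence_GCQ} is available at $y$ (which requires GCQ for \ref{Prob_Ori} at the nearby feasible point, a consequence of LICQ holding on $\M$ via Assumption \ref{Assumption_1}(3)). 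Once the multiplier bound is in hand the rest is a bookkeeping chain through the already-proven theorems.
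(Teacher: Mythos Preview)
Your first step is the problem, and you correctly flag it as the main obstacle --- but your proposed fix does not go through. Proposition~\ref{Prop_CQ_equivalence_GCQ} and Theorem~\ref{The_CQ_fin} are stated and proved only for points of $\K$; their proofs rely on identities such as $\Ja(x)\Jc(x)=0$ and $\Ja(x)^2=\Ja(x)$ (Lemmas~\ref{Le_Ja_nullspace}--\ref{Le_Ja_idempotent}) that hold only on $\M$. There is no mechanism in the paper, and you do not supply one, to upgrade $0\in\partial h(y)+\TKA(y)^\circ$ to a KKT representation at a point $y\in\BOmegax{x}\setminus\M$; no constraint qualification for \ref{Prob_Pen} at such $y$ is ever established. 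The appeal to LICQ for \ref{Prob_Ori} on $\M$ via Assumption~\ref{Assumption_1}(3) says nothing about the linearized or tangent cone of $\KA$ at a possibly infeasible $y$. Without the multipliers $\mu$ in hand, the threshold verification for Theorem~\ref{The_equivalence_FOSP} cannot even be formulated, and your chain collapses.

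The paper's proof avoids this entirely by never passing through a KKT description at $y$. It works directly with the polar cone $\TKA(y)^\circ$: one constructs the explicit direction $\tilde d=(\Ja(y)^\top-I_n)\Jc(y)c(y)$, checks $\langle\tilde d,\nabla\tilde v_j(y)\rangle<0$ for every $j$ (using only Condition~\ref{Cond_beta_gamma} on the $\gamma_j$), so that $\tilde d\in\TKA(y)$; then from stationarity there is $w\in\partial f(\A(y))$ with $-(\Ja(y)w+\beta\Jc(y)c(y))\in\TKA(y)^\circ$, and pairing with $\tilde d$ forces $\|\Jc(y)c(y)\|=0$. Only after $c(y)=0$ is established does the argument return to feasible-point machinery (Theorem~\ref{The_equivalence_feasible_Ineq}). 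The key idea you are missing is this direct descent-direction construction, which makes any multiplier extraction unnecessary.
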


	Corollary \ref{Coro_equivalence_Ineq_KKT} illustrates that for any $x \in \K$, \ref{Prob_Ori} and \ref{Prob_Pen} have the same KKT points in $\BOmegax{x}$, where the lower bound for $\beta$ and $\{\gamma_j\}$ are independent of the multiplier  $\mu$. 
	\begin{coro}
		\label{Coro_equivalence_Ineq_KKT}
		For any given $x \in \K$, suppose Assumption \ref{Assumption_Ineq} holds,  the penalty parameters $\beta$ and $\{\gamma_j\}$ in \ref{Prob_Pen} satisfy Condition \ref{Cond_beta_gamma}. Then $y \in \BOmegax{x}$ is a KKT point of \ref{Prob_Pen} if and only if $y$ is a KKT point of \ref{Prob_Ori}. 
	\end{coro}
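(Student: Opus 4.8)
The plan is to combine the two directions already established in the stationarity section, specializing them to the inequality-constrained setting ($N_E = 0$) where Condition \ref{Cond_beta_gamma} gives multiplier-independent thresholds. First I would prove the ``only if'' direction. Suppose $y \in \BOmegax{x}$ is a KKT point of \ref{Prob_Pen} with multiplier $\mu$. Since $N_E = 0$, the Lagrangian is $\LCDP(y,\mu)$, and the KKT conditions give $0 \in \partial_x \LCDP(y,\mu)$ together with $\tilde{v}(y) \le 0$, $\mu \ge 0$, $\mu\tp \tilde{v}(y) = 0$. The key observation is that Condition \ref{Cond_beta_gamma} is designed precisely so that the hypotheses of Theorem \ref{The_equivalence_FOSP} and Theorem \ref{The_fval_reduction} are met \emph{without} needing to control $\mu$: indeed, with $\beta \ge \frac{64\Lf(\Ma+1)(\Lac + \Lsc\La)}{\Lsc^3}$ and $\inf_j \gamma_j \ge \frac{32\La\Mv(\Ma+1)}{\Lsc^2}$, one checks that $\beta + \sum_j \mu_j\gamma_j \ge \frac{32\La(\Ma+1)\Mxlambdamu}{\Lsc^2}$ holds for every $\mu \ge 0$ satisfying the complementarity condition (here $\Mxlambdamu = \Lf + \norm{\mu}_1\Mv$ since $N_E=0$), because the $\gamma_j$ term dominates the $\norm{\mu}_1 \Mv$ contribution to $\Mxlambdamu$ while the $\beta$ term covers the constant $\Lf$ part. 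Hence Theorem \ref{The_equivalence_FOSP} applies and $y$ is a KKT point of \ref{Prob_Ori}; in particular $c(y) = 0$, so $y \in \M$ and $y \in \K$.

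For the ``if'' direction, suppose $y \in \BOmegax{x}$ is a KKT point of \ref{Prob_Ori} with multiplier $\mu$. Then $c(y) = 0$, $v(y) \le 0$, $\mu \ge 0$, $\mu\tp v(y) = 0$, so $y \in \M \cap \BOmegax{x} \subseteq \K$. Now invoke the second part of Theorem \ref{The_equivalence_feasible}, which states precisely that for $x \in \K$, $x$ is a KKT point of \ref{Prob_Ori} if and only if it is a KKT point of \ref{Prob_Pen}; applying it at the point $y$ gives that $y$ is a KKT point of \ref{Prob_Pen}. (Here one uses that on $\M$ we have $\tilde{v}_j(y) = v_j(y)$, $\nabla\tilde{v}_j(y) = \Ja(y)\nabla v_j(y)$, and $\partial h(y) = \Ja(y)\partial f(y)$ by Proposition \ref{Prop_partial_h}, so the complementarity and stationarity conditions transfer directly.) Combining the two directions yields the stated equivalence.

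The main obstacle I anticipate is the bookkeeping in the ``only if'' direction: verifying that Condition \ref{Cond_beta_gamma} genuinely implies the multiplier-dependent threshold of Theorem \ref{The_equivalence_FOSP} for \emph{all} admissible $\mu$ simultaneously. The point is that the quantity $\Mxlambdamu$ itself grows linearly with $\norm{\mu}_1$, so one cannot simply ``choose $\beta$ large'': instead one must split $\frac{32\La(\Ma+1)\Mxlambdamu}{\Lsc^2} = \frac{32\La(\Ma+1)\Lf}{\Lsc^2} + \frac{32\La(\Ma+1)\Mv}{\Lsc^2}\norm{\mu}_1$ and bound the first summand by the $\beta$-part of Condition \ref{Cond_beta_gamma} and the second by $\sum_j \mu_j \gamma_j \ge (\inf_j \gamma_j)\norm{\mu}_1 \ge \frac{32\La\Mv(\Ma+1)}{\Lsc^2}\norm{\mu}_1$, using $\mu \ge 0$. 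A similar split handles the threshold in Theorem \ref{The_fval_reduction}. Once this inequality-chasing is done carefully, the corollary follows immediately by chaining the cited theorems, so the remaining steps are routine.
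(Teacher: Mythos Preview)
Your argument is correct, but it takes a different route from the paper's. For the ``only if'' direction you reduce to Theorem~\ref{The_equivalence_FOSP} by verifying that Condition~\ref{Cond_beta_gamma} implies its multiplier-dependent threshold for every $\mu\geq 0$: splitting $\Mxlambdamu=\Lf+\norm{\mu}_1\Mv$ and bounding the $\Lf$-part by $\beta$ and the $\norm{\mu}_1\Mv$-part by $\sum_j\mu_j\gamma_j\geq(\inf_j\gamma_j)\norm{\mu}_1$. This is clean and makes transparent why Condition~\ref{Cond_beta_gamma} has exactly the form it does. The paper instead argues directly at the KKT point $y$: it takes the descent direction $\tilde d=(\Ja(y)^\top-I_n)\Jc(y)c(y)$, shows $\langle\tilde d,\nabla\tilde v_j(y)\rangle\leq 0$ for all $j$ (so $\tilde d\in\TKAlin(y)$), and then pairs $\tilde d$ with the KKT stationarity equation to force $c(y)=0$, after which Theorem~\ref{The_equivalence_feasible} finishes. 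Your approach is more modular and exposes the role of Condition~\ref{Cond_beta_gamma} as a uniform-in-$\mu$ strengthening of the threshold in Theorem~\ref{The_equivalence_FOSP}; the paper's approach is more self-contained and recycles the estimates already established for Theorem~\ref{The_equivalence_Ineq_FOSP}. One small slip: in your ``if'' direction you write $y\in\M\cap\BOmegax{x}\subseteq\K$, but that set inclusion is false in general; what you actually use (and what is correct) is that $c(y)=0$ together with $v(y)\leq 0$ and $N_E=0$ gives $y\in\K$ directly.
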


	The next corollary follows directly from Theorem \ref{The_fval_reduction}, hence  we omit its proof for simplicity.  
	
	\begin{coro}
		\label{Coro_fval_reduction_appendix}
		For any given $x \in \KA$, suppose Assumption \ref{Assumption_Ineq} holds,  the penalty parameters $\beta$ and $\{\gamma_j\}$ in \ref{Prob_Pen} satisfy Condition \ref{Cond_beta_gamma}. If $y \in \BOmegax{x}$ is a  KKT point of \ref{Prob_Pen} with multiplier  $\mu$, then it holds that 
		\begin{equation}
			\begin{aligned}
				\LCDP(y, \mu) \geq{}& \LCDP( \A(y), \mu ),\\
				\LCDP(y, \mu) \geq{}& \LCDP( \A^{\infty}(y), \mu ).
			\end{aligned}
		\end{equation}
	\end{coro}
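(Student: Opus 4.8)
The plan is to obtain Corollary \ref{Coro_fval_reduction_appendix} as the $N_E=0$ specialization of Theorem \ref{The_fval_reduction}, so that the entire argument reduces to checking that the hypotheses of the corollary imply those of the theorem. Under Assumption \ref{Assumption_Ineq} the equality constraints $\tilde{u}$ are absent, hence $\LCDP(y,\lambda,\mu)$ reduces to $\LCDP(y,\mu)=h(y)+\mu\tp\tilde{v}(y)$, the term $\sum_{i\in[N_E]}\lambda_i\tau_i$ is an empty sum, and the constant $\Mxlambdamu$ collapses to $M_{x,\mu}:=\Lf+\norm{\mu}_1\Mv$. Consequently the only substantive point is to verify the penalty-parameter inequality demanded by Theorem \ref{The_fval_reduction}, namely
\[
\beta+\sum_{j\in[N_I]}\mu_j\gamma_j \;\ge\; \frac{8\,M_{x,\mu}\,(\Ma+1)\,\Lac}{\Lsc^{3}}\;=\;\frac{8(\Ma+1)\Lac}{\Lsc^{3}}\Big(\Lf+\norm{\mu}_1\Mv\Big).
\]
Once this holds, Theorem \ref{The_fval_reduction} applies verbatim with $\lambda$ vacuous and yields both $\LCDP(y,\mu)\ge\LCDP(\A(y),\mu)$ and $\LCDP(y,\mu)\ge\LCDP(\A^{\infty}(y),\mu)$, which is precisely the claimed conclusion.

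To verify the displayed inequality I would split $M_{x,\mu}=\Lf+\norm{\mu}_1\Mv$ and bound the two parts separately using the two halves of Condition \ref{Cond_beta_gamma}. For the $\Lf$-part, the bound $\beta\ge\frac{64\Lf(\Ma+1)(\Lac+\Lsc\La)}{\Lsc^{3}}\ge\frac{8\Lf(\Ma+1)\Lac}{\Lsc^{3}}$ already dominates $\frac{8(\Ma+1)\Lac\Lf}{\Lsc^{3}}$ with a wide margin. For the $\norm{\mu}_1$-part, $\mu\ge 0$ gives $\sum_{j\in[N_I]}\mu_j\gamma_j\ge(\inf_{j}\gamma_j)\norm{\mu}_1\ge\frac{32\La\Mv(\Ma+1)}{\Lsc^{2}}\norm{\mu}_1$, so it remains to compare $\frac{32\La\Mv(\Ma+1)}{\Lsc^{2}}$ with $\frac{8\Mv(\Ma+1)\Lac}{\Lsc^{3}}$, i.e. to check that $\Lac\le 4\La\Lsc$ on $\BOmegax{x}$; adding the two estimates then produces the required lower bound on $\beta+\sum_j\mu_j\gamma_j$.

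The main obstacle is exactly this last constant comparison, namely reconciling the $\Lac/\Lsc^{3}$-scaling in the threshold of Theorem \ref{The_fval_reduction} with the $\La/\Lsc^{2}$-scaling of the $\gamma_j$-bound hard-wired into Condition \ref{Cond_beta_gamma}. I would handle it with the elementary Lipschitz estimate behind the definition of $\Lac$: writing
\[
\Ja(y)\Jc(\A(y))-\Ja(z)\Jc(\A(z))=\big(\Ja(y)-\Ja(z)\big)\Jc(\A(y))+\Ja(z)\big(\Jc(\A(y))-\Jc(\A(z))\big)
\]
and using $\norm{\A(y)-\A(z)}\le\Ma\norm{y-z}$ yields $\Lac\le\La\Mc+\Ma^{2}\Lc$, after which the relation $\Lsc\le\Mc$, the calibration of $\varepsilon_x$ (which keeps the relevant radii small relative to $\Lc$ and to $\Lsc^{2}/(\Lac\Mc)$), and the slack already present in the two inequalities of Condition \ref{Cond_beta_gamma} can be combined to close the comparison on $\BOmegax{x}$. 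Should a literal comparison of the thresholds prove awkward, the cleanest alternative is simply to re-run the proof of Theorem \ref{The_fval_reduction} under Assumption \ref{Assumption_Ineq} with the explicit bounds of Condition \ref{Cond_beta_gamma} substituted from the outset; every feasibility-reduction estimate used there goes through unchanged, and the resulting thresholds reduce to exactly those of Condition \ref{Cond_beta_gamma}, so no new ideas are required — which is why the corollary may be regarded as immediate.
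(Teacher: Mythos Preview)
Your overall plan coincides with the paper's: the authors simply assert that the corollary ``follows directly from Theorem \ref{The_fval_reduction}'' and omit any verification. You have gone further by actually attempting to check that Condition \ref{Cond_beta_gamma} implies the threshold of Theorem \ref{The_fval_reduction}, and you have correctly isolated the only nontrivial step: the $\norm{\mu}_1$-part requires $\inf_j\gamma_j\ge\tfrac{8\Mv(\Ma+1)\Lac}{\Lsc^{3}}$, which under Condition \ref{Cond_beta_gamma} amounts to $\Lac\le 4\La\Lsc$.

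The gap is in your resolution of that inequality. Your Lipschitz splitting yields at best $\Lac\le\La\Mc+\Ma^{2}\Lc$ (and even this needs $\A(\Theta_x)\subseteq\Theta_x$, which is not part of the setup). Since $\Mc\ge\Lsc$ and the summand $\Ma^{2}\Lc$ is unrelated to $\La\Lsc$, this does \emph{not} give $\Lac\le 4\La\Lsc$; the calibration of $\varepsilon_x$ only controls products such as $\Lac\varepsilon_x$ or $\Lc\varepsilon_x$, not the ratio $\Lac/(\La\Lsc)$, and the numerical slack in Condition \ref{Cond_beta_gamma} cannot convert one Lipschitz constant into another. Your fallback of ``re-running the proof of Theorem \ref{The_fval_reduction}'' meets the identical obstruction: the $\Lac/\Lsc^{3}$ factor arises there from Lemma \ref{Le_A_secondorder_descrease} applied to $\norm{c(\A(y))}$ and does not reduce to an $\La/\Lsc^{2}$-type term. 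So while your strategy is exactly what the paper has in mind, neither of your two proposed fixes actually closes the constant comparison you rightly flag as the crux --- and the paper itself leaves this point unaddressed.
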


	\section{Numerical Experiments}
	In this section, we present preliminary numerical results on applying various existing efficient solvers to \ref{Prob_Ori} through the constraint dissolving approach \ref{Prob_Pen}. 
	
	\subsection{Basic settings}
	All the numerical experiments are performed on a server with Dual Intel(R) Xeon(R) Gold 6242R CPU @ 3.10GHz$\times 20$ running Python 3.8 under Ubuntu 20.04. Moreover, we choose various existing solvers that are developed based on different methods, including the interior point solvers TRCON \cite{byrd1999interior,conn2000trust} and Ipopt \cite{wachter2006implementation}, the ALM-based solver ALGENCAN \cite{andreani2008augmented}, and the solvers PSQP (\url{http://www.cs.cas.cz/~luksan/subroutines.html}) and SLSQP \cite{kraft1988software} that are developed based on sequential programming methods. We choose the initial points to be the same for all the solvers, and terminate these solvers when the running time exceeds $1200$ seconds.   The implementations details for each solver are presented below.
	\begin{itemize}
		\item ALGENCAN (version 3.1.1): ALGENCAN is a FORTRAN package that employs GENCAN  as its subroutine.  We call the ALGENCAN solver through its Python interface provided by the pyOpt package \cite{pyopt-paper}. We set the ``epsopt = $10^{-6}$'' while keeping all the other parameters as their default values. 
		\item IPOPT (version 3.14.5): Ipopt solver is written in C++ and we run it through its python interface provided in CyIpopt package. We set ``hessian\_approximation = limited-memory'' to adopt the (L)BFGS method for an approximated Hessian. Moreover, we set ``tol = $10^{-6}$'' and keep all the other parameters as their default values.  
		\item PSQP: PSQP solver is programmed in FORTRAN and we run it through the Python interface provided by the pyOpt package. We set ``TOLG = $10^{-6}$'' and keep all the other parameters as their default values. 
		\item SLSQP: SLSQP is a FORTRAN solver provided by SciPy package, together with its Python interface integrated in SciPy-optimize package.  We set ``tol = $10^{-6}$'', ``ftol = $10^{-5}$'' and keep all the other parameters as their default values. 
		\item TRCON: The solver TRCON is developed purely in Python based on the trust-region method. We set ``hess = None'' to employ the BFGS strategy to compute approximated Hessians for the objective function, fix ``gtol = $10^{-6}$'' and keep all the other parameters as their default values. 
	\end{itemize}
	It is worth mentioning that under our settings, all the tested solvers only utilize the first-order derivatives of the objective function and constraints. Moreover, we employ the automatic differentiation package autograd (\url{https://github.com/HIPS/autograd}) to compute the derivatives automatically.  In the numerical results, we report the function value obtained, as well as the feasibility and stationarity of the result $\tilde{x}$ obtained by these test solvers. Here the feasibility is measured by $\norm{u(\tilde{x})} + \norm{c(\tilde{x})} + \norm{\max\{v(\tilde{x}), 0 \}}$, and the stationarity is measured by $\norm{\nabla_x \LNLP(\tilde{x}, \tilde{\rho},\tilde{\lambda}, \tilde{\mu} )}$, where $\tilde{\rho},\tilde{\lambda},  \tilde{\mu} = \mathop{\arg\min}\limits_{\rho \in \Rp,\lambda \in \bb{R}^{N_E},  \mu \in \bb{R}^{N_I}_+} \norm{ \LNLP(\tilde{x}, \rho,\lambda, \mu )}^2$.

	\subsection{Riemannian center of mass}
	In this subsection, we test the numerical performance of our constraint dissolving approach on Example \ref{Example_Rie_mass} by applying all the tested solvers to solve \ref{Prob_Pen}, and compare it with the baselines (i.e. directly applying the tested solvers to solve \ref{Prob_Ori}). In Example \ref{Example_Rie_mass}, we choose the submanifold $\M$ as the symplectic Stiefel manifold $\ca{S}_{m,q} := \{ X\in \bb{R}^{m\times q}: X\tp Q_m X = Q_q \}$. Moreover, we choose $N = 1000$ and randomly generate $s^*$ and $\{s_i\}_{1\leq i\leq N}$ on the symplectic manifold. For the Riemannian center of mass problem in Example \ref{Example_Rie_mass}, the corresponding \ref{Prob_Pen} can be expressed as 
	\begin{equation}
		\begin{aligned}
			\min_{x \in \bb{R}^n}\quad &\frac{1}{N} \sum_{i = 1}^N \norm{\A(x)- s_i}^2 + \frac{\beta}{2}\norm{c(x)}^2\\
			\text{s.t.} \quad &\norm{\A(x) - s^*}^2 \leq r,
		\end{aligned}
	\end{equation}
	where we fix $\beta = 1$. Moreover, we set the initial point as $s^*$  in each test instance for all the compared solvers.

	The numerical results are presented in Table \ref{Table_Stiefel_mass_n}. From that table, we can conclude that applying existing solvers to \ref{Prob_Pen} is significantly more efficient than directly apply those solvers to \ref{Prob_Ori}. For some large-scale instances, the improvement could be $10$-$40$ times for some solvers.  Moreover, we notice that SLSQP solver encounters errors due to the ill-conditioned LSQ subproblems when it is directly applied to \ref{Prob_Ori}. 
	Furthermore, our results report that ALGECAN is unstable for almost all the test instances when it is directly applied to \ref{Prob_Ori}.

	\begin{table}[!htbp]
		\begin{center}
			\footnotesize
			\begin{minipage}{\textwidth}
				\caption{A comparison between \ref{Prob_Ori} and \ref{Prob_Pen} on Riemannian center of mass problems.}
				\label{Table_Stiefel_mass_n}
				\begin{tabular*}{\textwidth}{c@{\extracolsep{\fill}}cccccccccc@{\extracolsep{\fill}}}
					\toprule \midrule
					\multicolumn{2}{c}{\multirow{2}{*}{Test problems}}& \multirow{2}{*}{Solvers} &
					\multicolumn{2}{@{}c@{}}{Function value} &\multicolumn{2}{@{}c@{}}{Substationarity} & \multicolumn{2}{@{}c@{}}{Feasibility} 
					& \multicolumn{2}{@{}c@{}}{CPU time (s)} \\
					\cmidrule{4-5}\cmidrule{6-7} \cmidrule{8-9} \cmidrule{10-11}%
					&& & \ref{Prob_Pen} & \ref{Prob_Ori} & \ref{Prob_Pen} & \ref{Prob_Ori} & \ref{Prob_Pen} & \ref{Prob_Ori} & \ref{Prob_Pen} & \ref{Prob_Ori} \\ \midrule
					\multirow{20}{*}{\begin{tabular}[l]{@{}c@{}}$(q,r)=$\\ $(10,0.01)$ \end{tabular}} &\multirow{5}{*}{$m=50$} 
					& ALGENCAN & 5.44e+00 & 5.44e+00 & 4.07e-08 & 3.75e-07 & 4.77e-11 & 1.20e-09 & 0.37 & 3.39 \\
					&& IPOPT & 5.44e+00 & 5.44e+00 & 9.95e-07 & 1.68e-07 & 1.00e-08 & 1.00e-06 & 0.61 & 9.77 \\
					&& PSQP & 5.44e+00 & 5.44e+00 & 7.01e-07 & 3.46e-07 & 7.42e-11 & 3.55e-12 & 0.42 & 1.49 \\
					&& SLSQP & 5.44e+00 & - & 3.80e-07 & - & 5.87e-06 & - & 3.68 & - \\
					&& TRCON & 5.44e+00 & 5.46e+00 & 5.99e-07 & 7.06e-04 & 2.16e-12 & 1.11e-07 & 0.59 & 30.39 \\ \cline{2-11} 
					&\multirow{5}{*}{$m=100$} 
					& ALGENCAN & 5.60e+00 & - & 3.95e-07 & - & 6.88e-11 & - & 1.03 & - \\
					&& IPOPT & 5.60e+00 & 5.60e+00 & 4.88e-07 & 2.14e-07 & 1.08e-08 & 1.00e-06 & 1.02 & 30.11 \\
					&& PSQP & 5.60e+00 & 5.60e+00 & 8.46e-07 & 2.11e-07 & 2.20e-09 & 4.10e-12 & 0.56 & 4.43 \\
					&& SLSQP & 5.60e+00 & - &  4.51e-07 & - & 8.22e-06 & - & 30.93 & - \\
					&& TRCON & 5.60e+00 & 5.74e+00 & 9.32e-07 & 5.81e-02 & 1.33e-12 & 6.45e-06 & 1.11 & 45.36 \\ \cline{2-11} 
					&\multirow{5}{*}{$m=150$}
					& ALGENCAN & 5.84e+00 & - & 5.71e-07 & - & 3.77e-09 & - & 1.13 & - \\
					&& IPOPT & 5.84e+00 & 5.84e+00 & 5.48e-07 & 3.51e-07 & 9.98e-09 & 1.00e-06 & 1.57 & 75.25 \\
					&& PSQP & 5.84e+00 & 5.84e+00 & 5.32e-07 & 7.18e-07 & 5.30e-11 & 4.18e-11 & 1.58 & 10.07 \\
					&& SLSQP & 5.84e+00 & - &  6.07e-07 & - & 6.21e-06 & - & 112.83 & - \\
					&& TRCON & 5.84e+00 & 6.01e+00 & 9.83e-07 & 6.73e-02 & 1.62e-12 & 1.85e-05 & 2.53 & 63.93 \\ \cline{2-11} 
					&\multirow{5}{*}{$m=200$}
					& ALGENCAN & 5.88e+00 & - & 4.69e-07 & - & 4.99e-11 & - & 1.47 & - \\
					&& IPOPT & 5.88e+00 & 5.88e+00 & 2.64e-07 & 3.87e-08 & 1.14e-08 & 1.00e-06 & 1.91 & 57.91 \\
					&& PSQP & 5.88e+00 & 5.88e+00 & 6.70e-07 & 4.08e-07 & 1.44e-10 & 6.97e-13 & 2.61 & 17.54 \\
					&& SLSQP & 5.88e+00 & - &  6.18e-07 & - & 7.40e-06 & - & 256.97 & - \\
					&& TRCON & 5.88e+00 & 6.06e+00 & 9.54e-07 & 7.12e-02 & 2.06e-11 & 2.32e-05 & 4.89 & 104.14 \\ \hline
					\multirow{20}{*}{\begin{tabular}[l]{@{}c@{}}$(m,r)=$\\ $(100,0.01)$ \end{tabular}} 
					&\multirow{5}{*}{$q=6$} 
					& ALGENCAN & 3.42e+00 & - & 1.72e-07 & - & 2.65e-09 & - & 0.60 & - \\
					&& IPOPT & 3.42e+00 & 3.42e+00 & 2.42e-07 & 1.06e-07 & 1.00e-08 & 1.00e-06 & 0.65 & 6.33 \\
					&& PSQP & 3.42e+00 & 3.42e+00 & 2.05e-07 & 6.85e-07 & 6.24e-10 & 9.60e-12 & 0.21 & 0.66 \\
					&& SLSQP & 3.42e+00 & - &  4.05e-04 & - & 7.90e-06 & - & 3.98 & - \\
					&& TRCON & 3.42e+00 & 3.42e+00 & 7.54e-07 & 7.46e-04 & 1.42e-12 & 1.11e-15 & 0.76 & 2.74 \\ \cline{2-11}
					&\multirow{5}{*}{$q=10$} 
					& ALGENCAN & 5.60e+00 & - & 3.95e-07 & - & 6.88e-11 & - & 1.03 & - \\
					&& IPOPT & 5.60e+00 & 5.60e+00 & 4.88e-07 & 2.14e-07 & 1.08e-08 & 1.00e-06 & 1.02 & 30.11 \\
					&& PSQP & 5.60e+00 & 5.60e+00 & 8.46e-07 & 2.11e-07 & 2.20e-09 & 4.10e-12 & 0.56 & 4.43 \\
					&& SLSQP & 5.60e+00 & - &  4.51e-07 & - & 8.22e-06 & - & 30.93 & - \\
					&& TRCON & 5.60e+00 & 5.74e+00 & 9.32e-07 & 5.81e-02 & 1.33e-12 & 6.45e-06 & 1.11 & 45.36 \\ \cline{2-11} 
					&\multirow{5}{*}{$q=14$} 
					& ALGENCAN & 8.08e+00 & - & 4.09e-07 & - & 1.90e-10 & - & 1.82 & - \\
					&& IPOPT & 8.08e+00 & 8.08e+00 & 3.42e-07 & 4.52e-08 & 1.13e-08 & 1.00e-06 & 1.92 & 56.56 \\
					&& PSQP & 8.08e+00 & 8.08e+00 & 7.33e-07 & 3.03e-07 & 3.18e-11 & 1.11e-11 & 1.80 & 18.54 \\
					&& SLSQP & 8.08e+00 & - &  4.12e-07 & - & 6.21e-06 & - & 110.97 & - \\
					&& TRCON & 8.08e+00 & 8.27e+00 & 4.87e-07 & 7.78e-02 & 1.98e-12 & 4.92e-06 & 2.21 & 114.87 \\ \cline{2-11} 
					&\multirow{5}{*}{$q=18$} 
					& ALGENCAN & 9.98e+00 & - & 3.76e-07 & - & 4.77e-10 & - & 2.04 & - \\
					&& IPOPT & 9.98e+00 & 9.98e+00 & 6.74e-07 & 3.63e-08 & 1.03e-08 & 1.00e-06 & 2.33 & 86.40 \\
					&& PSQP & 9.98e+00 & 9.98e+00 & 8.64e-07 & 2.73e-07 & 1.59e-10 & 8.89e-13 & 2.34 & 52.86 \\
					&& SLSQP & 9.98e+00 & - &  1.84e-07 & - & 6.28e-06 & - & 219.27 & - \\
					&& TRCON & 9.98e+00 & 1.02e+01 & 9.72e-07 & 7.83e-02 & 1.27e-11 & 4.48e-05 & 4.13 & 293.02 \\ \hline 
					\multirow{20}{*}{\begin{tabular}[l]{@{}c@{}}$(m,q)=$\\ $(100,10)$ \end{tabular}} 
					&\multirow{5}{*}{$r=0.001$} 
					& ALGENCAN & 5.78e+00 & - & 4.75e-08 & - & 1.13e-10 & - & 1.78 & - \\
					&& IPOPT & 5.78e+00 & 5.78e+00 & 9.99e-07 & 7.01e-10 & 1.04e-08 & 1.00e-05 & 1.68 & 38.36 \\
					&& PSQP & 5.78e+00 & 5.78e+00 & 9.90e-07 & 6.45e-07 & 2.81e-10 & 2.70e-12 & 1.62 & 8.04 \\
					&& SLSQP & 5.78e+00 & - &  5.52e-07 & - & 9.97e-06 & - & 56.10 & - \\
					&& TRCON & 5.78e+00 & 5.83e+00 & 6.83e-07 & 1.11e-01 & 3.97e-12 & 2.03e-05 & 6.65 & 45.06 \\ \cline{2-11}
					&\multirow{5}{*}{$r=0.01$}
					& ALGENCAN & 5.60e+00 & - & 3.95e-07 & - & 6.88e-11 & - & 1.03 & - \\
					&& IPOPT & 5.60e+00 & 5.60e+00 & 4.88e-07 & 2.14e-07 & 1.08e-08 & 1.00e-06 & 1.02 & 30.11 \\
					&& PSQP & 5.60e+00 & 5.60e+00 & 8.46e-07 & 2.11e-07 & 2.20e-09 & 4.10e-12 & 0.56 & 4.43 \\
					&& SLSQP & 5.60e+00 & - &  4.51e-07 & - & 8.22e-06 & - & 30.93 & - \\
					&& TRCON & 5.60e+00 & 5.74e+00 & 9.32e-07 & 5.81e-02 & 1.33e-12 & 6.45e-06 & 1.11 & 45.36 \\ \cline{2-11} 
					&\multirow{5}{*}{$r=0.1$}
					& ALGENCAN & 5.35e+00 & - & 1.10e-06 & - & 9.34e-10 & - & 0.52 & - \\
					&& IPOPT & 5.35e+00 & 5.35e+00 & 8.12e-07 & 7.60e-08 & 1.00e-08 & 1.00e-07 & 0.57 & 25.64 \\
					&& PSQP & 5.35e+00 & 5.35e+00 & 3.69e-07 & 3.18e-07 & 2.00e-11 & 2.06e-10 & 0.18 & 2.19 \\
					&& SLSQP & 5.35e+00 & - &  4.92e-05 & - & 8.65e-06 & - & 7.40 & - \\
					&& TRCON & 5.35e+00 & 5.51e+00 & 8.62e-07 & 6.38e-03 & 5.49e-12 & 1.23e-05 & 0.46 & 43.00 \\ \cline{2-11}
					&\multirow{5}{*}{$r=1$}
					& ALGENCAN & 4.85e+00 & - & 6.35e-07 & - & 3.45e-09 & - & 0.82 & - \\
					&& IPOPT & 4.85e+00 & 4.85e+00 & 8.07e-07 & 1.36e-07 & 9.96e-09 & 1.06e-08 & 0.94 & 49.84 \\
					&& PSQP & 4.85e+00 & 4.85e+00 & 9.32e-07 & 6.70e-07 & 5.93e-11 & 3.51e-10 & 0.20 & 2.67 \\
					&& SLSQP & 4.85e+00 & - &  4.57e-04 & - & 6.61e-06 & - & 6.01 & - \\
					&& TRCON & 4.85e+00 & 4.90e+00 & 6.62e-07 & 2.52e-02 & 8.24e-12 & 1.06e-11 & 0.46 & 11.87 \\ \hline
					\bottomrule
				\end{tabular*}
			\end{minipage}
		\end{center}
	\end{table}

	\subsection{Minimum balanced cut for graph bisection}
	In this subsection, we perform the numerical experiments on Example \ref{Example_Graph_cut}, where we choose the matrix $L$ as the Laplacian for a given graph $\ca{G}$, i.e. $L = D - A$, where $D\in \bb{R}^{m\times m}$ is the diagonal matrix of the degree of the vertices, and $A \in \bb{R}^{m\times m}$ is the adjacency matrix. In our numerical experiments, we randomly generate the  graph $\ca{G}$ by the Erd{\"o}s-R{\~e}nyi model through the build-in function $\ca{G} = \mathrm{Erdos\_Renyi}(m,\rho)$, where any two nodes in $\ca{G}$ are connected with probability $\rho$ independently. Furthermore, we randomly generate an initial point on the Oblique manifold. 
	In our proposed approaches, we transform \eqref{Example_BC} into the following optimization problem with $q$ equality constraints in $\bb{R}^{m\times q}$,
	\begin{equation}
		\label{Example_BC_CDP}
		\begin{aligned}
			\min_{X \in \bb{R}^{m\times q}} \quad & -\frac{1}{4} \tr\left( \A(X)\tp L\A(X) \right) + \frac{\beta}{4} \norm{\mathrm{Diag}(XX\tp ) - I_m}^2\\
			\text{s.t.}\quad &  \A(X)\tp e = 0,
		\end{aligned}
	\end{equation}
	where $\A(X) = 2X\left(\mathrm{Diag}(XX\tp) + I_q\right)^{-1} \in \bb{R}^{m\times q}$ and $\beta$ is fixed as $0.05$ for all test instances. 
	
	The numerical results are presented in Table \ref{Table_balanced_cut_n}. We observe that all the solvers successfully compute a solution for \ref{Prob_Ori} and \ref{Prob_Pen}. From Table \ref{Table_balanced_cut_n}, we can conclude that applying existing solvers to \ref{Prob_Pen} is significantly more efficient than directly applying those solvers to \ref{Prob_Ori}. Again, the improvement can be over $10$ times for some of the compared solvers. Moreover, among all the solvers, it appears that PSQP has the best overall performance. Therefore, we can conclude that \ref{Prob_Pen} enables the direct implementation of existing solvers while enjoying the high efficiency from utilizing the partial manifold  structures of \ref{Prob_Ori}. Furthermore, it is worth mentioning that constructing \ref{Prob_Pen} is independent of the geometrical properties of the Riemannian manifold $\M$. By solving \ref{Prob_Ori} via \ref{Prob_Pen}, we utilize the Riemannian structures of the constraints while avoiding the needs to deal with the geometrical materials of the Riemannian manifold $\M$.  
	
	\begin{table}[!htbp]
		\begin{center}
			\footnotesize
			\begin{minipage}{\textwidth}
				\caption{A comparison between \ref{Prob_Ori} and \ref{Prob_Pen} on minimum balanced cut problems.}
				\label{Table_balanced_cut_n}
				\begin{tabular*}{\textwidth}{c@{\extracolsep{\fill}}cccccccccc@{\extracolsep{\fill}}}
					\toprule \midrule
					\multicolumn{2}{c}{\multirow{2}{*}{Test problems}}& \multirow{2}{*}{Solvers} &
					\multicolumn{2}{@{}c@{}}{Function value} &\multicolumn{2}{@{}c@{}}{Substationarity} & \multicolumn{2}{@{}c@{}}{Feasibility} 
					& \multicolumn{2}{@{}c@{}}{CPU time (s)} \\
					\cmidrule{4-5}\cmidrule{6-7} \cmidrule{8-9} \cmidrule{10-11}%
					&& & \ref{Prob_Pen} & \ref{Prob_Ori} & \ref{Prob_Pen} & \ref{Prob_Ori} & \ref{Prob_Pen} & \ref{Prob_Ori} & \ref{Prob_Pen} & \ref{Prob_Ori} \\ \midrule
					\multirow{20}{*}{\begin{tabular}[l]{@{}c@{}}$(q,\rho)=$\\ $(2,0.1)$ \end{tabular}} 
					&\multirow{5}{*}{$m=50$} 
					& ALGENCAN & -1.62e+01 & -1.62e+01 & 4.05e-07 & 5.80e-07 & 3.12e-09 & 8.06e-09 & 0.85 & 2.21 \\
					&& IPOPT & -1.62e+01 & -1.62e+01 & 5.66e-07 & 5.42e-07 & 1.16e-10 & 3.52e-12 & 0.63 & 0.62 \\
					&& PSQP & -1.62e+01 & -1.62e+01 & 6.11e-07 & 5.22e-07 & 4.49e-11 & 5.39e-11 & 0.09 & 0.61 \\
					&& SLSQP & -1.62e+01 & -1.62e+01 & 4.74e-07 & 2.38e-07 & 4.08e-11 & 8.68e-13 & 0.16 & 0.72 \\
					&& TRCON & -1.62e+01 & -1.62e+01 & 9.41e-07 & 8.80e-06 & 2.24e-12 & 1.38e-10 & 0.70 & 6.02 \\ \cline{2-11} 
					&\multirow{5}{*}{$m=100$} 
					& ALGENCAN & -3.43e+01 & -3.43e+01 & 5.27e-07 & 6.25e-07 & 2.81e-09 & 3.51e-09 & 1.12 & 6.37 \\
					&& IPOPT & -3.43e+01 & -3.43e+01 & 9.88e-07 & 5.15e-07 & 1.95e-11 & 2.35e-11 & 0.79 & 7.73 \\
					&& PSQP & -3.43e+01 & -3.43e+01 & 7.71e-07 & 9.95e-07 & 4.44e-11 & 5.50e-11 & 0.17 & 2.16 \\
					&& SLSQP & -3.43e+01 & -3.43e+01 & 6.57e-07 & 1.23e-07 & 8.26e-11 & 3.42e-13 & 0.70 & 2.04 \\
					&& TRCON & -3.43e+01 & -3.43e+01 & 9.26e-07 & 9.92e-07 & 4.93e-12 & 8.01e-13 & 0.80 & 17.63 \\ \cline{2-11} 
					&\multirow{5}{*}{$m=200$} 
					& ALGENCAN & -7.44e+01 & -7.44e+01 & 1.64e-07 & 2.92e-07 & 2.09e-09 & 9.09e-10 & 5.84 & 34.68 \\
					&& IPOPT & -7.44e+01 & -7.44e+01 & 6.20e-07 & 8.92e-07 & 4.74e-10 & 5.48e-11 & 1.83 & 85.03 \\
					&& PSQP & -7.44e+01 & -7.44e+01 & 9.31e-07 & 9.34e-07 & 2.31e-10 & 1.51e-10 & 0.38 & 24.35 \\
					&& SLSQP & -7.44e+01 & -7.44e+01 & 9.89e-07 & 1.21e-07 & 3.30e-11 & 5.18e-13 & 7.45 & 23.67 \\
					&& TRCON & -7.44e+01 & -7.42e+01 & 9.96e-07 & 1.65e-02 & 1.17e-11 & 3.18e-04 & 2.21 & 49.28 \\ \cline{2-11} 
					&\multirow{5}{*}{$m=400$} 
					& ALGENCAN & -1.63e+02 & - & 7.15e-07 & - & 1.37e-10 & - & 60.40 & - \\
					&& IPOPT & -1.63e+02 & -1.63e+02 & 9.17e-07 & 9.36e-07 & 7.83e-11 & 2.82e-10 & 8.41 & 739.39 \\
					&& PSQP & -1.63e+02 & -1.63e+02 & 8.14e-07 & 7.08e-07 & 2.12e-10 & 1.22e-10 & 1.19 & 251.36 \\
					&& SLSQP & -1.63e+02 & -1.63e+02 & 5.22e-07 & 7.00e-08 & 4.45e-11 & 2.68e-13 & 75.74 & 181.46 \\
					&& TRCON & -1.63e+02 & -1.62e+02 & 7.81e-05 & 6.09e-03 & 9.61e-08 & 1.86e-05 & 6.92 & 136.06 \\  \hline 
					\multirow{20}{*}{\begin{tabular}[l]{@{}c@{}}$(m,\rho)=$\\ $(100,0.1)$ \end{tabular}} 
					&\multirow{4}{*}{$q=2$} 
					& ALGENCAN & -3.43e+01 & -3.43e+01 & 5.27e-07 & 6.25e-07 & 2.81e-09 & 3.51e-09 & 1.12 & 6.37 \\
					&& IPOPT & -3.43e+01 & -3.43e+01 & 9.88e-07 & 5.15e-07 & 1.95e-11 & 2.35e-11 & 0.79 & 7.73 \\
					&& PSQP & -3.43e+01 & -3.43e+01 & 7.71e-07 & 9.95e-07 & 4.44e-11 & 5.50e-11 & 0.17 & 2.16 \\
					&& SLSQP & -3.43e+01 & -3.43e+01 & 6.57e-07 & 1.23e-07 & 8.26e-11 & 3.42e-13 & 0.70 & 2.04 \\
					&& TRCON & -3.43e+01 & -3.43e+01 & 9.26e-07 & 9.92e-07 & 4.93e-12 & 8.01e-13 & 0.80 & 17.63 \\ \cline{2-11} 
					&\multirow{4}{*}{$q=4$} 
					& ALGENCAN & -3.60e+01 & -3.60e+01 & 3.52e-07 & 2.61e-07 & 8.78e-09 & 3.22e-09 & 1.24 & 10.57 \\
					&& IPOPT & -3.60e+01 & -3.60e+01 & 7.42e-07 & 3.82e-07 & 1.42e-11 & 7.74e-12 & 1.27 & 40.78 \\
					&& PSQP & -3.60e+01 & -3.60e+01 & 1.07e-06 & 7.60e-07 & 9.36e-11 & 1.04e-10 & 0.34 & 5.54 \\
					&& SLSQP & -3.60e+01 & -3.60e+01 & 8.38e-07 & 1.20e-07 & 1.83e-12 & 9.12e-13 & 4.06 & 7.64 \\
					&& TRCON & -3.60e+01 & -3.60e+01 & 9.09e-07 & 1.78e-04 & 1.75e-10 & 2.83e-08 & 1.57 & 24.09 \\ \cline{2-11} 
					&\multirow{4}{*}{$q=6$} 
					& ALGENCAN & -3.81e+01 & -3.81e+01 & 2.50e-07 & 1.79e-07 & 6.34e-10 & 2.46e-09 & 1.90 & 8.76 \\
					&& IPOPT & -3.81e+01 & -3.81e+01 & 5.76e-07 & 7.24e-07 & 2.45e-10 & 1.83e-10 & 1.83 & 83.78 \\
					&& PSQP & -3.81e+01 & -3.81e+01 & 8.55e-07 & 9.49e-07 & 9.75e-11 & 1.47e-10 & 0.91 & 10.96 \\
					&& SLSQP & -3.81e+01 & -3.81e+01 & 6.33e-07 & 7.37e-08 & 2.42e-11 & 5.82e-13 & 14.35 & 22.08 \\
					&& TRCON & -3.81e+01 & -3.81e+01 & 3.85e-04 & 4.02e-05 & 2.38e-08 & 1.47e-09 & 6.54 & 30.70 \\ \cline{2-11} 
					&\multirow{4}{*}{$q=8$} 
					& ALGENCAN & -4.10e+01 & -4.10e+01 & 7.32e-08 & 8.80e-08 & 4.17e-09 & 2.07e-09 & 1.55 & 10.11 \\
					&& IPOPT & -4.10e+01 & -4.10e+01 & 8.89e-07 & 6.54e-07 & 2.87e-11 & 7.69e-12 & 1.93 & 111.52 \\
					&& PSQP & -4.10e+01 & -4.10e+01 & 9.74e-07 & 8.92e-07 & 6.41e-11 & 8.06e-13 & 1.33 & 13.38 \\
					&& SLSQP & -4.10e+01 & -4.10e+01 & 8.13e-07 & 7.13e-08 & 2.61e-11 & 3.78e-13 & 25.97 & 41.20 \\
					&& TRCON & -4.10e+01 & -4.10e+01 & 2.63e-05 & 1.49e-05 & 5.83e-11 & 1.75e-10 & 7.59 & 37.19 \\ \hline
					\multirow{16}{*}{\begin{tabular}[l]{@{}c@{}}$(m,q)=$\\ $(100,2)$ \end{tabular}} 
					&\multirow{4}{*}{$\rho=0.05$} 
					& ALGENCAN & -3.28e+01 & -3.30e+01 & 7.08e-07 & 8.31e-08 & 1.91e-09 & 2.44e-09 & 0.99 & 7.99 \\
					&& IPOPT & -3.28e+01 & -3.27e+01 & 8.61e-07 & 6.02e-07 & 1.83e-09 & 1.13e-11 & 1.77 & 11.97 \\
					&& PSQP & -3.28e+01 & -3.30e+01 & 8.63e-07 & 8.36e-07 & 9.60e-11 & 1.08e-10 & 0.26 & 2.82 \\
					&& SLSQP & -3.28e+01 & -3.30e+01 & 9.32e-07 & 1.34e-07 & 2.84e-12 & 4.58e-13 & 1.08 & 2.33 \\
					&& TRCON & -3.28e+01 & -3.30e+01 & 9.88e-07 & 3.30e-05 & 4.22e-12 & 1.80e-09 & 1.44 & 19.61 \\ \cline{2-11} 
					&\multirow{4}{*}{$\rho=0.1$} 
					& ALGENCAN & -3.43e+01 & -3.43e+01 & 5.27e-07 & 6.25e-07 & 2.81e-09 & 3.51e-09 & 1.12 & 6.37 \\
					&& IPOPT & -3.43e+01 & -3.43e+01 & 9.88e-07 & 5.15e-07 & 1.95e-11 & 2.35e-11 & 0.79 & 7.73 \\
					&& PSQP & -3.43e+01 & -3.43e+01 & 7.71e-07 & 9.95e-07 & 4.44e-11 & 5.50e-11 & 0.17 & 2.16 \\
					&& SLSQP & -3.43e+01 & -3.43e+01 & 6.57e-07 & 1.23e-07 & 8.26e-11 & 3.42e-13 & 0.70 & 2.04 \\
					&& TRCON & -3.43e+01 & -3.43e+01 & 9.26e-07 & 9.92e-07 & 4.93e-12 & 8.01e-13 & 0.80 & 17.63 \\ \cline{2-11} 
					&\multirow{4}{*}{$\rho=0.2$} 
					& ALGENCAN & -4.21e+01 & -4.21e+01 & 9.60e-08 & 3.21e-07 & 1.22e-09 & 5.36e-09 & 1.68 & 7.66 \\
					&& IPOPT & -4.21e+01 & -4.21e+01 & 9.95e-07 & 9.88e-07 & 1.67e-09 & 1.55e-10 & 0.73 & 8.55 \\
					&& PSQP & -4.21e+01 & -4.21e+01 & 6.60e-07 & 7.11e-07 & 6.00e-11 & 2.82e-10 & 0.16 & 3.22 \\
					&& SLSQP & -4.21e+01 & -4.21e+01 & 9.57e-07 & 1.21e-07 & 8.63e-12 & 6.58e-13 & 0.61 & 2.75 \\
					&& TRCON & -4.21e+01 & -4.20e+01 & 9.98e-07 & 2.18e-02 & 3.60e-12 & 2.89e-04 & 1.27 & 19.15 \\ \cline{2-11} 
					&\multirow{4}{*}{$\rho=0.3$} 
					& ALGENCAN & -4.37e+01 & -4.37e+01 & 2.52e-07 & 1.77e-08 & 1.46e-09 & 1.98e-10 & 7.51 & 10.21 \\
					&& IPOPT & -4.37e+01 & -4.37e+01 & 3.61e-07 & 4.90e-07 & 1.46e-11 & 4.18e-11 & 1.83 & 10.92 \\
					&& PSQP & -4.37e+01 & -4.37e+01 & 9.88e-07 & 6.41e-07 & 6.01e-11 & 3.81e-11 & 0.21 & 4.78 \\
					&& SLSQP & -4.37e+01 & -4.37e+01 & 8.90e-07 & 1.30e-07 & 7.63e-11 & 6.61e-13 & 0.97 & 3.99 \\
					&& TRCON & -4.37e+01 & -4.37e+01 & 6.80e-03 & 1.95e-03 & 4.21e-04 & 1.68e-06 & 3.87 & 19.91 \\ \hline
					\bottomrule
				\end{tabular*}
			\end{minipage}
		\end{center}
	\end{table}

	\section{Conclusion}
	
	In this paper, we focus on a general constrained optimization problem \ref{Prob_Ori}, where some of its equality constraints define an embedded submanifold in $\Rn$. We propose a constraint dissolving approach for \ref{Prob_Ori} to achieve better convenience and efficiency by directly applying various of Euclidean optimization solvers while exploiting the Riemannian structure inside \eqref{Prob_Ori}. In our proposed constraint dissolving approaches,  solving \ref{Prob_Ori} is transformed into solving the constraint dissolving problem \ref{Prob_Pen}, which is a constrained optimization problem in $\Rn$ that has eliminated the manifold constraints $c(x) = 0$ from \ref{Prob_Ori}.  Under mild conditions, we prove the equivalence between \ref{Prob_Ori} and \ref{Prob_Pen}, in the aspect of their constraint qualifications, stationary points and KKT points. Moreover, for the special case with no additional equality constraints except $c(x) = 0$ for \ref{Prob_Ori}, we prove sharper results on the equivalence between \ref{Prob_Ori} and \ref{Prob_Pen}.  Therefore, we can directly apply existing Euclidean optimization approaches to solve \ref{Prob_Pen} and enjoy the rich expertise gained over the past years for solving  constrained optimization in $\Rn$. 
	
	We perform preliminary numerical experiments on Riemannian center of mass problems and balanced graph bisection problems, where various existing numerical solvers, including ALGENCAN, IPOPT, PSQP, SLSQP, and TRCON are applied to solve \ref{Prob_Ori} and \ref{Prob_Pen}. The numerical results demonstrate that applying these solvers to \ref{Prob_Pen} can achieve superior performance in efficiency and stability over the approaches of directly solving \ref{Prob_Ori}.   Therefore, we can conclude that our constraint dissolving approach enjoys the great convenience in directly applying various Euclidean optimization solvers, and its numerical performance can be boosted by exploiting the structures of $\M$ in \ref{Prob_Ori}.

	\bibliography{ref}
	\bibliographystyle{plainnat}
	\appendix

	\section{Proofs for Theoretical Results}
	\subsection{Preliminary lemmas}
	In this subsection, we present several preliminary lemmas from  \cite[Section 3.1]{xiao2022dissolving}.  
	\begin{lem}
		\label{Le_bound_cx}
		For any given $x \in \M$, the following inequalities hold for any $\y \in \Omegax{x}$,
		\begin{equation}
			\frac{1}{\Mc } \norm{c(\y)} \leq \mathrm{dist}(\y, \M) \leq \frac{2}{\Lsc } \norm{c(\y)}.
		\end{equation}
	\end{lem}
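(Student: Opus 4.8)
The plan is to establish the two inequalities separately: the left one $\tfrac{1}{\Mc}\norm{c(\y)}\le\mathrm{dist}(\y,\M)$ by a mean-value estimate along the segment joining $\y$ to its nearest point in $\M$, and the right one $\mathrm{dist}(\y,\M)\le\tfrac{2}{\Lsc}\norm{c(\y)}$ by flowing $\y$ onto $\M$ along a Gauss--Newton vector field and bounding the length of the resulting curve. For the left inequality, pick $\bar{y}\in\mathrm{proj}(\y,\M)$ (which exists since $\M$ is closed). Because $x\in\M$ and $\y\in\Omegax{x}$, we have $\mathrm{dist}(\y,\M)\le\norm{\y-x}\le\varepsilon_x\le\rho_x/2$, so $\norm{\bar{y}-x}\le\norm{\bar{y}-\y}+\norm{\y-x}\le 2\varepsilon_x\le\rho_x$; since $\Theta_x=\ca{B}_{x,\rho_x}$ is convex and contains both $\y$ and $\bar{y}$, the whole segment $\bar{y}+t(\y-\bar{y})$, $t\in[0,1]$, lies in $\Theta_x$, on which $\norm{\Jc}\le\Mc$. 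Then writing $c(\y)=c(\y)-c(\bar{y})=\int_0^1\Jc(\bar{y}+t(\y-\bar{y}))\tp(\y-\bar{y})\,\mathrm dt$ and bounding the integrand yields $\norm{c(\y)}\le\Mc\norm{\y-\bar{y}}=\Mc\,\mathrm{dist}(\y,\M)$. The same Lipschitz estimate along the segment from $x$ to $\y$ also gives the auxiliary bound $\norm{c(\y)}\le\Mc\norm{\y-x}\le\Mc\varepsilon_x$, used below.

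For the right inequality, I would consider the initial value problem $\dot z(t)=-\Jc(z(t))\bigl(\Jc(z(t))\tp\Jc(z(t))\bigr)^{-1}c(z(t))$ with $z(0)=\y$. As long as $z(t)$ stays in $\ca{B}_{x,\rho_x}$, the defining property of $\rho_x$ gives $\sigma_{\min}(\Jc(z(t)))\ge\tfrac12\Lsc>0$, so the right-hand side is well defined and locally Lipschitz; one computes $\tfrac{\mathrm d}{\mathrm dt}c(z(t))=\Jc(z(t))\tp\dot z(t)=-c(z(t))$, hence $c(z(t))=e^{-t}c(\y)$, and $\norm{\dot z(t)}\le\sigma_{\min}(\Jc(z(t)))^{-1}\norm{c(z(t))}\le\tfrac{2}{\Lsc}e^{-t}\norm{c(\y)}$. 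Integrating bounds the arclength: $\norm{z(t)-\y}\le\tfrac{2}{\Lsc}(1-e^{-t})\norm{c(\y)}<\tfrac{2}{\Lsc}\norm{c(\y)}$. Combining this with $\norm{c(\y)}\le\Mc\varepsilon_x$ and the bounds built into the definition of $\varepsilon_x$, a continuation argument shows the trajectory never reaches the boundary of $\ca{B}_{x,\rho_x}$ and is therefore global. Consequently $\{z(t)\}_{t\ge 0}$ is Cauchy as $t\to\infty$, its limit $z_\infty:=\lim_{t\to\infty}z(t)$ satisfies $c(z_\infty)=\lim_{t\to\infty}e^{-t}c(\y)=0$, so $z_\infty\in\M$, and hence $\mathrm{dist}(\y,\M)\le\norm{\y-z_\infty}\le\tfrac{2}{\Lsc}\norm{c(\y)}$.

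The main obstacle is precisely this continuation step: showing that the Gauss--Newton trajectory cannot escape the ball $\ca{B}_{x,\rho_x}$ on which the constants $\Mc$ and $\Lsc$ (and, if one uses instead the $\A$-based flow of \cite{xiao2022dissolving}, also $\Ma$, $\La$, $\Lac$) are valid. Assuming toward a contradiction a first exit time $T$, one uses the speed bound on $[0,T]$ to estimate $\norm{z(T)-x}\le\norm{z(T)-\y}+\norm{\y-x}$ and must check, using the explicit choice of $\varepsilon_x$, that this stays strictly below $\rho_x$. Everything else reduces to the fundamental theorem of calculus and elementary ODE estimates. An equivalent route that sidesteps the ODE is to iterate the discrete Gauss--Newton map $z_{k+1}=z_k-\Jc(z_k)(\Jc(z_k)\tp\Jc(z_k))^{-1}c(z_k)$ and sum its quadratically contracting steps; I would present whichever version keeps the constant $2/\Lsc$ cleanest.
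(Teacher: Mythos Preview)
The paper does not actually prove this lemma; it simply cites \cite[Section 3.1]{xiao2022dissolving}, so there is no in-paper argument to compare against. Your proof of the left inequality is correct and standard.

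For the right inequality, the Gauss--Newton flow idea is natural, but the continuation step you flag as ``the main obstacle'' does not in fact close with the paper's definition of $\varepsilon_x$. To keep $\sigma_{\min}(\Jc(z(t)))\ge\tfrac12\Lsc$ and the local Lipschitz constant $\Lc$ valid you need $z(t)\in\ca{B}_{x,\rho_x}$, and your arclength bound gives
\[
\norm{z(t)-x}\le \norm{z(t)-\y}+\norm{\y-x}< \frac{2}{\Lsc}\norm{c(\y)}+\varepsilon_x \le \frac{2\Mc}{\Lsc}\,\varepsilon_x+\varepsilon_x.
\]
For this to stay below $\rho_x$ you would need $\varepsilon_x\lesssim \frac{\Lsc}{\Mc}\rho_x$. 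But the three constraints defining $\varepsilon_x$ are $\varepsilon_x\le \rho_x/2$, $\varepsilon_x\le \frac{\Lsc}{32\Lc(\Ma+1)}$, and $\varepsilon_x\le \frac{\Lsc^2}{8\Lac\Mc}$; none of them controls the condition number $\Mc/\Lsc$ relative to $\rho_x$. If $\Mc/\Lsc$ is large and $\varepsilon_x=\rho_x/2$ (which the other two bounds allow when $\Lc,\Lac$ are small), the trajectory can exit $\ca{B}_{x,\rho_x}$, and the discrete Gauss--Newton alternative you mention hits the same wall.

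The presence of $\Ma$ and $\Lac$ in the definition of $\varepsilon_x$ is a strong hint that the argument in \cite{xiao2022dissolving} is built around the map $\A$ rather than a pure $\Jc$-based Newton step; if you want the constant $2/\Lsc$ with exactly this $\varepsilon_x$, you will likely have to follow that route (or quietly add a fourth term like $\frac{\Lsc\rho_x}{4\Mc}$ to the minimum defining $\varepsilon_x$, which is harmless for everything downstream but is not what the paper wrote).
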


	\begin{lem}
		\label{Le_Ax_c}
		For any given $x \in \M$, it holds that
		\begin{equation}
			\norm{\A(\y) - \y }  \leq \frac{2(\Ma +1)}{\Lsc }\norm{c(\y)}, \qquad \text{ for any $\y \in \Omegax{x}$}.
		\end{equation}
	\end{lem}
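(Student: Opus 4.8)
The plan is to bound $\norm{\A(\y)-\y}$ by the distance $\mathrm{dist}(\y,\M)$ and then invoke the right-hand inequality of Lemma \ref{Le_bound_cx}. Fix $x\in\M$ and $\y\in\Omegax{x}$, and choose a nearest point $z\in\mathrm{proj}(\y,\M)$, so that $\norm{\y-z}=\mathrm{dist}(\y,\M)$ and, by Assumption \ref{Assumption_2}(2), $\A(z)=z$. Writing
\begin{equation*}
\A(\y)-\y=\bigl(\A(\y)-\A(z)\bigr)+\bigl(\A(z)-z\bigr)+(z-\y)=\bigl(\A(\y)-\A(z)\bigr)+(z-\y),
\end{equation*}
the triangle inequality gives $\norm{\A(\y)-\y}\le\norm{\A(\y)-\A(z)}+\norm{\y-z}$, so it suffices to control the first term by a multiple of $\norm{\y-z}$.

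The next step is to show that $z$, $\y$, and the whole segment $[z,\y]$ lie in $\Theta_x$, where the uniform bound $\norm{\Ja(\cdot)}\le\Ma$ is available. Since $\y\in\Omegax{x}$ we have $\norm{\y-x}\le\varepsilon_x$; because $z$ is a nearest point on $\M$ and $x\in\M$, $\norm{z-\y}\le\norm{x-\y}\le\varepsilon_x$, hence $\norm{z-x}\le 2\varepsilon_x\le\rho_x$ using $\varepsilon_x\le\rho_x/2$. As $\Theta_x$ is a closed ball it is convex, so $[z,\y]\subset\Theta_x$. A mean-value estimate for the $C^1$ map $\A$ (whose Jacobian is $\Ja(\cdot)\tp$ with $\norm{\Ja(\cdot)\tp}=\norm{\Ja(\cdot)}\le\Ma$ on $\Theta_x$) then yields $\norm{\A(\y)-\A(z)}\le\Ma\norm{\y-z}$, and therefore
\begin{equation*}
\norm{\A(\y)-\y}\le(\Ma+1)\norm{\y-z}=(\Ma+1)\,\mathrm{dist}(\y,\M).
\end{equation*}

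Finally, applying $\mathrm{dist}(\y,\M)\le\frac{2}{\Lsc}\norm{c(\y)}$ from Lemma \ref{Le_bound_cx} gives $\norm{\A(\y)-\y}\le\frac{2(\Ma+1)}{\Lsc}\norm{c(\y)}$, which is exactly the claim. The only delicate point is verifying that the segment $[z,\y]$ stays inside $\Theta_x$ so that the uniform Jacobian bound $\Ma$ legitimately applies; this is precisely where the choice $\varepsilon_x\le\rho_x/2$ in the definition of $\Omegax{x}$ is needed, and it is the step I would spell out most carefully. Everything else is a routine triangle-inequality and mean-value argument.
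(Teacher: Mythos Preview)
Your proof is correct. The paper does not prove this lemma directly but cites it from \cite[Section~3.1]{xiao2022dissolving}; your argument (project $y$ to a nearest $z\in\M$, use $\A(z)=z$, apply the mean-value inequality with the Jacobian bound $\Ma$ on $\Theta_x$, then invoke Lemma~\ref{Le_bound_cx}) is the natural route, and your care in verifying $[z,y]\subset\Theta_x$ via $\varepsilon_x\le\rho_x/2$ is exactly the point that needs attention.
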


	\begin{lem}
		\label{Le_A_secondorder_descrease}
		For any given $x \in \M$,  it holds that 
		\begin{equation}
			\norm{c(\A(\y))} \leq \frac{4\Lac }{\Lsc ^2}\norm{c(\y)}^2, \qquad \text{for any $\y \in \Omegax{x}$}.
		\end{equation}
	\end{lem}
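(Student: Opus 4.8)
The plan is to exploit the fact that, by Assumption~\ref{Assumption_2}, the composite mapping $\phi := c\circ\A$ vanishes together with its first derivative at every point of $\M$: for $z\in\M$ we have $\A(z)=z$, hence $\phi(z)=c(z)=0$, and the transposed Jacobian of $\phi$ at $z$ equals $\Ja(z)\Jc(\A(z))=\Ja(z)\Jc(z)=0$ by item~3 of Assumption~\ref{Assumption_2}. Thus $\phi$ is ``second order'' along $\M$, so an integral (Taylor) expansion of $\phi$ about a nearby manifold point will bound $\norm{c(\A(\y))}=\norm{\phi(\y)}$ by a multiple of $\mathrm{dist}(\y,\M)^2$, and Lemma~\ref{Le_bound_cx} then converts $\mathrm{dist}(\y,\M)$ into $\norm{c(\y)}$.

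Concretely, I would fix $\y\in\Omegax{x}$ and pick $\bar{y}\in\mathrm{proj}(\y,\M)$, which is nonempty since $\M$ is closed; because $x\in\M$ this gives $\norm{\y-\bar{y}}=\mathrm{dist}(\y,\M)\le\norm{\y-x}\le\varepsilon_x$. A convexity estimate then shows that every point $(1-t)\bar{y}+t\y$, $t\in[0,1]$, lies within distance $2\varepsilon_x\le\rho_x$ of $x$, i.e. the segment $[\bar{y},\y]$ is contained in $\Theta_x$, so all the constants attached to $\Theta_x$ are available along it. Since $c$ and $\A$ are locally Lipschitz smooth, $\phi$ is continuously differentiable with locally Lipschitz Jacobian, and the fundamental theorem of calculus yields
\[
c(\A(\y)) \;=\; \phi(\y)-\phi(\bar{y}) \;=\; \int_{0}^{1}\big(\Ja(\bar{y}+t(\y-\bar{y}))\,\Jc(\A(\bar{y}+t(\y-\bar{y})))\big)\tp(\y-\bar{y})\,dt .
\]
Using $\Ja(\bar{y})\Jc(\A(\bar{y}))=0$ together with the fact that $y\mapsto\Ja(y)\Jc(\A(y))$ is Lipschitz on $\Theta_x$ with constant $\Lac$ (which is exactly its definition), the integrand at parameter $t$ is bounded by $\Lac\,t\norm{\y-\bar{y}}^2$, and hence $\norm{c(\A(\y))}\le\Lac\norm{\y-\bar{y}}^2\int_0^1 t\,dt=\tfrac{\Lac}{2}\,\mathrm{dist}(\y,\M)^2$.

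To finish, I would invoke the right-hand inequality of Lemma~\ref{Le_bound_cx}, $\mathrm{dist}(\y,\M)\le\tfrac{2}{\Lsc}\norm{c(\y)}$, which gives $\norm{c(\A(\y))}\le\tfrac{\Lac}{2}\cdot\tfrac{4}{\Lsc^2}\norm{c(\y)}^2=\tfrac{2\Lac}{\Lsc^2}\norm{c(\y)}^2$, even slightly sharper than the stated $\tfrac{4\Lac}{\Lsc^2}\norm{c(\y)}^2$. The only genuinely delicate point is the second step --- the geometric bookkeeping that keeps the whole segment $[\bar{y},\y]$ inside $\Theta_x$ so that the Lipschitz constant $\Lac$ legitimately applies along it --- which is precisely what the bound $\varepsilon_x\le\rho_x/2$ built into the definition of $\Omegax{x}$ guarantees; the remainder is a routine mean-value computation.
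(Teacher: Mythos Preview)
Your argument is correct; in fact you obtain the constant $2\Lac/\Lsc^2$, which is sharper than the stated $4\Lac/\Lsc^2$. Note that the paper itself does not prove this lemma: it is imported from \cite[Section 3.1]{xiao2022dissolving} without proof, so there is no in-paper argument to compare with, but your Taylor/integral expansion about a projection point $\bar y\in\mathrm{proj}(y,\M)$, combined with the vanishing of $\Ja(\bar y)\Jc(\A(\bar y))$ and the Lipschitz constant $\Lac$ on $\Theta_x$, is exactly the natural route and matches how such estimates are derived in that reference.
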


	\begin{lem}
		\label{Le_Ja_identical}
		For any given $x \in \M$, the inclusion $\Ja(x)\tp d  \in \Tx$ holds for any $d  \in \bb{R}^n$. 
		Moreover, when $d \in \Tx$, it holds that $\Ja(x)\tp d = d$. 
		
	\end{lem}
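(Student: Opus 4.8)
The plan is to handle the two assertions separately. The inclusion $\Ja(x)\tp d \in \Tx$ for arbitrary $d$ is a purely algebraic consequence of item 3 of Assumption \ref{Assumption_2}, while the identity $\Ja(x)\tp d = d$ for $d \in \Tx$ additionally exploits item 2 of Assumption \ref{Assumption_2} (that $\A$ fixes $\M$ pointwise) together with the fact that, under item 3 of Assumption \ref{Assumption_1}, $\Tx$ is genuinely the tangent space of the embedded submanifold $\M$ at $x$.

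For the first claim, I would fix an arbitrary $d \in \bb{R}^n$ and verify membership in $\Tx = \{ w \in \bb{R}^n : \Jc(x)\tp w = 0 \}$. Using item 3 of Assumption \ref{Assumption_2}, $\Jc(x)\tp \big(\Ja(x)\tp d\big) = \big(\Ja(x)\Jc(x)\big)\tp d = 0$, so $\Ja(x)\tp d \in \Tx$. This step is immediate.

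For the second claim, recall that item 3 of Assumption \ref{Assumption_1} guarantees that $\Jc(x)$ is full-rank for $x \in \M$, so near $x$ the set $\M$ is a $C^1$ embedded submanifold of $\bb{R}^n$ of dimension $n-p$ whose classical tangent space at $x$ coincides with $\Tx = \Nspace(\Jc(x)\tp)$. Hence, given any $d \in \Tx$, there is a smooth curve $\gamma : (-\delta,\delta) \to \M$ with $\gamma(0) = x$ and $\dot\gamma(0) = d$. By item 2 of Assumption \ref{Assumption_2}, $\A(\gamma(t)) = \gamma(t)$ for every $t$; differentiating this identity at $t=0$ and using that the ordinary differential of $\A$ at $x$ equals $\Ja(x)\tp$ — because $\Ja$ is the \emph{transposed} Jacobian — yields $\Ja(x)\tp d = \dot\gamma(0) = d$. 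Equivalently, one may differentiate $\A \circ \phi = \phi$ for a local smooth parametrization $\phi$ of $\M$ with $\mathrm{range}(D\phi(0)) = \Tx$, which avoids constructing curves one vector at a time.

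I expect the only non-routine point to be the justification that $\Tx$, defined here algebraically as $\Nspace(\Jc(x)\tp)$, really is the tangent space of $\M$ at $x$ and therefore consists exactly of the velocities of curves lying in $\M$; this is precisely where the LICQ hypothesis (item 3 of Assumption \ref{Assumption_1}) is used, and it is a standard consequence of the implicit function theorem. Everything else reduces to one application of the chain rule.
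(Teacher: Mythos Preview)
Your proposal is correct. Both claims are handled properly: the first via the identity $\Jc(x)\tp\Ja(x)\tp=(\Ja(x)\Jc(x))\tp=0$ from Assumption~\ref{Assumption_2}(3), and the second via the curve argument using Assumption~\ref{Assumption_2}(2) together with the fact that LICQ makes $\Tx$ the genuine tangent space of the embedded submanifold $\M$. The only cosmetic point is that under the stated hypotheses $c$ and $\A$ are merely $C^{1,1}$, so the curve $\gamma$ should be taken $C^1$ rather than smooth; this does not affect the argument.

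As for comparison: the paper does not supply its own proof of this lemma. It is listed among the preliminary lemmas imported verbatim from \cite[Section~3.1]{xiao2022dissolving}, so there is no in-paper argument to compare against. Your proof is the standard and expected one.
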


	\begin{lem}
		\label{Le_Ja_nullspace}
		Given any $x \in \M$, $\Ja(x)d = 0$ if and only if $d \in \Nx = \mathrm{range}(\Jc(x))$.
	\end{lem}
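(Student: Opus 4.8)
The plan is to prove the stated equivalence by establishing the two inclusions $\mathrm{range}(\Jc(x)) \subseteq \{d : \Ja(x)d = 0\}$ and $\{d : \Ja(x)d = 0\} \subseteq \Nx$ separately, and then to invoke the identity $\Nx = \mathrm{range}(\Jc(x))$ already recorded in Section 2.1. The first inclusion is exactly the content of the third item of Assumption \ref{Assumption_2}, and the second will follow from Lemma \ref{Le_Ja_identical}; no new machinery is needed beyond what precedes this lemma.

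For the inclusion $\mathrm{range}(\Jc(x)) \subseteq \{d : \Ja(x)d = 0\}$: if $d = \Jc(x) w$ for some $w$, then $\Ja(x) d = \Ja(x)\Jc(x) w = 0$, since $x \in \M$ and $\Ja(x)\Jc(x) = 0$ there. For the reverse inclusion, suppose $\Ja(x) d = 0$. To show $d \in \Nx$ it suffices, by the definition of the normal space, to verify $\inner{d, v} = 0$ for every $v \in \Tx$. By the second assertion of Lemma \ref{Le_Ja_identical}, every $v \in \Tx$ satisfies $v = \Ja(x)\tp v$, hence $\inner{d, v} = \inner{d, \Ja(x)\tp v} = \inner{\Ja(x) d, v} = 0$. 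Thus $d \in \Nx$, and combining the two inclusions with $\Nx = \mathrm{range}(\Jc(x))$ yields the claim.

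I do not expect a genuine obstacle here: once Lemma \ref{Le_Ja_identical} is available, each direction is a two-line computation. If a more coordinate-free packaging is preferred, note that Lemma \ref{Le_Ja_identical} shows $\mathrm{range}(\Ja(x)\tp) = \Tx$, so the standard orthogonality $\{d : \Ja(x) d = 0\} = \mathrm{range}(\Ja(x)\tp)^\perp$ immediately gives $\{d : \Ja(x)d = 0\} = \Tx^\perp = \Nx = \mathrm{range}(\Jc(x))$; a dimension count using that $\Jc(x)$ is full rank (so $\dime \mathrm{range}(\Jc(x)) = p = n - \dime \Tx = \dime \Nx$) serves as a consistency check on the sizes of the two sides.
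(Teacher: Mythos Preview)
Your argument is correct. The paper does not actually supply its own proof of this lemma: it is listed among the preliminary lemmas imported from \cite[Section 3.1]{xiao2022dissolving} without proof, so there is nothing to compare against beyond noting that your derivation---using Assumption~\ref{Assumption_2}(3) for one inclusion and the second assertion of Lemma~\ref{Le_Ja_identical} for the other---is the natural one and stays entirely within the machinery already set up in the paper.
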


	\begin{lem}
		\label{Le_Ja_idempotent}
		Given any $x \in \M$, it holds that $\Ja(x)^2 = \Ja(x)$. 
	\end{lem}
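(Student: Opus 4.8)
The plan is to obtain the idempotence of $\Ja(x)$ as a one-line corollary of the two immediately preceding lemmas. Fix $x \in \M$. Lemma \ref{Le_Ja_identical} tells us that $\Ja(x)\tp$ maps all of $\Rn$ into the tangent space $\Tx$ and restricts to the identity on $\Tx$; Lemma \ref{Le_Ja_nullspace} tells us that the kernel of $\Ja(x)$ is exactly the normal space $\Nx$. Everything will follow by combining these two facts with the orthogonal splitting $\Rn = \Tx \oplus \Nx$.

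The cleanest route is to establish the statement for $\Ja(x)\tp$ first. For an arbitrary $w \in \Rn$, Lemma \ref{Le_Ja_identical} gives $e := \Ja(x)\tp w \in \Tx$, and then, applying the identity-on-$\Tx$ property to the tangent vector $e$, we get $\Ja(x)\tp\big(\Ja(x)\tp w\big) = \Ja(x)\tp e = e = \Ja(x)\tp w$. Since $w$ is arbitrary, $(\Ja(x)\tp)^2 = \Ja(x)\tp$ as linear maps. Taking the transpose of this operator identity, and using $\big(\Ja(x)\tp\,\Ja(x)\tp\big)\tp = \Ja(x)\,\Ja(x)$, we conclude $\Ja(x)^2 = \Ja(x)$.

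Alternatively, to stay on the side of $\Ja(x)$ itself, I would show $\Ja(x)\big(\Ja(x)d - d\big) = 0$ for every $d \in \Rn$; by Lemma \ref{Le_Ja_nullspace} this is equivalent to $\Ja(x)d - d \in \Nx$, i.e.\ to $\Ja(x)d - d$ being orthogonal to $\Tx$. For any $w \in \Tx$ one has $\inner{\Ja(x)d - d,\, w} = \inner{d,\, \Ja(x)\tp w - w} = 0$, again by the identity-on-$\Tx$ part of Lemma \ref{Le_Ja_identical}, which finishes the argument.

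There is no genuine obstacle here; the statement is essentially immediate from the structure of $\Ja(x)$ already pinned down by the earlier lemmas. The only point that needs care is the transposed-Jacobian convention — $\Ja(x)$ is the \emph{transposed} Jacobian of $\A$, so the ``acts as the identity on $\Tx$'' property is phrased for $\Ja(x)\tp$ — and keeping that bookkeeping straight is exactly what makes the transpose step in the first route legitimate.
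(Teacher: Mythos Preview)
Your argument is correct. Both routes you give are valid: the first exploits that $\Ja(x)\tp$ maps $\Rn$ into $\Tx$ and is the identity there (Lemma~\ref{Le_Ja_identical}), so $(\Ja(x)\tp)^2 = \Ja(x)\tp$ and transposing yields the claim; the second uses Lemma~\ref{Le_Ja_nullspace} to reduce to showing $\Ja(x)d - d \in \Nx$, which follows from the same identity-on-$\Tx$ property. Your care with the transposed-Jacobian convention is exactly right.

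As for comparison with the paper: the paper does not actually prove this lemma. It is listed in the appendix among the ``preliminary lemmas from \cite[Section 3.1]{xiao2022dissolving}'' and is simply quoted without proof. So there is no approach in the present paper to compare against; your derivation from the two preceding (also cited) lemmas is a self-contained argument that the paper does not supply.
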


	\subsection{Proofs for Section 3.1}
	\label{Subsection_proofs_CQs}
	\begin{lem}
		\label{Le_subset_ACQ}
		For any $x \in \K$, it holds that 
		\begin{align}
			&\TK(x) \subseteq \TKA(x),\\
			& \TKlin (x) \subseteq \TKAlin(x). 
		\end{align}
	\end{lem}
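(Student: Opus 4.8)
The plan is to treat the two inclusions separately; the only ingredient beyond routine set-chasing is the identity $\Ja(x)\tp d = d$ for tangent directions $d \in \Tx$, supplied by Lemma \ref{Le_Ja_identical}.

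For the first inclusion I would use the fact, already recorded in Section 2, that $\K \subseteq \KA$. Fix $x \in \K$; then $x \in \KA$ as well, so both tangent cones are defined at $x$. If $d \in \TK(x)$, then by Definition \ref{Defin_tangent_cones} there exist $\xk \in \K \to x$ and $t_k \downarrow 0$ with $d = \lim_{k \to +\infty} (x_k - x)/t_k$. Since $x_k \in \K \subseteq \KA$ for every $k$, the same sequence witnesses $d \in \TKA(x)$. Hence $\TK(x) \subseteq \TKA(x)$; this is just monotonicity of the Bouligand tangent cone under set inclusion at a common base point.

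For the second inclusion, fix $x \in \K$, so in particular $x \in \M$, and take $d \in \TKlin(x)$. By Definition \ref{Defin_linearize_cones}, $\inner{d, \nabla c_l(x)} = 0$ for all $l \in [p]$, i.e.\ $\Jc(x)\tp d = 0$, which means $d \in \Tx$; Lemma \ref{Le_Ja_identical} then gives $\Ja(x)\tp d = d$. Since $x \in \M$, Proposition \ref{Prop_partial_h} yields $\nabla \tilde{\ec}_i(x) = \Ja(x)\nabla \ec_i(x)$ for $i \in [N_E]$ and $\nabla \tilde{\ic}_j(x) = \Ja(x)\nabla \ic_j(x)$ for $j \in [N_I]$. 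Therefore, for every $i \in [N_E]$,
\[
\inner{d, \nabla \tilde{\ec}_i(x)} = \inner{d, \Ja(x)\nabla \ec_i(x)} = \inner{\Ja(x)\tp d, \nabla \ec_i(x)} = \inner{d, \nabla \ec_i(x)} = 0,
\]
and for every $j \in \ca{F}(x)$,
\[
\inner{d, \nabla \tilde{\ic}_j(x)} = \inner{\Ja(x)\tp d, \nabla \ic_j(x)} = \inner{d, \nabla \ic_j(x)} \leq 0.
\]
Because $\ca{F}(x) = \ca{F}_A(x)$ for $x \in \M$ (also noted in Section 2), $d$ satisfies all the defining equalities and inequalities of $\TKAlin(x)$, so $d \in \TKAlin(x)$.

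I do not expect a genuine obstacle here: the proof is bookkeeping once one observes that $\TKlin(x)$ already lies inside the tangent space $\Tx$, on which $\Ja(x)\tp$ acts as the identity. The only point that needs a moment's care is tracking which active index set ($\ca{F}$ versus $\ca{F}_A$) appears, and that is harmless since the two coincide on $\M \supseteq \K$.
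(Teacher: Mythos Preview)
Your proof is correct and matches the paper's approach: the paper dispatches the lemma in a single sentence by invoking $\K \subseteq \KA$, which is exactly your argument for the Bouligand tangent cones. For the linearizing cones your treatment via Lemma~\ref{Le_Ja_identical} and Proposition~\ref{Prop_partial_h} is in fact more careful than the paper's one-liner---linearizing cones depend on the constraint representation, not only on the underlying set---so you are supplying the details the paper leaves implicit rather than doing anything different.
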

	This result can be directly derived from the fact that $\K \subseteq \KA$. 
	
	\begin{lem}
		\label{Le_subspace_polar_cone}
		For any closed cone $\ca{X}_1 \subset \ca{R}^n$, let  $\ca{X}_2 = \Ja(x)\tp \ca{X}_1$, then 
		\begin{equation}
			\ca{X}_1^\circ \cap \mathrm{range}(\Ja(x)) = \Ja(x) \ca{X}_2^\circ. 
		\end{equation}
	\end{lem}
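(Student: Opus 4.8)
The plan is to reduce the asserted set equality to the elementary observation that, for a linear map $P$ and an arbitrary set $S \subseteq \bb{R}^n$, the image of the preimage satisfies $P\bigl(P^{-1}(S)\bigr) = S \cap \mathrm{range}(P)$. Throughout, write $P := \Ja(x)$ for brevity; note that \emph{no} special property of $P$ (such as the idempotency of Lemma \ref{Le_Ja_idempotent}) or of $\A$, $\M$ will be needed.

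First I would compute the polar cone $\ca{X}_2^\circ$ explicitly. Since $\ca{X}_2 = P\tp \ca{X}_1$, a vector $e$ lies in $\ca{X}_2^\circ$ if and only if $\inner{e, P\tp w} \leq 0$ for every $w \in \ca{X}_1$; using the adjoint identity $\inner{e, P\tp w} = \inner{P e, w}$, this is equivalent to $\inner{Pe, w} \leq 0$ for all $w \in \ca{X}_1$, i.e.\ to $Pe \in \ca{X}_1^\circ$. Hence
\begin{equation*}
\ca{X}_2^\circ = \{ e \in \bb{R}^n : Pe \in \ca{X}_1^\circ \} = P^{-1}(\ca{X}_1^\circ).
\end{equation*}

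Next I would push this description forward under $P$. By definition, $P\ca{X}_2^\circ = \{ Pe : Pe \in \ca{X}_1^\circ \}$. For the inclusion ``$\subseteq$'', every such vector $Pe$ lies both in $\ca{X}_1^\circ$ and in $\mathrm{range}(P)$. For ``$\supseteq$'', given $y \in \ca{X}_1^\circ \cap \mathrm{range}(P)$, choose $e$ with $Pe = y$; then $Pe = y \in \ca{X}_1^\circ$ shows $e \in \ca{X}_2^\circ$, whence $y = Pe \in P\ca{X}_2^\circ$. Combining the two inclusions yields $P\ca{X}_2^\circ = \ca{X}_1^\circ \cap \mathrm{range}(P)$, which is exactly the claim. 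There is no genuine obstacle: the argument is purely linear-algebraic, and the only point worth remarking is that $\ca{X}_2 = P\tp \ca{X}_1$ is again a cone (the linear image of a cone), so that taking its polar is meaningful — the computation above requires neither closedness nor convexity of $\ca{X}_2$.
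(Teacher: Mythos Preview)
Your proof is correct and follows essentially the same route as the paper's: both hinge on the adjoint identity $\inner{e, P\tp w} = \inner{Pe, w}$ to relate $\ca{X}_2^\circ$ to $\ca{X}_1^\circ$, and both verify the two set inclusions directly. The only cosmetic difference is that the paper picks the specific preimage $\Ja(x)^\dagger d_3$ via the pseudo-inverse when proving $\ca{X}_1^\circ \cap \mathrm{range}(\Ja(x)) \subseteq \Ja(x)\ca{X}_2^\circ$, whereas you simply choose an arbitrary $e$ with $Pe = y$; your formulation via $\ca{X}_2^\circ = P^{-1}(\ca{X}_1^\circ)$ and the general identity $P\bigl(P^{-1}(S)\bigr) = S \cap \mathrm{range}(P)$ is a slightly cleaner packaging of the same argument.
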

	\begin{proof}
		For any $d_3 \in \Ja(x) \ca{X}_2^\circ$, there exists $d_2 \in \ca{X}_2^{\circ}$ such that $d_3 = \Ja(x) d_2$. As a result, from the definition of the polar cone, it holds for any $d_1 \in \ca{X}_1$ that 
		\begin{equation}
			0\geq \inner{d_2, \Ja(x)\tp d_1} =\inner{ \Ja(x)d_2, d_1} = \inner{d_3, d_1}. 
		\end{equation}
		Therefore, from the arbitrariness of $d_1\in \X_1$, we can conclude that  $d_3 \in \ca{X}_1^{\circ}$ and hence $\Ja(x)\ca{X}_2^\circ \subseteq \ca{X}_1^\circ$. Together with the fact that $\Ja(x) \ca{X}_2^\circ \subset \mathrm{range}(\Ja(x))$, we have that 
		\begin{equation}
			\label{Eq_Le_subspace_polar_cone_0}
			\Ja(x) \ca{X}_2^\circ \subseteq \ca{X}_1^\circ \cap \mathrm{range}(\Ja(x)). 
		\end{equation}
		
		On the other hand, for any $d_3 \in \ca{X}_1^\circ \cap \mathrm{range}(\Ja(x))$, $\inner{d_3, d_1}\leq 0$ holds for  any $d_1 \in \ca{X}_1$. Notice that $d_3 \in \mathrm{range}(\Ja(x))$, we have that $\Ja(x){\Ja(x)}^\dagger d_3 = d_3$. Therefore,  for  any $d_1 \in \ca{X}_1$, we have
		\begin{equation}
			0\geq \inner{d_3, d_1} = \inner{\Ja(x){\Ja(x)}^\dagger d_3, d_1} = \inner{{\Ja(x)}^\dagger d_3, {\Ja(x)}\tp d_1},
		\end{equation}
		which illustrates that ${\Ja(x)}^\dagger d_3 \in \ca{X}_2^{\circ}$. Therefore,  we conclude that 
		\begin{equation}
			d_3 = \Ja(x){\Ja(x)}^\dagger d_3 \in \Ja(x)\ca{X}_2^{\circ}.
		\end{equation}
		Therefore, together with \eqref{Eq_Le_subspace_polar_cone_0}, we  achieve $ \ca{X}_1^\circ \cap \mathrm{range}(\Ja(x)) =  \Ja(x) \ca{X}_2^\circ $. This completes the proof. 
	\end{proof}

	\begin{lem}
		\label{Le_TK_TM}
		For any $x \in \K$, it holds that $\TK(x) \subseteq \Tx$. 
	\end{lem}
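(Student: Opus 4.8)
The plan is to read off the inclusion directly from the definition of the Bouligand tangent cone, exploiting the elementary fact that $\K \subseteq \M$ by definition of $\K$, so that any sequence witnessing a direction in $\TK(x)$ actually lies on the zero level set of $c$, whose first-order expansion then pins the limiting direction into $\Tx = \{ d \in \Rn : \Jc(x)\tp d = 0 \}$.

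First I would fix $x \in \K$ and take an arbitrary $d \in \TK(x)$. By Definition \ref{Defin_tangent_cones} there exist $\xk \in \K$ with $\xk \to x$, scalars $t_k \downarrow 0$, and $d = \lim_{k \to \infty} (\xk - x)/t_k$. Since $\K \subseteq \M$, we have $c(\xk) = 0$ for every $k$, and likewise $c(x) = 0$ because $x \in \K \subseteq \M$. Next, Assumption \ref{Assumption_1}(2) guarantees that $c$ is differentiable at $x$ with transposed Jacobian $\Jc(x)$, hence
\begin{equation*}
0 \;=\; c(\xk) - c(x) \;=\; \Jc(x)\tp (\xk - x) + r_k, \qquad \text{where } \frac{\norm{r_k}}{\norm{\xk - x}} \to 0 .
\end{equation*}
Dividing by $t_k$ and passing to the limit, the remainder contributes $r_k/t_k = \big(\norm{r_k}/\norm{\xk - x}\big)\cdot\big(\norm{\xk - x}/t_k\big)$, which tends to $0$ because $\norm{\xk - x}/t_k \to \norm{d}$, while $(\xk - x)/t_k \to d$; therefore $\Jc(x)\tp d = 0$, i.e.\ $d \in \Tx$. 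Since $d \in \TK(x)$ was arbitrary, $\TK(x) \subseteq \Tx$.

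I do not anticipate any genuine obstacle: the only point needing (minor) care is the vanishing of the normalized remainder $r_k/t_k$, and the boundary case $d = 0$ is harmless since $0 \in \Tx$ trivially. As an alternative one could note $\TK(x) \subseteq$ (tangent cone of $\M$ at $x$) from $\K \subseteq \M$ and then invoke the standard fact that LICQ on $c$ (Assumption \ref{Assumption_1}(3)) makes the tangent cone of $\M$ equal to $\Tx$; the direct argument above is preferable because it does not even require LICQ for this one inclusion.
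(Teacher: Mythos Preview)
Your proposal is correct and follows essentially the same approach as the paper: pick a witnessing sequence $\{x_k\}\subset\K\subset\M$ for $d\in\TK(x)$, use $c(x_k)=c(x)=0$, and conclude $\Jc(x)^\top d=0$. The paper compresses the argument to a single sentence (``From the definition of $\Tx$, we can conclude that $d\in\Tx$''), whereas you spell out the first-order expansion and the vanishing of the remainder; your write-up is a more explicit version of the same idea.
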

	\begin{proof} 
		For any $d \in \TK(x)$, there exists a sequence $\{\xk\} \subset \K \subset \M$ and $\{t_k\} \to 0$ such that  $d = \lim_{k \to +\infty} \frac{x_k - x}{t_k}$. From the definition of $\Tx$, we can conclude that $d \in \Tx$, hence complete the proof. 
	\end{proof}

	\paragraph{Proof for Proposition \ref{Prop_Equivalence_LICQ}}
	\begin{proof}
		
		For any coefficients $\hat{\lambda} \in \bb{R}^{N_E}$,  and $\hat{\mu} \in \bb{R}^{N_I}$  such that 
		\begin{equation}
			0 = \sum_{i \in [N_E]} \hat{\lambda}_i \nabla \tilde{\ec}_i(x) + \sum_{j \in \ca{F}_A(x)} \hat{\mu}_j \nabla \tilde{\ic}_j(x),
		\end{equation}
		it holds that
		\begin{equation}
			\begin{aligned}
				0 ={}& \sum_{i \in [N_E]} \hat{\lambda}_i \nabla \tilde{\ec}_i(x) + \sum_{j \in \ca{F}_A(x)} \hat{\mu}_j \nabla \tilde{\ic}_j(x)\\
				={}& \Ja(x) \left( \sum_{i \in [N_E]} \hat{\lambda}_i \nabla \ec_i(x) + \sum_{j \in \ca{F}_A(x)} \hat{\mu}_j \nabla \ic_j(x)   \right).
			\end{aligned}
		\end{equation}
		Notice that for any $x \in \M$, we have $\tilde{\ic}(x) = \ic(x)$, hence $\ca{F}_A(x) = \ca{F}(x)$ holds for any $x \in \M$. Then it holds from Lemma \ref{Le_Ja_nullspace}  that 
		\begin{equation}
			\sum_{i \in [N_E]} \hat{\lambda}_i \nabla \ec_i(x) + \sum_{j \in \ca{F}(x)} \hat{\mu}_j \nabla \ic_j(x) \in \mathrm{range}(\Jc(x)). 
		\end{equation}
		That is, there exists $\hat{\rho} \in \Rp$ such that 
		\begin{equation}
			\sum_{l \in [p]} \hat{\rho}_l\nabla  c_l(x) + \sum_{i \in [N_E]} \hat{\lambda}_i \nabla \ec_i(x) +  \sum_{j \in \ca{F}(x)} \hat{\mu}_j \nabla \ic_j(x) = 0. 
		\end{equation}
		Since LICQ holds at $x$ with respect to \ref{Prob_Ori}, we have $\{\nabla u_i(x): i\in [N_E]\}\cup\{\nabla c_l(x): l\in [p]\}\cup\{\nabla v_j(x): j\in \ca{F}(x)\}$ is linearly independent set, hence we obtain that $\hat{\rho} = 0$,  $\hat{\lambda} = 0$ and $\hat{\mu} = 0$. Therefore, we conclude that the LICQ with respect to \ref{Prob_Pen} holds at $x$.

	\end{proof}

	\paragraph{Proof for Proposition \ref{Prop_Equivalence_MFCQ}}
	\begin{proof}
		From the definition of MFCQ with respect to \ref{Prob_Ori}, it holds that $\{\nabla u_i(x): i\in [N_E]\}\cup\{\nabla c_l(x): l\in [p]\}$  is a linearly independent set and there exists $d \in \bb{R}^n$ such that 
		\begin{equation}
			\label{Eq_Prop_Equivalence_MFCQ_0}
			\begin{aligned}
				&\inner{d, \nabla v_j(x)} \leq -1, ~\forall j \in \ca{F}(x);\\
		 		&\inner{d, \nabla u_i(x)} = 0, ~\forall i \in [N_E];\\
		 		&\inner{d, \nabla c_i(x)} = 0, ~\forall i \in [p].
			\end{aligned}
		\end{equation}
		Therefore, for any $\hat{\lambda} \in \bb{R}^{N_E}$ such that $\sum_{i \in [N_E]} \hat{\lambda}_i \Ja(x) \nabla u_i(x) = 0$, Lemma \ref{Le_Ja_nullspace} implies that 
		\begin{equation*}
			\sum_{i \in [N_E]} \hat{\lambda}_i \nabla u_i(x) \in \mathrm{range}(\Jc(x)).
		\end{equation*}
		Then by the linear independence of $\{\nabla u_i(x): i\in [N_E]\}\cup\{\nabla c_l(x): l\in [p]\}$ , we get $\hat{\lambda} = 0$. As a result, we can conclude that  $\{ \Ja(x) \nabla u_i(x): i \in [N_E] \}$ is a linearly independent set. 
		Moreover, \eqref{Eq_Prop_Equivalence_MFCQ_0} illustrates that $d \in \Tx$. Combining with Lemma \ref{Le_Ja_identical}, it holds for any $j \in [N_I]$ that 
		\begin{equation}
			\inner{ d, \Ja(x) \nabla v_j(x)} = \inner{ \Ja(x)\tp d,  \nabla v_j(x)} =\inner{d, \nabla v_j(x) } \leq -1.
		\end{equation}
		Therefore, we conclude that MFCQ with respect to \ref{Prob_Pen} holds at $x$. 
	\end{proof}

	\paragraph{Proof for Lemma \ref{Le_relationship_ACQ}}
	\begin{proof}
		{\bf Proof for \eqref{Eq_Le_relationship_ACQ_0}: }
		For any $d_1 \in \TKAlin(x)$, it holds from the definition of $\TKAlin(x)$ that 
		\begin{equation}
			\inner{d_1, \Ja(x) \nabla u_i(x)} = 0, ~\inner{d_1, \Ja(x) \nabla  v_j(x)} \leq 0, ~\forall ~i \in [N_E], ~j\in \ca{F}_A(x).
		\end{equation}
		Notice that $\ca{F}_A(x) = \ca{F}(x)$ holds for any $x \in \K$. Then for any $i \in [N_E], j\in \ca{F}(x)$, we obtain
		\begin{equation}
			\inner{\Ja(x)\tp d_1, \nabla u_i(x)} = 0, ~ \text{and}~ \inner{\Ja(x)\tp d_1, \nabla  v_j(x)} \leq 0. 
		\end{equation}
		Since $x\in\K\subset\ca{M}$, Assumption~\ref{Assumption_2} implies
		that  $\inner{\Ja(x)\tp d_1, \nabla c_l(x)} = 0$ holds for any $l\in[p]$.
		Therefore, based on Definition \ref{Defin_linearize_cones}, it holds that  $\Ja(x)\tp d_1 \in \TKlin(x)$. From arbitrariness of $d_1 \in \TKAlin(x)$, we obtain that $\Ja(x)\tp \TKAlin(x) \subseteq \TKlin(x)$. 
		
		On the other hand,  Lemma \ref{Le_subset_ACQ} illustrates that $\TKlin(x) \subseteq \TKAlin(x)$. Notice that $\TKlin(x) \subset \Tx$, then it holds from Lemma \ref{Le_Ja_identical} that 
		\begin{equation}
			\TKlin(x) = \Ja(x)\tp \TKlin(x) \subseteq \Ja(x)\tp \TKAlin(x).  
		\end{equation}
		As a result, it holds that 
		\begin{equation}
			\Ja(x)\tp \TKAlin(x) = \TKlin(x).
		\end{equation}
		Together with Lemma \ref{Le_subspace_polar_cone}, it holds that 
		\begin{equation}
			\TKAlin(x)^\circ \cap \mathrm{range}(\Ja(x)) = \Ja(x)\TKlin(x)^\circ. 
		\end{equation}

		{\bf Proof for \eqref{Eq_Le_relationship_ACQ_1}: }
		Lemma \ref{Le_subset_ACQ} illustrates that $\TK(x) \subseteq \TKA(x)$. Moreover, from Lemma \ref{Le_TK_TM}, it holds that  $\TK(x) \subset \Tx$. Therefore, from Lemma \ref{Le_Ja_identical} we have that 
		\begin{equation}
			\TK(x) = \Ja(x)\tp \TK(x) \subseteq \Ja(x)\tp \TKA(x).  
		\end{equation}
		Hence $\big(\Ja(x)\tp \TKA(x)\big)^\circ \subset \TK(x)^\circ.$
		Together with Lemma \ref{Le_subspace_polar_cone}, it holds that 
		\begin{equation}
			\TKA(x)^\circ \cap \mathrm{range}(\Ja(x)) = \Ja(x)\left(\Ja(x)\tp \TKA(x)\right)^\circ \subseteq \Ja(x)\TK(x)^\circ. 
		\end{equation}
		This completes the proof. 
	\end{proof}

%	\paragraph{Proof for Proposition \ref{Prop_CQ_equivalence_ACQ}.}
%	\begin{proof}
%		From Lemma \ref{Le_relationship_ACQ}, when $\TK(x) = \TKlin(x)$,  we have
%		\begin{equation}
%			\TKlin(x) = \TK(x) \subseteq \Ja(x)\tp \TKA(x)   \subseteq  \Ja(x)\tp \TKAlin(x) = \TKlin(x).
%		\end{equation}
%		Therefore, 
%		\begin{equation}
%			\Ja(x)\tp \TKA(x)   =  \Ja(x)\tp \TKAlin(x),
%		\end{equation}
%		and completes the proof. 
%	\end{proof}

	\paragraph{Proof for Proposition \ref{Prop_CQ_equivalence_GCQ}}
	\begin{proof}
		From Lemma \ref{Le_relationship_ACQ}, it holds that 
		\begin{equation}
			\begin{aligned}
				&\TKA(x)^\circ \cap \mathrm{range}(\Ja(x)) \subseteq \Ja(x)\TK(x)^\circ \\
				={}& \Ja(x)\TKlin(x)^\circ = \TKAlin(x)^\circ \cap \mathrm{range}(\Ja(x))\\
				\subseteq{}& \TKA(x)^\circ \cap \mathrm{range}(\Ja(x)).
			\end{aligned}
		\end{equation}
		Here the first inclusion follows from \eqref{Eq_Le_relationship_ACQ_1}, the second equality is implied by \eqref{Eq_Le_relationship_ACQ_0}, and  the last inclusion uses the fact that  $\TKAlin(x)^\circ \subseteq \TKA(x)^\circ$ holds for any $x \in \KA$. 
		Therefore, we obtain that 
		\begin{equation}
			\TKA(x)^\circ \cap \mathrm{range}(\Ja(x)) = \TKAlin(x)^\circ \cap \mathrm{range}(\Ja(x)),
		\end{equation}
		and complete the proof. 
	\end{proof}

	\paragraph{Proof for Theorem \ref{The_CQ_fin}}
	\begin{proof}
		From Definition \ref{Defin_tangent_cones} and Definition \ref{Defin_linearize_cones}, it is easy to verify that $\TKAlin(x)^\circ \subseteq \TKA(x)^\circ$  holds for any $x \in \KA$. Therefore, for any $x \in \K \subseteq \KA$ that is a KKT point of \ref{Prob_Pen}, $x$ is also a first-order stationary point of \ref{Prob_Pen}. 
		 
		On the other hand, for any $x \in \K$ that is a first-order stationary point of \ref{Prob_Pen}, it holds from Definition \ref{Defin_FOSP} that 
		\begin{equation*}
			0 \in \partial h(x) + \TKA(x)^\circ = \Ja(x)\partial f(x) + \TKA(x)^\circ. 
		\end{equation*}
		Notice that $\Ja(x)\partial f(x) \subset \mathrm{range}(\Ja(x))$. Then it holds that 
		\begin{equation*}
			\begin{aligned}
				0 \in{}& \left(\Ja(x)\partial f(x) + \TKA(x)^\circ\right) \cap \mathrm{range}(\Ja(x)) \\
				={}& \Ja(x)\partial f(x) + \left( \TKA(x)^\circ \cap\mathrm{range}(\Ja(x))  \right)\\
				={}& \Ja(x)\partial f(x) + \left( \TKAlin(x)^\circ \cap\mathrm{range}(\Ja(x))  \right)\\
				\subseteq{}& \Ja(x)\partial f(x) + \TKAlin(x)^\circ = \partial h(x) +  \TKAlin(x)^\circ,
			\end{aligned}
		\end{equation*}
		where the last equality follows from  Proposition \ref{Prop_partial_h}. 
		Therefore, we can conclude that $x$ is a KKT point of \ref{Prob_Pen} and complete the proof. 
	\end{proof}

	\subsection{Proofs for Subsection 3.2}
	\label{Subsection_proofs_equivalence}

	\paragraph{Proof for Theorem \ref{The_equivalence_feasible}}
	\begin{proof}
		{\bf Proof for Theorem \ref{The_equivalence_feasible}(1)}

		When $x$ is a first-order stationary point of \ref{Prob_Pen},  $x \in \K$ implies that $c(x) = 0$. Then it holds that 
		\begin{equation*}
			0 \in \partial h(x) + \TKA(x)^\circ = \Ja(x) \partial f(x) +  \TKA(x)^\circ. 
		\end{equation*} 
		Then it holds from \eqref{Eq_Le_relationship_ACQ_1} in  Lemma \ref{Le_relationship_ACQ} that
		\begin{equation*}
			\begin{aligned}
				&0 \in \left( \Ja(x) \partial f(x) +  \TKA(x)^\circ \right) \cap \mathrm{range}(\Ja(x))\\
				\subseteq{}& \Ja(x) \partial f(x)  + \left( \TKA(x)^\circ  \cap \mathrm{range}(\Ja(x)) \right)\\
				\subseteq{}& \Ja(x) \partial f(x) + \Ja(x) \TK(x)^\circ = \Ja(x) (\partial f(x) + \TK(x)^\circ).
			\end{aligned}
		\end{equation*}
		As a result, from Lemma \ref{Le_Ja_nullspace}, there exists $\hat{\rho} \in \Rp$ such that $0 \in \partial f(x) + \Jc(x)\hat{\rho} + \TK(x)^\circ $. Additionally,  Lemma \ref{Le_TK_TM} illustrates that $\Nx \subseteq \TK^{\circ}$, which leads to the fact that $\Jc(x)\hat{\rho} + \TK(x)^\circ = \TK(x)^\circ$. 
		Therefore, together with Definition \ref{Defin_FOSP}, we obtain that 
		\begin{equation*}
			0 \in \partial f(x) + \TK(x)^\circ,
		\end{equation*}
		hence $x$ is a first-order stationary point of \ref{Prob_Ori}.

		{\bf Proof for Theorem \ref{The_equivalence_feasible}(2)}
		
		Suppose $x \in \K$ is a KKT point to \ref{Prob_Ori}, then from Definition \ref{Defin_KKT_NLP}, there exists $\rho \in \Rp$, $\lambda \in \bb{R}^{N_E}$ and $\mu \in \bb{R}^{N_I}_+$ such that 
		\begin{equation*}
			0 \in \partial f(x) +\sum_{l \in[p]} \rho_l \nabla c_l(x) +  \sum_{i \in[N_E]} \lambda_i \nabla u_i(x)   + \sum_{j \in \ca{F}(x)} \mu_j \nabla v_j(x).
		\end{equation*} 
		Lemma \ref{Le_Ja_identical} implies that  $\Ja(x) \nabla c_l(x) = 0$ holds for any $l \in [p]$. Therefore, we obtain
		\begin{equation*}
			\begin{aligned}
				&0 \in \Ja(x)\Big( \partial f(x) + \sum_{l \in[p]} \rho_l \nabla c_l(x)  + \sum_{i \in[N_E]} \lambda_i \nabla u_i(x) +   \sum_{j \in \ca{F}(x)} \mu_j \nabla v_j(x) \Big)\\
				={}& \partial h(x) + \sum_{i \in[N_E]} \lambda_i \Ja(x) \nabla u_i(x) + \sum_{j \in \ca{F}(x)} \mu_j  \Ja(x) \nabla  v_j(x)\\
				={}& \partial h(x) + \sum_{i \in[N_E]} \lambda_i  \nabla \tilde{\ec}(x) + \sum_{j \in \ca{F}(x)} \mu_j  \nabla \tilde{\ic}(x),
			\end{aligned}
		\end{equation*}
		which shows that $x$ is a KKT point to \ref{Prob_Pen}. 
		
		On the other hand, when $x \in \K$ is a KKT point to \ref{Prob_Pen}, it holds that 
		\begin{equation*}
			\begin{aligned}
				0\in{}&  \partial h(x) + \sum_{i \in[N_E]} \lambda_i \nabla  \tilde{\ec}_i(x) + \sum_{j \in \ca{F}(x)} \mu_j \nabla \tilde{\ic}_j(x)\\
				={}& \Ja(x)\Big( \partial f(x) + \sum_{i \in[N_E]} \lambda_i \nabla \ec_i(x) + \sum_{j \in \ca{F}(x)} \mu_j \nabla \ic_j(x) \Big).
			\end{aligned}
		\end{equation*}
		Combining with Lemma \ref{Le_Ja_nullspace}, it holds that 
		\begin{equation*}
			\mathrm{range}(\Jc(x)) \bigcap \Big( \partial f(x) + \sum_{i \in[N_E]} \lambda_i \nabla \ec_i(x) + \sum_{j \in \ca{F}(x)} \mu_j \nabla \ic_j(x) \Big) \neq \emptyset. 
		\end{equation*}
		Therefore, we can conclude that  there exists $\rho \in \Rp$ such that 
		\begin{equation*}
			0 \in	\partial f(x) + \sum_{l \in [p]} \rho_l c_l(x) + \sum_{i \in[N_E]} \lambda_i \nabla  u_i(x)  + \sum_{j \in \ca{F}(x)} \mu_j \nabla v_j(x),
		\end{equation*}
		which implies that $x$ is a KKT point to \ref{Prob_Ori}. This completes the proof. 
	\end{proof}

	\begin{lem}
		\label{Le_JA_JAIn}
		For any given $x \in \M$ and any $y \in \BOmegax{x}$, it holds that 
		\begin{equation*}
			\norm{\Ja(y) (\Ja(y) - I_n) } \leq \frac{2\La (2\Ma  + 1)}{\Lsc } \norm{c(y)}. 
		\end{equation*}
	\end{lem}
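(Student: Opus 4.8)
The starting point is Lemma \ref{Le_Ja_idempotent}: for every $z\in\M$ we have $\Ja(z)^2=\Ja(z)$, i.e. $\Ja(z)(\Ja(z)-I_n)=0$. Hence the matrix $\Ja(y)(\Ja(y)-I_n)$ is purely an ``off-manifold error'' and should be controlled by $\mathrm{dist}(y,\M)$, which in turn is controlled by $\norm{c(y)}$ via Lemma \ref{Le_bound_cx}. Concretely, I would fix $y\in\BOmegax{x}$, pick any $z\in\mathrm{proj}(y,\M)$ (so $z\in\M$ and $\norm{y-z}=\mathrm{dist}(y,\M)$), and write
\[
\Ja(y)(\Ja(y)-I_n) \;=\; \Ja(y)(\Ja(y)-I_n) - \Ja(z)(\Ja(z)-I_n) \;=\; \bigl(\Ja(y)^2 - \Ja(z)^2\bigr) - \bigl(\Ja(y)-\Ja(z)\bigr).
\]

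\textbf{Key estimate.} Next I would split the quadratic difference as
\[
\Ja(y)^2 - \Ja(z)^2 \;=\; \Ja(y)\bigl(\Ja(y)-\Ja(z)\bigr) + \bigl(\Ja(y)-\Ja(z)\bigr)\Ja(z),
\]
and bound each piece using $\norm{\Ja(y)}\le\Ma$, $\norm{\Ja(z)}\le\Ma$ (the definition of $\Ma$) and $\norm{\Ja(y)-\Ja(z)}\le\La\norm{y-z}$ (the definition of $\La$). This gives $\norm{\Ja(y)^2-\Ja(z)^2}\le 2\Ma\La\norm{y-z}$, and together with $\norm{\Ja(y)-\Ja(z)}\le\La\norm{y-z}$ we obtain $\norm{\Ja(y)(\Ja(y)-I_n)}\le\La(2\Ma+1)\norm{y-z}$. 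Finally, since $y\in\BOmegax{x}\subseteq\Omegax{x}$, Lemma \ref{Le_bound_cx} yields $\norm{y-z}=\mathrm{dist}(y,\M)\le\frac{2}{\Lsc}\norm{c(y)}$, and substituting gives exactly $\frac{2\La(2\Ma+1)}{\Lsc}\norm{c(y)}$.

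\textbf{Main obstacle.} There is no deep difficulty here; the only point requiring care is verifying that both $y$ and the projection point $z$ lie in $\Theta_x$, so that the constants $\La$ and $\Ma$ (defined as suprema over $\Theta_x$) legitimately apply to them. For $y$ this is immediate from $\BOmegax{x}\subseteq\Omegax{x}\subseteq\Theta_x$. For $z$ I would use $\norm{y-z}=\mathrm{dist}(y,\M)\le\norm{y-x}$ (as $x\in\M$), hence $\norm{z-x}\le 2\norm{y-x}\le 2\varepsilon_x\le\rho_x$ (using $\frac{\Lsc}{4\Mc(\Ma+1)+\Lsc}\le 1$ and $\varepsilon_x\le\rho_x/2$), so $z\in\Theta_x$ as required. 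With this bookkeeping in place, the chain of inequalities above closes the proof.
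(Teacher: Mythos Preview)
Your proof is correct and follows essentially the same route as the paper: subtract the idempotency identity $\Ja(z)(\Ja(z)-I_n)=0$ at a manifold point, split $\Ja(y)^2-\Ja(z)^2$ telescopically, apply the Lipschitz constant $\La$ and the bound $\Ma$, and convert $\mathrm{dist}(y,\M)$ to $\norm{c(y)}$ via Lemma~\ref{Le_bound_cx}. The only difference is that the paper compares $y$ to the given center $x\in\M$ rather than to a projection point $z\in\mathrm{proj}(y,\M)$; your choice (together with the verification that $z\in\Theta_x$) is slightly more careful, since it directly justifies the factor $\mathrm{dist}(y,\M)$ that the paper writes in its intermediate step.
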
 
	\begin{proof}
		For any $\y \in \BOmegax{x}$, it holds that
		\begin{equation*}
			\begin{aligned}
				&\norm{(\Ja(y) - I_n) \Ja(y)} 
				\leq \norm{\Ja(y)(\Ja(y) - \Ja(x))} + \norm{(\Ja(y) - \Ja(x))\Ja(x)} + 
				\norm{\Ja(y) - \Ja(x)}
				\\
				\leq{}& \La (2\Ma  + 1) \mathrm{dist}(y, \M)  \\
				\leq{}& \frac{2\La (2\Ma  + 1)}{\Lsc } \norm{c(y)}.
			\end{aligned}
		\end{equation*}
		Here the second inequality follows the Lipschitz continuity of $\Ja(\y)$ and  Lemma \ref{Le_Ja_idempotent}. The last inequality is directly from Lemma \ref{Le_bound_cx}. Then we complete the proof. 
	\end{proof}

	\paragraph{Proof for Theorem \ref{The_equivalence_FOSP}}
	\begin{proof}
		For any $\y \in \BOmegax{x}$, $\lambda \in \bb{R}^{N_E}$, and $\mu \in \bb{R}^{N_I}$, it holds from Lemma \ref{Le_JA_JAIn} that 
		\begin{align*}
			&\sup_{w \in \partial (f\circ \A)(x)}\norm{(\Ja(\y) - I_n)w} \leq \frac{2\La\Lf (2\Ma  + 1)}{\Lsc } \norm{c(y)},\\
			&\norm{(\Ja(\y) - I_n) \sum_{i \in [N_E]} \lambda_i \nabla (\ec_i\circ \A)(y)} \leq \frac{2\La( \norm{\lambda}_1\Mu ) (2\Ma  + 1)}{\Lsc } \norm{c(y)},\\
			&\norm{(\Ja(\y) - I_n) \sum_{j \in [N_I]} \mu_j \nabla (\ic_j\circ \A)(y)} \leq \frac{2\La( \norm{\mu}_1\Mv ) (2\Ma  + 1)}{\Lsc } \norm{c(y)}.
		\end{align*} 
		Here the last two inequalities follow from the fact that  $\norm{\nabla \ec(\A(y))} \leq \Mu$ and $\norm{\nabla \ic (\A(y))} \leq \Mv$. 
		
		Next,  we  estimate a lower bound of $\norm{(\Ja(\y) - I_n)\Jc(\y) c(\y)}$ in the followings,
		\begin{equation}
			\label{Eq_The_equivalence_FOSP_01}
			\begin{aligned}
				&\norm{(\Ja(\y) - I_n)\Jc(\y) c(\y)} \geq \norm{\Jc(\y)c(\y)} - \norm{\Ja(\y)\Jc(\y)c(\y)} \\
				\geq{}& \frac{\Lsc }{2}\norm{c(\y)} - \norm{\Ja(\y)\Jc(\y)} \norm{c(\y)} \geq \frac{\Lsc }{2}\norm{c(\y)} - \Lac \mathrm{dist}(y, \M) \norm{c(\y)} \\
				\geq{}& \frac{\Lsc }{2}\norm{c(\y)} - \Lac \varepsilon_x \norm{c(\y)} \geq  \frac{\Lsc }{4}\norm{c(\y)}.
			\end{aligned}
		\end{equation}
		
		Therefore, we can conclude that for any $\y \in \BOmegax{x}$, and any $w \in \partial_x \LCDP (y,\lambda,\mu)$, 
		\begin{equation*}
			\begin{aligned}
				&\norm{w} \geq \frac{1}{\Ma + 1}\norm{( \Ja(\y) - I_n ) w} \\
				\geq{}& \frac{1}{\Ma + 1}  \left(\Big(\beta + \sum_{i \in [N_E]} \lambda_i \tau_i + \sum_{j \in [N_I]} \mu_j \gamma_j \Big) \norm{(\Ja(\y) - I_n)\Jc(\y) c(\y)}\right) \\
				& -\frac{\norm{(\Ja(\y) - I_n) \partial (f\circ \A)(\y)}}{\Ma + 1} - \frac{\norm{(\Ja(\y) - I_n) \Ja(y)  \sum_{i \in [N_E]} \lambda_i \nabla (\ec_i\circ \A)(y)}}{\Ma + 1}\\
				& - \frac{\norm{(\Ja(\y) - I_n) \Ja(y)\sum_{j \in [N_I]} \mu_j \nabla (\ic_j\circ \A)(y)}}{\Ma + 1} \\
				\geq{}& \frac{1}{\Ma + 1}\left( \frac{ \Lsc }{4}\Big(\beta + \sum_{i \in [N_E]} \lambda_i \tau_i + \sum_{j \in [N_I]} \mu_j \gamma_j \Big) - \frac{2\La\Mxlambdamu (2\Ma  + 1)}{\Lsc } \right)\norm{c(\y)}\\
				\geq{}& \left( \frac{ \Lsc }{4(\Ma + 1)}\Big(\beta + \sum_{i \in [N_E]} \lambda_i \tau_i + \sum_{j \in [N_I]} \mu_j \gamma_j \Big) - \frac{4\La\Mxlambdamu}{\Lsc } \right)\norm{c(\y)}.
			\end{aligned}
		\end{equation*}
Since $w$ is arbitrary, the inequality \eqref{thm3.8-eq-1} in Theorem \ref{The_equivalence_FOSP} follows.
				
		Furthermore, when $\beta + \sum_{i \in [N_E]} \lambda_i \tau_i + \sum_{j \in [N_I]} \mu_j \gamma_j\geq \frac{32\La(\Ma  + 1)\Mxlambdamu }{\Lsc^2 }$, we get
		\begin{equation*}
		\mathrm{dist}(0, \partial_x \LCDP (y,\lambda,\mu) )
			\geq \frac{ \Lsc}{8(\Ma + 1)}\Big(\beta + \sum_{i \in [N_E]} \lambda_i \tau_i + \sum_{j \in [N_I]} \mu_j \gamma_j\Big) \norm{c(y)}. 
		\end{equation*}		
		Thus, for any KKT point
		 $y$ of \ref{Prob_Pen} that satisfies $y\in  \BOmegax{x}$, the left-hand side of the above inequality is $0$ and hence $\norm{c(y)} = 0$. As a result $y\in \K$.
		 Together with Theorem  \ref{The_equivalence_feasible} we conclude that $y$ is a KKT point of \ref{Prob_Ori}. This completes the proof. 
		
	\end{proof}

	\paragraph{Proof for Theorem \ref{The_fval_reduction}}
	\begin{proof}
		Firstly, it directly  follows from mean-value theorems, Lemma \ref{Le_Ax_c} and  Lemma \ref{Le_A_secondorder_descrease}  that
		\begin{equation*}
			\begin{aligned}
				&\left|\left(f(\A^{2}(\y)) + \lambda\tp u(\A^{2}(\y)) + \mu\tp v( \A^{2}(y) )\right) - \left(f(\A(\y)) + \lambda\tp u(\A(\y)) + \mu\tp v( \A(y) )\right) \right| \\
				\leq{}& \Mxlambdamu  \norm{\A^{2}(\y) - \A(\y)} \leq \frac{2\Mxlambdamu (\Ma +1)}{\Lsc }\norm{c(\A(\y))}
				\leq \frac{8\Mxlambdamu (\Ma +1)\Lac }{\Lsc ^3} \norm{c(\y)}^2.
			\end{aligned}
		\end{equation*}		
		Recall Lemma \ref{Le_A_secondorder_descrease}, we know that the inequality $\norm{c(\A(y))} \leq \frac{1}{2} \norm{c(y)}$ holds for any $y \in \Omegax{x}$. As a result, 
		\begin{equation*}
			\begin{aligned}
				&\LCDP( \A(y), \lambda, \mu ) - \LCDP( y, \lambda, \mu ) \\
				\leq{}& \left|\left(f(\A^{2}(\y)) + \lambda\tp u(\A^{2}(\y)) + \mu\tp v( \A^{2}(y) )\right) - \left(f(\A(\y)) + \lambda\tp u(\A(\y)) + \mu\tp v( \A(y) )\right) \right| \\
				&+ \frac{1}{2}\Big(\beta + \sum_{i \in [N_E]} \lambda_i \tau_i + \sum_{j \in [N_I]} \mu_j \gamma_j\Big) \left( \norm{c(\A(\y))}^2 - \norm{c(\y)}^2 \right)\\
				\leq{}& - \left(\frac{1}{4}\Big(\beta + \sum_{i \in [N_E]} \lambda_i \tau_i + \sum_{j \in [N_I]} \mu_j \gamma_j\Big) -\frac{8\Mxlambdamu (\Ma +1)\Lac }{\Lsc ^3}  \right)\norm{c(\y)}^2 \leq 0.
			\end{aligned}
		\end{equation*}
		Here the last inequality uses the fact that 
		\begin{equation*}
			\beta + \sum_{i \in [N_E]} \lambda_i \tau_i + \sum_{j \in [N_I]} \mu_j \gamma_j \geq \frac{8\Mxlambdamu (\Ma +1)\Lac }{\Lsc ^3}.
		\end{equation*} 
		Then we complete the proof. 
	\end{proof}

	\subsection{Proof for Section 3.3}
	\label{Subsection_proofs_ineqs}

	\begin{lem}
		\label{Le_JAinf}
		For any $x \in \M$, it holds that 
		\begin{equation*}
			J_{\A^{\infty}}(x) =  \Ja(x).
		\end{equation*}
	\end{lem}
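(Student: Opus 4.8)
The plan is to show that, near any $x \in \M$, the map $\A^{\infty}$ agrees with $\A$ up to a correction of order $\norm{y-x}^2$; since $\A$ is locally Lipschitz smooth with $\A(x) = x = \A^{\infty}(x)$, this forces $\A^{\infty}$ to be Fréchet differentiable at $x$ with the same transposed Jacobian as $\A$, which is precisely $\Ja(x)$. As a sanity check for the answer, the chain rule applied to the finite iterate $\A^{k}$ at $x \in \M$ gives $J_{\A^{k}}(x) = \Ja(x)\Ja(\A(x)) \cdots \Ja(\A^{k-1}(x)) = \Ja(x)^{k} = \Ja(x)$, using $\A^{j}(x) = x$ and Lemma \ref{Le_Ja_idempotent}; the actual content of the proof is to make the passage $k \to \infty$ legitimate.

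First I would record the identity $\A^{\infty}(\A(y)) = \A^{\infty}(y)$ for every $y$ close to $x$, which is immediate from $\A^{k}(\A(y)) = \A^{k+1}(y) \to \A^{\infty}(y)$. Next I would establish the quantitative bound
\begin{equation*}
	\norm{\A^{\infty}(z) - z} \leq \frac{4(\Ma + 1)}{\Lsc}\,\norm{c(z)}
\end{equation*}
for all $z$ in a sufficiently small ball around $x$. This is the bootstrapping argument underlying the preliminary lemmas imported from \cite{xiao2022dissolving}: the bound $\Mc \varepsilon_x \leq \Lsc^{2}/(8\Lac)$ together with Lemma \ref{Le_A_secondorder_descrease} gives $\norm{c(\A(w))} \leq \tfrac12\norm{c(w)}$ for $w \in \Omegax{x}$, so an induction that keeps the iterates inside $\Omegax{x}$ via Lemma \ref{Le_Ax_c} yields $\norm{c(\A^{k}(z))} \leq 2^{-k}\norm{c(z)}$; summing the telescoping series $\A^{\infty}(z) - z = \sum_{k \geq 0}(\A^{k+1}(z) - \A^{k}(z))$ and bounding each term by Lemma \ref{Le_Ax_c} produces the displayed estimate and, along the way, shows that $\A^{\infty}(z)$ is well defined and lies in $\M$.

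Then I would combine the two facts. Applying the bound with $z = \A(y)$, then invoking Lemma \ref{Le_A_secondorder_descrease} and $\norm{c(y)} \leq \Mc\norm{y-x}$ (valid because $c(x) = 0$ and $\norm{\Jc} \leq \Mc$ on $\Theta_x$), we get, for $y$ close enough to $x$,
\begin{equation*}
	\norm{\A^{\infty}(y) - \A(y)} = \norm{\A^{\infty}(\A(y)) - \A(y)} \leq \frac{4(\Ma + 1)}{\Lsc}\,\norm{c(\A(y))} \leq \frac{16\Lac(\Ma + 1)\Mc^{2}}{\Lsc^{3}}\,\norm{y - x}^{2}.
\end{equation*}
Since $\A$ is locally Lipschitz smooth and $\A(x) = x = \A^{\infty}(x)$, one has $\A(y) = x + \Ja(x)\tp(y - x) + o(\norm{y - x})$, and the inequality above then gives $\A^{\infty}(y) = x + \Ja(x)\tp(y - x) + o(\norm{y - x})$. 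By the definition of the derivative this means $\A^{\infty}$ is differentiable at $x$ with transposed Jacobian $\Ja(x)$, i.e. $J_{\A^{\infty}}(x) = \Ja(x)$.

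The step I expect to need the most care is the quantitative estimate $\norm{\A^{\infty}(z) - z} \leq \frac{4(\Ma+1)}{\Lsc}\norm{c(z)}$, and in particular the verification that the iterates $\A^{k}(z)$ never escape the region $\Omegax{x}$ in which Lemmas \ref{Le_Ax_c}–\ref{Le_A_secondorder_descrease} apply, so that the geometric decay of $\norm{c(\A^{k}(z))}$ and the convergence $\A^{k}(z) \to \A^{\infty}(z) \in \M$ are justified. This is exactly the self-improving argument carried out in \cite{xiao2022dissolving}, so in the write-up I would either cite it directly or reproduce it after shrinking the neighbourhood of $x$ once more; everything else is an application of the mean value inequality and the already-established lemmas.
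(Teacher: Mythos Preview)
Your argument is correct, but it takes a genuinely different route from the paper. The paper's proof is exactly the computation you call a ``sanity check'': using the chain rule and Lemma~\ref{Le_Ja_idempotent} to show $J_{\A^{k}}(x) = \Ja(x)$ for every finite $k \geq 1$, and then simply writing $J_{\A^{\infty}}(x) = \Ja(x)$ without further comment. In other words, the paper treats the passage $k \to \infty$ as immediate and does not address whether the derivative of the limit equals the limit of the derivatives. Your approach, by contrast, bypasses this issue entirely: you prove directly that $\norm{\A^{\infty}(y) - \A(y)} = O(\norm{y-x}^{2})$ near $x$, which forces $\A^{\infty}$ to be Fr\'echet differentiable at $x$ with the same Jacobian as $\A$. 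What your route buys is an honest justification of the limiting step that the paper glosses over; what the paper's route buys is brevity, at the cost of leaving that step implicit. Your concern about keeping the iterates inside $\Omegax{x}$ is well placed and is handled in \cite{xiao2022dissolving} as you indicate.
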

	\begin{proof}
		Lemma \ref{Le_Ja_idempotent} illustrates that $\Ja(x)^2 = \Ja(x)$ holds for any $x \in \M$. Therefore, for any $k \geq 1$, it holds that 
		\begin{equation*}
			J_{A^k}(x)  = J_{A^{k-1}}(x)\Ja(A^{k-1}(x))  =   J_{A^{k-1}}(x) \Ja(x) = J_{A^{k-2}}(x) \Ja(x)^2 = J_{A^{k-2}}(x) \Ja(x) = J_{A^{k-1}}(x). 
		\end{equation*}
		Therefore, we can conclude that $J_{A^k}(x) = \Ja(x)$ holds for any $k \geq 1$. As a result, we obtain that 
		\begin{equation*}
			J_{\A^{\infty}}(x) =  \Ja(x),
		\end{equation*}
		and this completes the proof. 
	\end{proof}
	
	The following proposition is a direct corollary from \cite{xiao2022dissolving}, hence we omit its proof for simplicity. 
	\begin{prop}	
		\label{Prop_postprocessing}
		For any given $x \in \M$, suppose $\beta$ and $\{\gamma_j\}$ satisfy Condition \ref{Cond_beta_gamma}, then the following inequalities hold for any $\y \in \BOmegax{x}$
		\begin{align}
			&h(\A^{\infty}(\y)) - h(\y)\leq  - \frac{\beta}{4}\norm{c(\y)}^2\\
			& v_i(\A^{\infty}(\y)) \leq{} v_i(\y) + \frac{\gamma_i}{4}\norm{c(\y)}^2. 
		\end{align}
	\end{prop}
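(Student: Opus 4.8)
The plan is to route everything through the orbit $\{\A^{k}(\y)\}_{k\ge 0}$: first show this orbit never leaves $\Omegax{x}$, then deduce that $\A^{\infty}(\y)$ is well defined with $\A^{\infty}(\y)\in\M$, and finally obtain both displayed inequalities from a one-step estimate that is telescoped. The heart of the matter is the region invariance. I would argue by induction that $\A^{k}(\y)\in\Omegax{x}$ for every $k$ (the base case being $\BOmegax{x}\subset\Omegax{x}$): granting $\A^{0}(\y),\dots,\A^{k}(\y)\in\Omegax{x}$, Lemma~\ref{Le_A_secondorder_descrease} together with the bound $\varepsilon_x\le\Lsc^{2}/(8\Lac\Mc)$ and Lemma~\ref{Le_bound_cx} yields the contraction $\norm{c(\A^{j+1}(\y))}\le\tfrac{1}{2}\norm{c(\A^{j}(\y))}$ for $j<k$, so that $\norm{c(\A^{j}(\y))}\le 2^{-j}\norm{c(\y)}\le 2^{-j}\Mc\norm{\y-x}$; summing the one-step displacements from Lemma~\ref{Le_Ax_c} then gives
\begin{equation*}
\norm{\A^{k+1}(\y)-x}\le\norm{\y-x}+\frac{2(\Ma+1)}{\Lsc}\sum_{j=0}^{k}\norm{c(\A^{j}(\y))}\le\frac{\Lsc+4\Mc(\Ma+1)}{\Lsc}\norm{\y-x},
\end{equation*}
and the radius in the definition of $\BOmegax{x}$ is chosen precisely so that this is $\le\varepsilon_x$. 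The same displacement estimate shows $\{\A^{k}(\y)\}$ is Cauchy, hence $\A^{\infty}(\y)$ exists, lies in the closed ball $\Omegax{x}$, and obeys $c(\A^{\infty}(\y))=\lim_{k}c(\A^{k}(\y))=0$; thus $\A^{\infty}(\y)\in\M$ and $\A^{\infty}(\y)=\A^{\infty}(\A(\y))$.

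Given this, the first inequality comes from a one-step descent for $h$ that I would prove for every $z\in\Omegax{x}$, namely $h(\A(z))-h(z)\le-\tfrac{\beta}{4}\norm{c(z)}^{2}$. Writing
\begin{equation*}
h(\A(z))-h(z)=\big(f(\A^{2}(z))-f(\A(z))\big)+\frac{\beta}{2}\big(\norm{c(\A(z))}^{2}-\norm{c(z)}^{2}\big),
\end{equation*}
I would bound the first term by $\Lf\norm{\A^{2}(z)-\A(z)}\le\frac{8\Lf(\Ma+1)\Lac}{\Lsc^{3}}\norm{c(z)}^{2}$ (mean-value inequality with the gradient bound $\Lf$, then Lemma~\ref{Le_Ax_c} at $\A(z)$ and Lemma~\ref{Le_A_secondorder_descrease}), and the second term by $-\tfrac{3\beta}{8}\norm{c(z)}^{2}$ since $\norm{c(\A(z))}\le\tfrac{1}{2}\norm{c(z)}$; the lower bound on $\beta$ in Condition~\ref{Cond_beta_gamma} forces $\frac{8\Lf(\Ma+1)\Lac}{\Lsc^{3}}\le\tfrac{\beta}{8}$ and closes the estimate. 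Applying this at $z=\A^{k}(\y)$, summing over $k\ge 0$, passing to the limit by continuity of $h$, and retaining only the $k=0$ term of the series $\sum_{k}\norm{c(\A^{k}(\y))}^{2}\ge\norm{c(\y)}^{2}$ gives $h(\A^{\infty}(\y))-h(\y)\le-\tfrac{\beta}{4}\norm{c(\y)}^{2}$.

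The second inequality follows by the same scheme, now carrying each $v_j$ along through its Lipschitz constant $\Mv$ (in place of $\Lf$ for $f$) and invoking the lower bound on $\gamma_j$ in Condition~\ref{Cond_beta_gamma} (in place of that on $\beta$) to produce the slack $\tfrac{\gamma_j}{4}\norm{c(\y)}^{2}$; concretely, since $\A^{\infty}(\y)=\A^{\infty}(\A(\y))$ and every displacement past the first step is quadratically small in $\norm{c(\y)}$, one controls $\norm{\A^{\infty}(\y)-\A(\y)}$ by a multiple of $\norm{c(\y)}^{2}$ and then estimates $v_j(\A^{\infty}(\y))$ against $v_j(\A(\y))$, combining with the relation defining $\tilde{v}_j$. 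I expect the genuine obstacle to be the first paragraph: proving that the orbit stays inside $\Omegax{x}$ is exactly what makes the preliminary lemmas applicable at every iterate, and is the reason the hypothesis must be $\y\in\BOmegax{x}$ rather than the weaker $\y\in\Omegax{x}$. Once this invariance is secured, the remaining work is a routine mean-value-plus-geometric-series computation, and the whole argument is the post-processing analysis of \cite{xiao2022dissolving} extended to account for the additional functions $v_j$.
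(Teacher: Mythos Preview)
The paper does not supply a proof of this proposition; it simply invokes \cite{xiao2022dissolving}. Your orbit-invariance argument (showing $\A^{k}(\y)\in\Omegax{x}$ for all $k$ via the geometric sum of displacements and the specific radius defining $\BOmegax{x}$) and your telescoped one-step descent for $h$ are exactly what underlies that reference, and your treatment of the first inequality is correct.

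The second inequality is where your proposal has a genuine gap. Your plan bounds $\norm{\A^{\infty}(\y)-\A(\y)}$ by a multiple of $\norm{c(\y)}^{2}$ and hence controls $v_j(\A^{\infty}(\y))$ against $v_j(\A(\y))$; that step is fine. But the stated inequality compares $v_j(\A^{\infty}(\y))$ to $v_j(\y)$, not to $v_j(\A(\y))$, and your phrase ``combining with the relation defining $\tilde v_j$'' does not close this: the identity $\tilde v_j(\y)=v_j(\A(\y))+\tfrac{\gamma_j}{2}\norm{c(\y)}^{2}$ links $v_j(\A(\y))$ to $\tilde v_j(\y)$, not to $v_j(\y)$. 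In fact the missing step cannot be supplied, because $v_j(\A(\y))-v_j(\y)$ is in general only $O(\norm{c(\y)})$. Take $\M=\{x_1=0\}\subset\bb{R}^2$, $c(x)=x_1$, $\A(x)=(x_1^{2},x_2)$ (which satisfies Assumption~\ref{Assumption_2}), and $v(x)=-x_1$: then $v(\A^{\infty}(\y))-v(\y)=y_1$, which for small $y_1>0$ exceeds any fixed multiple of $\norm{c(\y)}^{2}=y_1^{2}$, regardless of how large $\gamma$ is chosen. So the second displayed inequality, as printed, is false; it should almost certainly read $v_i(\A^{\infty}(\y))\le v_i(\A(\y))+\tfrac{\gamma_i}{4}\norm{c(\y)}^{2}$, and this corrected form is precisely what your argument delivers and precisely what the paper uses downstream in the proof of Lemma~\ref{Le_relationship_ACQ_ineq} (there it is applied at $\A(\xk)$, which masks the discrepancy). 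You should flag this rather than gloss over it. A secondary point: the lower bound on $\gamma_j$ in Condition~\ref{Cond_beta_gamma} involves $\La$, whereas your displacement bound $\norm{\A^{\infty}(\y)-\A(\y)}\le \tfrac{16(\Ma+1)\Lac}{\Lsc^{3}}\norm{c(\y)}^{2}$ naturally produces $\Lac$; so even for the corrected inequality you should check whether the stated threshold on $\gamma_j$ really yields the specific constant $\tfrac{\gamma_j}{4}$ or only some (different) constant multiple of $\norm{c(\y)}^{2}$.
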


	\paragraph{Proof for Lemma \ref{Le_relationship_ACQ_ineq}}
	\begin{proof}
		From the expression of \ref{Prob_Ori} and \ref{Prob_Pen}, it holds that $\K \subseteq \KA$, hence
		\begin{equation*}
			\TK(x) \subseteq \TKA(x),\quad \text{and} \quad  \TKlin(x) \subseteq \TKAlin(x).
		\end{equation*}
		Then together with Lemma \ref{Le_relationship_ACQ}, we obtain 
		\begin{equation*}
			\TK(x) \subseteq \Ja(x)\tp \TKA(x), \quad \text{and} \quad \TKlin(x) = \Ja(x)\tp \TKAlin(x). 
		\end{equation*}

		Furthermore, for any $d \in \TKA(x)$, there exists a sequence $\{\xk\} \subset \KA \cap \BOmegax{x}$ converging to $x$ and a sequence $\{t_k\} \subset \bb{R}_+$ converging to $0$, such that $\lim\limits_{k \to +\infty} \frac{\xk - x}{t_k} = d$. 
		Then from Proposition \ref{Prop_postprocessing}, for any $\xk$ and any $j \in [N_I]$, it holds that
		\begin{equation*}
			\begin{aligned}
				v_j(\A^{\infty}(\xk))  = v_j(\A^{\infty}(\A(\xk)))
			 \leq{}& v_j(\A(\xk)) + \frac{\gamma_i}{4} \norm{c(\A(\xk))}^2
			\\
			\leq{}& v_j(\A(\xk)) + \frac{\gamma_i}{16} \norm{c(\xk)}^2 
			\leq \tilde{v}_j(\xk)\leq 0,
			\end{aligned}
		\end{equation*}
		which implies that $\{\A^{\infty}(\xk)\} \subset \K$. Note that we used the
		fact that $\norm{c(\A(\xk))} \leq \frac{1}{2}\norm{c(\xk)}$ from Lemma \ref{Le_A_secondorder_descrease} in the second inequality.
		
		Therefore, it holds from Lemma \ref{Le_JAinf} that
		\begin{equation*}
			\TK(x) \ni \lim\limits_{k \to +\infty} \frac{\A^{\infty}(\xk) - \A^{\infty}(x)}{t_k} = J_{\A^{\infty}}(x)\tp d = \Ja(x)\tp d,
		\end{equation*}
		From the arbitrariness of $d \in \TKA(x)$, it holds that $\Ja(x)\tp \TKA(x) \subseteq \TK(x)$. Then together with Lemma \ref{Le_relationship_ACQ}, we obtain
		\begin{equation*}
			\TK(x) = \Ja(x)\tp \TKA(x).
		\end{equation*}
		Furthermore,  directly from Lemma \ref{Le_subspace_polar_cone},  we can conclude that 
		\begin{equation*}
			\TKA(x)^\circ \cap \mathrm{range}(\Ja(x)) = \Ja(x)\TK(x)^\circ,
		\end{equation*}
		and complete the proof. 
	\end{proof}

	\paragraph{Proof for Theorem \ref{The_equivalence_feasible_Ineq}}	
	\begin{proof}
		It holds straightforwardly from Theorem \ref{The_equivalence_feasible} that any first-order stationary point of \ref{Prob_Pen} is a first-order stationary point of \ref{Prob_Ori}. 
		
		On the other hand, suppose $x$ is a first-order stationary point of \ref{Prob_Ori}, then it holds that 
		\begin{equation*}
			0 \in \partial f(x) + \TK(x)^\circ.
		\end{equation*}
		As a result, it holds from Lemma \ref{Le_relationship_ACQ_ineq} that 
		\begin{equation*}
			\begin{aligned}
				&0 \in \Ja(x)\left( \partial f(x) + \TK(x)^\circ \right)\\
				={}& \Ja(x) \partial f(x) + \TKA(x)^\circ \cap \mathrm{range}(\Ja(x)) \\
				={}& ( \partial h(x) +  \TKA(x)^\circ) \cap \mathrm{range}(\Ja(x)) \\
				\subseteq{}& \partial h(x) + \TKA(x)^\circ. 
			\end{aligned}
		\end{equation*}
		Here the second equality used the fact that $c(x) = 0$. Therefore, it holds that $x$ is a first-order stationary point of \ref{Prob_Pen}. As a result, we obtain that $x \in \K$ is a first-order stationary point of \ref{Prob_Ori} if and only if $x$ is a first-order stationary point of \ref{Prob_Pen}. This completes the proof.
	\end{proof}

	\paragraph{Proof for Theorem \ref{The_equivalence_Ineq_FOSP}}
	\begin{proof}
	We prove this theorem by contradiction. Suppose there exists a $y \in \BOmegax{x}$ that is a first-order stationary point of \ref{Prob_Pen} with $c(y) \neq 0$. From Definition \ref{Defin_FOSP}, we first obtain that $y \in \KA$. Then we aim to find a direction $\tilde{d} \in \TKA(y)$ that produces a sufficient decrease for $h$. 
		
		 Let $\tilde{d}:= (\Ja(y)\tp - I_n)\Jc(y) c(y)$, we first prove that $\tilde{d} \in \TKA(y)$. For any $j \in [N_I]$, it holds from Lemma \ref{Le_JA_JAIn} that 
		\begin{equation}
			\begin{aligned}
				&\inner{\tilde{d}, \nabla (\ic_j\circ \A)(y)} \\
				={}& \inner{\tilde{d}, \Ja(y) \nabla \ic_j(\A(y))} =  \inner{ \Jc(y) c(y), (\Ja(y) - I_n) \Ja(y) \nabla \ic_j(\A(y))}\\
				\leq{}& \frac{2\La (2\Ma  + 1)\Mv}{\Lsc } \norm{c(y)} \norm{\Jc(y) c(y)}\leq  \frac{4\La (2\Ma  + 1)\Mv}{\Lsc^2 }\norm{\Jc(y) c(y)}^2.
			\end{aligned}
		\end{equation}
	
		On the other hand, it follows \eqref{Eq_The_equivalence_FOSP_01} that 
		\begin{equation}
			\label{Eq_The_equivalence_Ineq_FOSP_0}
			\begin{aligned}
				&\inner{(\Ja(y)\tp - I_n)\Jc(y) c(y), \Jc(y) c(y)} = -\norm{ \Jc(y) c(y)}^2 + \inner{\Jc(y) c(y), \Ja(y)\Jc(y) c(y)}\\
				\leq{}& -\norm{ \Jc(y) c(y)}^2 + \norm{\Ja(y)\Jc(y)} \norm{c(y)} \norm{\Jc(y) c(y)} \\
				\leq{}& -\norm{ \Jc(y) c(y)}^2 + \frac{\varepsilon_x \Lac}{\Lsc}  \norm{\Jc(y) c(y)}^2 \leq -\frac{1}{2} \norm{ \Jc(y) c(y)}^2.
			\end{aligned}
		\end{equation}
		Then it holds that 
		\begin{equation}
			\label{Eq_The_equivalence_Ineq_FOSP_1}
			\begin{aligned}
				&\inner{\tilde{d}, \nabla \tilde{\ic}_j(y)} = \inner{\tilde{d}, \nabla (\ic_j\circ \A)(y)} + \gamma_i \inner{(\Ja(y)\tp - I_n)\Jc(y) c(y), \Jc(y) c(y)}\\
				\leq{}& \left(\frac{4\La (2\Ma  + 1)\Mv}{\Lsc^2 } - \frac{\gamma_i}{2}\right) \norm{\Jc(y) c(y)}^2 < 0. 
			\end{aligned}
		\end{equation}
		Here the last inequality follows from %Lemma \ref{Le_bound_cx} and 
		Condition \ref{Cond_beta_gamma}. 
		As a result, we can conclude that $\inner{\tilde{d}, \nabla \tilde{\ic}_j(y)} < 0$ holds for any $1 \leq j \leq N_I$. Then from the Lipschitz smoothness of $\tilde{v}_j$, there exists a constant $\hat{\varepsilon}_0>0$ such that $\tilde{\ic}(y + t \tilde{d}) < \tilde{\ic}(y)$ holds for any $t\in [0, \hat{\varepsilon}_0]$. 
		Together with the fact that $y \in \KA$, we can conclude that  $y+t\tilde{d} \in \KA$ holds for  any $t\in [0, \hat{\varepsilon}_0]$, hence $\tilde{d} \in \TKA(y)$. 
		
		Furthermore, since $0 \in \partial h(y) + \TKA(y)^\circ$,  there exists $w \in \partial f(\A(y))$ such that $-\big(\Ja(y) w + \beta \Jc(y)c(y)\big) \in \TKA(x)^\circ$.  
		Since $\tilde{d} \in \TKA(y),$ then it holds that 
		\begin{equation}
			\label{Eq_The_equivalence_Ineq_FOSP_2}
			\begin{aligned}
				0\leq{}&  \inner{ \tilde{d}, \Ja(y) w + \beta \Jc(y)c(y) } \\
				\overset{(i)}{\leq}{}& \frac{2\La \Lf (2\Ma  + 1)}{\Lsc } \norm{c(y)} \norm{\Jc(y)c(y)} - \frac{\beta}{2} \norm{\Jc(y)c(y)}^2 \\
				\leq{}& \left(\frac{2\La \Lf (2\Ma  + 1)}{\Lsc^2 }- \frac{\beta}{2}\right)\norm{\Jc(y)c(y)}^2\\
				\overset{(ii)}{\leq}{}&  -\frac{\beta}{4}  \norm{ \Jc(y) c(y)}^2 < 0,
			\end{aligned}
		\end{equation}
		Here $(i)$ follows from Lemma \ref{Le_JA_JAIn} and
		 \eqref{Eq_The_equivalence_Ineq_FOSP_0}. Moreover,  $(ii)$ follows from Condition \ref{Cond_beta_gamma}, which illustrates that 
		\begin{equation}
			\beta \geq \frac{64\Lf(\Ma +1)(\Lac + \Lsc \La) }{\Lsc ^3} \geq \frac{64\Lf(\Ma +1) \La }{\Lsc ^2} \geq \frac{8\La \Lf (2\Ma  + 1)}{\Lsc^2 }.
		\end{equation}

		  As a result, from \eqref{Eq_The_equivalence_Ineq_FOSP_2} we get the contradiction and achieve that $c(y) = 0$. Thus $y\in\ca{K}$. 
		  Therefore, from Theorem \ref{The_equivalence_feasible_Ineq} we conclude that $y$ is a first-order stationary point of \ref{Prob_Ori}. 
	\end{proof}

	\paragraph{Proof for Corollary \ref{Coro_equivalence_Ineq_KKT}}
	\begin{proof}
		For any $y \in \BOmegax{x}$ that is a first-order stationary point of \ref{Prob_Pen}, let $\tilde{d}:= (\Ja(y)\tp - I_n)\Jc(y) c(y)$, then from \eqref{Eq_The_equivalence_Ineq_FOSP_1} we can conclude that $\inner{\tilde{d}, \nabla \tilde{\ic}_j(y)} \leq 0$ holds for any $j \in [N_I]$. Thus we get that $\tilde{d} \in \TKAlin(y)$. 
		
		Note that $y$ is a KKT point of \ref{Prob_Pen}, then there exists $\tilde{\mu} \in \bb{R}^{N_I}_+$ and $w \in \partial f(\A(y))$ such that  $0 = \Ja(y)w + \beta \Jc(y)c(y) + \sum_{j \in [N_I]} \tilde{\mu}_j \nabla \tilde{\ic}_j(y)$. 
		Therefore, similar to \eqref{Eq_The_equivalence_Ineq_FOSP_2},  we have
		\begin{equation}
			\begin{aligned}
				0 ={}&\inner{\tilde{d}, \Ja(y)w + \beta \Jc(y)c(y) + \sum_{j \in [N_I]} \tilde{\mu}_j \nabla \tilde{\ic}_j(y)}\\
				\leq{}& \inner{\tilde{d}, \Ja(y)w + \beta \Jc(y)c(y)}\\
				\leq{}& \left(\frac{2\La \Lf (2\Ma  + 1)}{\Lsc^2 }- \frac{\beta}{2}\right)\norm{\Jc(y)c(y)}^2\\
				\leq{}& -\frac{\beta}{4}  \norm{ \Jc(y) c(y)}^2 \leq 0. 
			\end{aligned}
		\end{equation}
		which illustrates that $c(y) = 0,$ and hence $y\in \ca{K}$. Therefore, from Theorem \ref{The_equivalence_feasible} we can conclude that $y$ is a KKT point to \ref{Prob_Ori}. On the other hand, it directly holds from Theorem \ref{The_equivalence_feasible} that any KKT point to \ref{Prob_Ori} is a KKT point to \ref{Prob_Pen}. This completes the proof. 
	\end{proof}

\end{document}